\newcommand{\FFF}{\color{black}}
\newcommand{\PPP}{\color{black}}
\newcommand{\RRR}{\color{black}}
\newcommand{\BBB}{\color{black}}
\newcommand{\EEE}{\color{black}}
\newcommand{\UUU}{\color{black}}
\newcommand{\MMM}{\color{black}}
\newcommand{\GGG}{\color{black}}
\newcommand{\OOO}{\color{black}}
\newcommand{\ZZZ}{\color{black}}
\newcommand{\tone}{\boldsymbol t_1}
\newcommand{\ttwo}{\boldsymbol t_2}
\newcommand{\tthree}{\boldsymbol t_3}
\newcommand{\ti}{\boldsymbol t_i}
\newtheorem{theorem}{Theorem}[section]
\newtheorem{lemma}[theorem]{Lemma}
\newtheorem{proposition}[theorem]{Proposition}
\newtheorem{corollary}[theorem]{Corollary}
\theoremstyle{definition}
\newtheorem{definition}[theorem]{Definition}
\newtheorem{remark}[theorem]{Remark}
\newcommand{\Qz}{\mathbb{Q}}
\newcommand{\Rz}{\mathbb{R}}
\newcommand{\Nz}{\mathbb{N}}
\newcommand{\Zz}{\mathbb{Z}}
\newcommand{\measurerestr}{%
	\,\raisebox{-.127ex}{\reflectbox{\rotatebox[origin=br]{-90}{$\lnot$}}}\,%
}
\newcommand{\be}[1]{\begin{equation}\label{#1}}
\newcommand{\ee}{\end{equation}}
\begin{document}

\title[Microscopical justification of Solid-State Wetting and \FFF Dewetting \EEE]{Microscopical justification of Solid-State\\Wetting and Dewetting}

\author{Paolo Piovano}
\address[Paolo Piovano]{Department of Mathematics, University of Vienna,   Oskar-Morgenstern-Platz 1, A-1090 Vienna, Austria}
\email{paolo.piovano@univie.ac.at}

\author{Igor Vel\v{c}i\'c}
\address[Igor Vel\v{c}i\'c]{Faculty of Electrical Engineering and Computing, University of Zagreb, Unska 3, 10000 Zagreb, Croatia}
\email{igor.velcic@fer.hr}

\keywords{\FFF Island nucleation, wetting, dewetting, \EEE Winterbottom shape, discrete-to-continuum passage, $\Gamma$-convergence, atomistic models, surface energy, anisotropy, adhesion, capillarity problems, crystallization.\EEE}

\begin{abstract}
The continuum model related to the \FFF \emph{Winterbottom problem}, \EEE i.e., the problem of determining the equilibrium shape of crystalline drops resting on a substrate, is derived in dimension two by means of a rigorous discrete-to-continuum  passage by  $\Gamma$-convergence of atomistic models taking into consideration the atomic interactions of the drop particles both among themselves and with the fixed substrate atoms. As a byproduct of the analysis effective expressions for the drop surface anisotropy and the drop/substrate adhesion parameter  appearing in the  continuum model are characterized  in terms of the atomistic potentials, which are chosen of Heitmann-Radin sticky-disc type. Furthermore, a threshold condition only depending on such potentials is determined distinguishing the wetting regime, where discrete minimizers are \FFF explicitly \EEE characterized as configurations contained in \FFF a layer with a one-atom thickness, i.e., the wetting layer, \EEE  on the substrate,  from the dewetting regime. In the latter regime,  also in view of a proven conservation of mass in the limit as the number of atoms tends to infinity, proper scalings of \FFF the minimizers of the atomistic models \EEE converge (up to extracting a subsequence and performing translations on the substrate surface)  to a bounded minimizer of the Winterbottom continuum model \FFF satisfying the volume constraint. \EEE
 \end{abstract}

\subjclass[2010]{\PPP 49JXX, 82B24.\EEE} 
\maketitle

\pagestyle{myheadings}
%%%%%%%%%%%%%%%%%%%%%%%%%%%%%%%%%%%%%%%%%%%%%%%%%%

\section{Introduction}

\PPP
The problem of determining the equilibrium shape formed by \FFF crystalline drops \EEE resting upon a rigid substrate possibly of a different material is  \FFF long-standing in materials science and applied mathematics. %A \EEE phenomenological prediction \FFF of such shape in dependence on both the drop-surface anisotropy \EEE and  the drop/substrate adhesion  at the contact region \FFF was first provided for flat substrates by W.\ L.\ Winterbottom  in  \cite{Winterbottom} and referred to afterwards as the \emph{Winterbottom construction}  \EEE (see Figure \ref{fig: winterbottom}). \FFF Various morphologies may develop 
%ranging from the spreading of the drops in infinitely thick \emph{wetting layer} covering the substrate, which is used, e.g., for the design of film coatings, to the nucleation of  \emph{dewetted islands}  \cite{srolovitz2}, i.e., solid-state clusters of atoms leaving the substrate exposed among them, that find application, e.g., as catalysts for the growth of carbon or semiconductor nanowires and for sensor devices (see \cite{srolovitz1} and the reference therein). \EEE
\FFF The first phenomenological prediction of such shape  for flat substrates is due to W.\ L.\ Winterbottom, who in  \cite{Winterbottom} designed what  is now referred to as  the \emph{Winterbottom construction}  (see Figure \ref{fig: winterbottom}) to minimize the  drop surface energy in which both %depending both on
 the \emph{drop anisotropy} at the free surface and the \emph{drop wettability} at the contact region with the substrate were taken into account (see \eqref{winterbottomsurfaceenergy} below).
% in dependence on both the drop-surface anisotropy  and  the drop/substrate adhesion  at the contact region.
% In [37] W. L. Winterbottom phenomenologically predicts such shape for flat substrates by what is now referred to as the Winterbottom construction (see Figure 1) in dependence on both the drop-surface anisotropy and the drop/substrate adhesion at the contact region. 
%  A \EEE phenomenological prediction \FFF of such shape in dependence on both the drop-surface anisotropy \EEE and  the drop/substrate adhesion  at the contact region \FFF was first provided for flat substrates by W.\ L.\ Winterbottom  in  \cite{Winterbottom} and referred to afterwards as the \emph{Winterbottom construction}  \EEE (see Figure \ref{fig: winterbottom}). \FFF Various morphologies may develop 
  The interplay between the drop material properties of anisotropy and wettability %such drop material properties 
  can induce different morphologies ranging from the spreading of the drops in a infinitely thick \emph{wetting layer} covering the substrate, which is exploited, e.g.,   in the design of  film coatings, to the nucleation of  \emph{dewetted islands}, that are solid-state clusters of atoms leaving the substrate exposed among them, which find other applications, such as for sensor devices and as catalysts for the growth of carbon and semiconductor nanowires  \cite{srolovitz1, srolovitz2}. %} and the reference therein). \EEE  is used, e.g.,
  %used in sensor devices [6] and as catalysts for the growth of carbon or semiconductor nanowires
%A \EEE phenomenological prediction \FFF in dependence on both the drop-surface anisotropy \EEE and  the drop/substrate adhesion  at the contact region \FFF was firstly provided for flat substrates by W.\ L.\ Winterbottom  in  \cite{Winterbottom} and referred to afterwards as the \emph{Winterbottom construction}  \EEE (see Figure \ref{fig: winterbottom}). 
%The problem of determining the equilibrium shape formed by \FFF crystalline drops \EEE resting upon a rigid substrate possibly of a different material is  \FFF long-standing.\EEE  and found a phenomenological solution was tackled by W.\ L.\ Winterbottom, who provided in  \cite{Winterbottom} A phenomenological prediction, dependent both on  the  surface anisotropy of the drop and on the drop/substrate adhesion property at the contact region, of such shape was pro, which is nowadays referred to as the \emph{Winterbottom shape} (see Figure \ref{fig: winterbottom}).
% The  \emph{Winterbottom problem} consists in determining the equilibrium shape formed by crystal particles resting upon a solid substrate and takes his name from W.\ L.\ Winterbottom, who provided in \cite{Winterbottom} a phenomenological prediction of such shape, nowadays referred to as \emph{Winterbottom shape}, in dependance of both the surface anisotropy of the crystal particles and of their adhesion property with the substrate 

\FFF In this work we introduce a discrete setting dependent on the atomistic interactions of drop particles both among themselves and with the substrate particles,  %and distinguish the regime of parameters of the potential governing such atomistic interactions in the  associated with the wetting layer, denoted in the following as the \emph{wetting regime}
%, in reference to which we characterize  the regime associated with the wetting layer, and 
and we characterize  in terms of the parameters of the  potentials governing such atomistic interactions the regime associated with the wetting layer, referred in the following as the \emph{wetting regime}.  %of the drop particles among themselves and with the substrate particles, 
For the complementary parameter range, i.e., the \emph{dewetting regime}, we microscopically  justify the formation of solid-state dewetted  islands  %by distinguishing the two regimes of wetting and dewetting with respect to the atomistic potentials governing the interactions of the drop particles among themselves and with the substrate particles, and 
 by performing a rigorous discrete-to-continuum passage by means of showing the $\Gamma$-convergence of the atomistic energies to the  energy considered in   \cite{srolovitz1,srolovitz2} and by  W.\ L.\ Winterbottom in  \cite{Winterbottom}.   \EEE

%In this work we consider a discrete setting, which takes into account the atomistic interactions of the drop particles both among themselves and with the substrate particles, and we microscopically  justify  both the formation of infinitely thick coatings covering the substrate, namely the \emph{wetting layer}, and  the nucleation of solid-state clusters of atoms, denoted \emph{dewetted islands}, that leave the substrate exposed among them, by distinguishing the two regimes of wetting and dewetting with respect to the atomistic potentials governing the interactions of the drop particles among themselves and with the substrate particles, and by performing a rigorous discrete-to-continuum passage by means of $\Gamma$-convergence.

%olid-state dewetting is a ubiquitous phenomenon in thin film technology [1?5] which can either be deleterious, destabilizing a thin film structure, or advantageous, lead- ing to the controlled formation of an array of nanoscale particles, e.g., used in sensor devices [6] and as catalysts for the growth of carbon or semiconductor nanowires [7, 8]. Recently, solid-state dewetting has been attracting in- creased attention both because of interest in the underly- ing pattern formation physics and its potential application as an economical approach to obtain nanostructured sur- faces and nanodevices [9?18].

%The Winterbottom problem can be treated variationally as the minimum problem based on the a priori knowledge of 
In the continuum setting,  the \emph{Winterbottom problem}   in  \cite{Winterbottom} essentially consists in an optimization problem based on an \emph{a priori} knowledge of the \FFF surface anisotropy \EEE $\Gamma$ of the  resting crystalline drop with the surrounding vapor, and of the \emph{adhesivity} $\sigma$ related to the contact interface between the drop and the substrate. In the modern mathematical formulation in $\Rz^d$ for $d>1$ the energy associated to \FFF an admissible \EEE region $D\subset\Rz^d\setminus S$  occupied by \FFF the drop \EEE material, \FFF which is assumed to be a set of finite perimeter \EEE %also denoted in the following as \emph{crystalline drop} or \emph{film}, 
outside a fixed smooth substrate region $S\subset\Rz^d$, is given by 
\begin{equation}\label{winterbottomsurfaceenergy} 
 \mathcal{E}(D):=\int_{\partial^* D\setminus\partial S}\Gamma(\nu(\xi))\,  \mathrm{d}\mathcal{H}^{d-1}(\xi) +  \sigma \mathcal{H}^{1}(\partial^* D\cap\partial S),% \mathrm{d}\sigma_{\xi}
\end{equation}
where %$S$ is assumed to have a smooth boundary and  the admissible  regions $D$ are sets of finite perimeter for which the notation  is assumed t
 $\partial^* D$ is the reduced boundary of $D$,  $\nu$ is the exterior normal vector of $D$, and $\mathcal{H}^{d-1}$ the $(d-1)$-dimensional measure. The Winterbottom shape $W_{\Gamma,\sigma}$ introduced in  \cite{Winterbottom} is defined as depicted in Figure \ref{fig: winterbottom} by 
$$W_{\Gamma,\sigma}:=W_\Gamma\cap\{x\in\Rz^d\,:\, x_d\geq - \sigma\}$$
\PPP where $W_\Gamma$ is the \emph{Wulff shape}, i.e.,
$$
W_\Gamma:=\{x\in\Rz^d\,:\, x\cdot \nu\leq \Gamma(\nu) \text{ for every } \nu\in S^{d-1}\}. 
$$ 
 %\ZZZ (check the definition of the Winterbottom shape with the inclusion of the volume constrained) \PPP
The Wulff shape $W_\Gamma$ is named after G. Wulff, who provided in  \cite{Wulff01} its first phenomenological construction as  the equilibrium shape for a \FFF free-standing crystal with anisotropy $\Gamma$ in the space   (in the absence of a substrate or any other crystalline materials), and was afterwards in \cite{F,FM2}  rigorously proved to be the unique minimum of \eqref{winterbottomsurfaceenergy}  when  $S=\emptyset$ in the presence of a volume constraint  and after a proper scaling to adjust its volume  (see also  \cite{T,T2}).  %, $W:=\{x\in\Rz^d\,:\, x\cdot n\leq \psi(n)\quad\textrm{for every}\quad n\in S^{d-1}\}$.  
\EEE

The emergence of the Wulff and Winterbottom shapes have been already \FFF justified \EEE starting from discrete models in the context of \FFF statistical mechanics \EEE and the Ising model.   We refer to the review  \cite{DKS} (see also \cite{ISc,KP}) for  the 2-dimensional derivation of the Wulff shape in the scaling limit at low-temperature and  to \cite{Dodineau-etal,PV1,PV2} for the setting related to the Winterbottom shape. More recently, the microscopical justification of the Wulff shape in the context of atomistic models depending on Heitmann-Radin sticky-disc type potentials \cite{Heitmann-Radin80} has been addressed for $d=2$ and the triangular lattice  in \cite{Yeung-et-al12} by performing  a rigorous discrete-to-continuum analysis by means of $\Gamma$-convergence. %and staking into account  interatomic interactions   has been  addressed for $d=2$ and the triangular lattice  in \cite{Yeung-et-al12}. 
Subsequetly,  the deviation of discrete ground states in the triangular lattice from the asymptotic Wulff shape has been sharply quantified in \cite{Schmidt} by introducing the $n^{3/4}$ law (see also \cite{DPS1}), which  has been then extended to  the square lattice in \cite{MPS,MPS2}, to the hexagonal lattice for graphene nanoflakes  in   \cite{DPS2}, and to higher dimensions in  \cite{MPSS,MS}.

\begin{figure}
\includegraphics[width=0.49\textwidth]{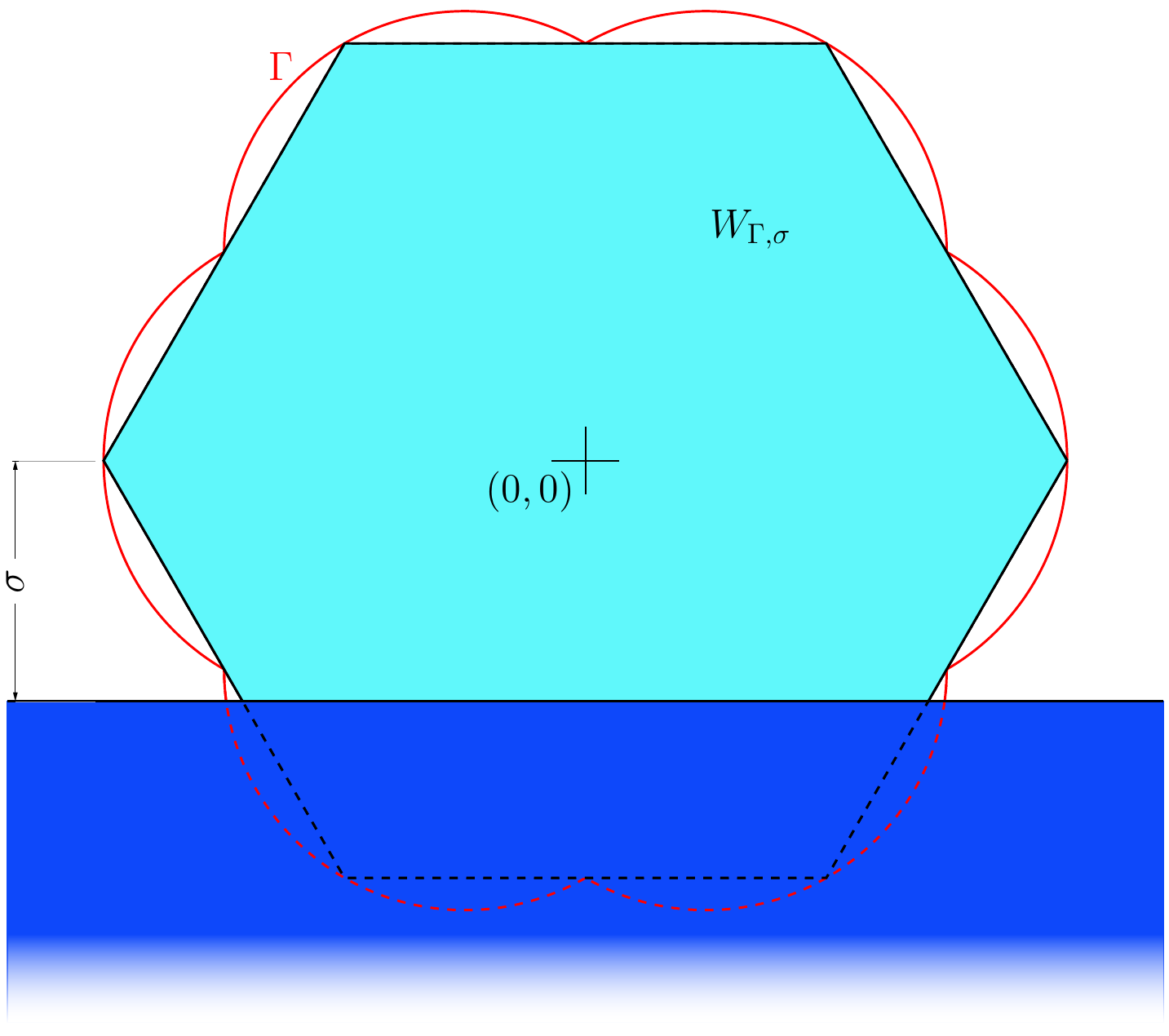}
\includegraphics[width=0.49\textwidth]{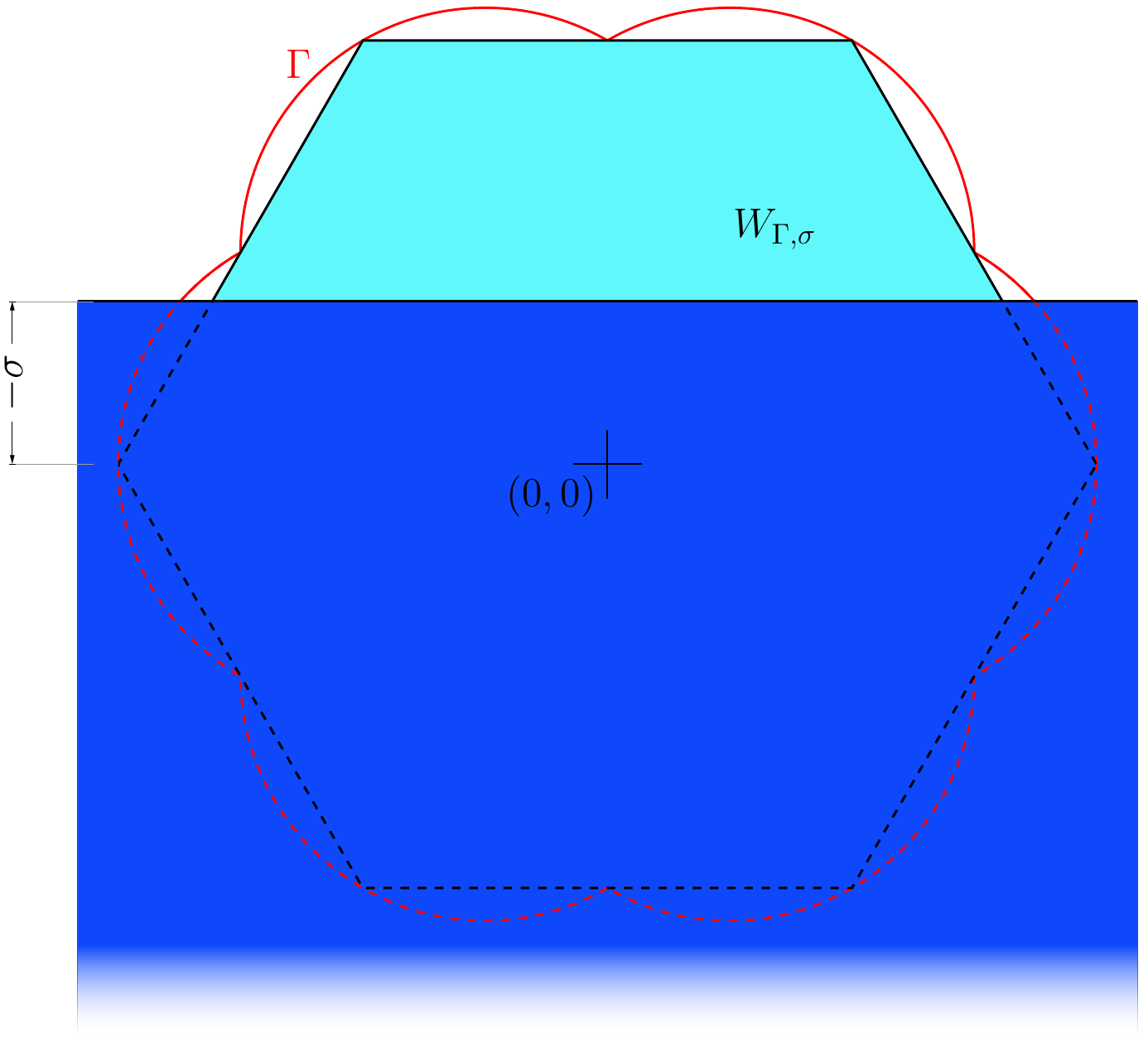}
\caption{\PPP Winterbottom construction for the minimizer of $ \mathcal{E}$, \PPP on the left for $\sigma>0$ and on the right for $\sigma<0$ \EEE (see  \cite{Winterbottom}). }
\label{fig: winterbottom}
\end{figure}

We intend here to generalize the analysis of  \cite{Yeung-et-al12} for $d=2$ to the situation of $S\neq\emptyset$ by taking into account at the discrete level also the atomic interactions of the particles of the crystalline drops with the particles of the substrate, which we allow to possibly belong to a different species of particles, and we suppose occupying all sites of a fixed reference lattice  $\mathcal{L}_S\subset S$.
%of  the atoms of the crystalline drops%, which are referred to as film atoms in the following,  
% with the atoms of the substrate, which are allowed to belong to a different species and are supposed to occupy all sites of a fixed reference lattice  $\mathcal{L}_S\subset S$. 
 Film atoms are instead let free to move in a lattice $\mathcal{L}_F$ chosen to be triangular and contained in $\mathbb{R}^2\setminus \overline{S}$, so that admissible configurations of crystalline drops  with $n\in\Nz$ film atoms are  $D_n:=\{x_1,\ldots,x_n\}\subset\mathcal{L}_F$  (see Figure \ref{fig:lattices}). %, which are allowed to move . %, so that the atoms of the two materials are assumed to remain separated by $\partial S$.
%This is needed as the introduction of the contribution of the atomistic interactions with substrate atoms strongly increase the difficulty of the problem, as crystallization results with two species of atoms in 2 dimensions are still extremely rare. Some recent results in this direction  are \dots. We refer to the family of admissible crystalline configuration of $n$ film atoms contained in $\mathcal{L}_F$ as $\mathcal{C}_F$. 
By adding the contribution $E_S:(\mathbb{R}^2\setminus \overline{S})^{n}\to\Rz\cup\{\infty\}$ to the energy of  \cite{Yeung-et-al12} to include atomic  interactions of film atoms with substrate atoms, the overall energy $V_n$ of an admissible configuration $D_n:=\{x_1,\ldots,x_n\}$ is given by 
$$
V_n(D_n)=V_n(x_1,\ldots,x_n):=E_F(D_n)+E_S(D_n),
$$
where $E_F:(\mathbb{R}^2\setminus \overline{S})^{n}\to\Rz\cup\{\infty\}$ represents the contribution of the atomic interactions among film atoms.  %$x_1,\ldots,x_n$, and $E_S:(\mathbb{R}^2\setminus S)^{n}\to\Rz\cup\{\infty\}$ the contribution of the atomic  interaction of the film atoms $x_1,\ldots,x_n$ with substrate atoms in $\mathcal{L}_S$. 
More precisely, $E_F$ and $E_S$ are defined by
$$
E_F(D_n)=E_F(x_1,\ldots,x_n):=\UUU\sum_{i\neq j} v_{FF}(|x_i-x_j|)
$$
and 
$$
E_S(D_n)=E_S(x_1,\dots,x_n):=\sum_{i=1}^n \sum_{s\in\mathcal{L}_S} v_\textrm{FS}(|x_i-s|),
$$
respectively, where  $v_{F\alpha}$ for $\alpha=F,S$ are Heitmann-Radin sticky-disc two-body potentials attaining their minimum values $-c_\alpha$ at $e_\alpha>0$, where $e_\alpha$ is the distance between nearest neighbors in $\mathcal{L}_\alpha$  (see Figure \ref{fig:lattices}). %\emph{two-body} interaction potentials of Lennard-Jones type. 
%We refer to the family of admissible crystalline configuration of $n$ film atoms contained in $\mathcal{L}_F$ as $\mathcal{C}_F$. 

\FFF We recall  that even with Heitmann-Radin potentials the crystallization of the minimizers of $V_n$ has been shown so far only in the case with $S=\emptyset$ in  \cite{Heitmann-Radin80} by showing that the minimizers of $E_F$ are subset of a triangular lattice.  
The  rigidity assumption of \EEE  prescribing reference lattices $\mathcal{L}_F$ and $\mathcal{L}_S$, \FFF besides imposing the \emph{non-interpenetration} for the film and substrate species of  atoms, which  remain separated by $\partial S$,   also entails that \EEE the elastic energy associated to the mismatch between the optimal crystalline lattices of the two materials  of the drop and the substrate at equilibrium 
\FFF is supposed to be \EEE all released by means of \PPP the \EEE \emph{periodic dislocations} \PPP of  the global reference lattice $\mathcal{L}:=\mathcal{L}_F\cup\mathcal{L}_S$  prescribed \EEE at the %{\color{pink} DISLOCATIONS?}
  film-substrate interface \FFF $\partial S$. \EEE 
%Important consequences of prescribing reference lattices $\mathcal{L}_F$ and $\mathcal{L}_S$ is that \FFF under this rigidity assumption the two species of atoms remain separated by $\partial S$ and that \EEE the elastic energy associated to the mismatch between the optimal crystalline lattices of the two materials  of the drop and the substrate at equilibrium \FFF is supposed to be \EEE all released by means of \PPP the \EEE \emph{periodic dislocations} of  the global reference lattice $\mathcal{L}:=\mathcal{L}_F\cup\mathcal{L}_S$ \EEE prescribed at the %{\color{pink} DISLOCATIONS?}
 % film-substrate interface. 
 A study in which the complementary situation where elastic deformations of a homogeneous reference lattice $\mathcal{L}$ without dislocations between the film and the substrate are considered, is available in \cite{KrP}, where the linear-elastic models for epitaxially-strained thin films introduced in \cite{DP1,DP2,FFLM2, S2,spencer1997equilibrium} are derived from nonlinear elastic atomistic energies. 
 
  In our setting due to the periodic dislocations created at the interface between $\mathcal{L}_F$ and $\mathcal{L}_S$ the substrate interactions included in  $E_S$ \FFF are in general non-constant  (if not when $e_F$ is a multiple of $e_S$) and \EEE may result in \emph{periodic oscillations} between \FFF null and  negative  contributions  \EEE to the overall energy, referred to  in the following  as \emph{periodic adhesion deficit}. \FFF The \EEE presence of   
such oscillations \FFF substantiate \EEE the employment of \emph{homogenization techniques} for periodic structures \FFF (see \cite{AlbDes}  for the continuum setting), \EEE which represents one of the difference with the analysis carried out in \cite{Yeung-et-al12}. \FFF It then turns out though that \EEE the homogenized limit \FFF actually \EEE coincide  with the average \FFF in our setting  (and in \cite{CM} in the continuum setting). \EEE 
% such oscillations with periodic \FFF deficit substantiate \EEE the employment of \emph{homogenization techniques} for periodic structures \cite{AlbDes}, which represents one of the difference with the analysis carried out in \cite{Yeung-et-al12}, even if here the homogenized limit turns out to be the average (see \cite{CM} for the continuum setting). %results in
% \ZZZ (check if reference \cite{Cherednichenko-etal} works well) {\color{pink} I would remove that reference} \PPP
  %Sébastien Guenneau S.}, Richard Craster R.}, Tryfon Antonakakis T.}, Kirill Cherednichenko K.}, Shane Cooper S.}.Homogenization Techniques for Periodic Structures. E. Popov. Gratings: Theory and Numeric Applications, AMU (PUP), pp.11.1-11.31, 2012, 978-2-8539-9860-4. ffhal-00785718f

Moreover,  the \FFF periodic adhesion deficit  at the drop/substrate \EEE region \FFF induces  \EEE a \emph{lack of compactness} for (the properly scaled) energy-equi-bounded sequences  (even up to uniform translations), which is not treatable with only adopting \FFF local arguments \EEE at the substrate surface similar  to the one \FFF employed in \EEE \cite{Yeung-et-al12}.  In order to balance up the deficit we subdivide drop configurations  in    \emph{strips vertical to the substrate} so that enough boundary particles not adhering with the substrate (and so without deficit) are counted. Then, summing out all the strips allows to determine a \emph{global lower bound} to the overall surface contribution and to recover compactness in a proper subclass of admissible configurations, i.e., \emph{almost-connected configurations} (see Section \ref{sec:almost_connected}), that are %connected configurations or 
 configurations union of connected components positioned at ``substrate-bond'' distance. Such limitation is then overcome by means of ensuring that mass does not escape on the infinite  substrate surface. 
% {\color{pink} The drawback of our analysis consists in the fact that we work under rigidity assumption in the sence that we assume that film configuration is always on a fixed lattice. It is well known that in the case without substrate minimizers of Heitmann-Radin sticky disk potential (in $\Rz^2$) always lie on a fixed triangular lattice (see \cite{Heitmann-Radin80}). Such a result, however, does not exist in higher dimension. In the interaction area we assume that the film atoms can not go below fixed horizontal line.  } 

 Another reason for the lack of  compactness \FFF with \EEE substrate interactions is the possibility for minimizing drop configurations to spread out on the infinite substrate surface forming an infinitesimal wetting layer, which for $E_S\not\equiv0$ can be actually favored. Therefore, a peculiar aspect of our analysis resides in distinguishing such \emph{wetting regime} from  the \emph{dewetting regime}. %in which instead it is more convenient for film atoms to create isolated clusters, namely \emph{film islands}.  
 More precisely, we characterize a \emph{dewetting threshold} \FFF in terms of \EEE the interatomic potentials $v_{FF}$ and $v_{FS}$, \FFF namely
 \begin{equation}\label{intro_dewetting_condition}
\begin{cases}
c_{S}< 4c_{F} & \text{if $e_F$ is a multiple of $e_S$,}\\
c_{S}< 6c_{F} & \text{otherwise,}
\end{cases}
\end{equation} 
under which the emergence of the minimizers of \eqref{winterbottomsurfaceenergy} with full $\Rz^2$-Lebesgue measure is shown. \EEE

\PPP The results of the paper are threefold (see Section \ref{sec: main_results}): The first result, Theorem \ref{wetting_theorem}, is a crystallization result for wetting configurations achieved by induction arguments in which the dewetting-threshold condition \FFF  \eqref{intro_dewetting_condition} \EEE
 is singled out by treating separately the situation of constant and non-constant substrate contributions\FFF. \EEE %, namely %where different conditions are found. 
%where we cconsisting in singling out a condition depending only on the interatomic potentials and characterizing   the \emph{wetting regime}, that is 
%\begin{equation}\label{intro_dewetting_condition}
%\begin{cases}
%c_{S}< 4c_{F} & \text{if $e_F$ is a multiple of $e_S$,}\\
%c_{S}< 6c_{F} & \text{otherwise.}
%\end{cases}
%\end{equation}
%where $q$ relates to the proportion between the minimum points of the interatomic potentials $v_{FF}$ and $v_{FS}$ (see \eqref{eSratio}).
%\eqref{wetting_condition_total}, which  depends only on $c_F$ and $c_S$ and characterizes  the \emph{wetting regime}. 
In this regard notice that the characterization of the dewetting regime coming from  continuum theories (see, e.g., \cite{BA}) does not represent in general a good prediction for the discrete setting due to \FFF the deficit averaging  effects taking \EEE place in the passage from discrete to continuum. More precisely, as described in \cite{BA} (with the extra presence of a gravity-term  perturbation of $\mathcal{E}$) the condition 
\begin{equation}\label{dewetting_continuum}
\sigma>-\Gamma(\nu_S) %,\Gamma(\nu_S))
\end{equation}
is the natural requirement in the continuum ``ensuring that it is not energetically preferred for minimizers to spread out into an infinitesimally thin sheet''. However, condition \eqref{dewetting_continuum} coincides with the dewetting-threshold condition of the discrete setting only when $e_F$ is a multiple of $e_S$ (see \eqref{intro_sigma} and \eqref{intro_gamma} below), being otherwise the latter condition more restrictive. %Theorem \ref{wetting_theorem} shows that, if $q\neq 1$, then the discrete dewetting condition needs to be more restrictive than the continuum analogous \eqref{dewetting_continuum}. 

The second result, Theorem \ref{connectness},  provides a \emph{conservation of mass}  for the solutions of the discrete minimum problems
\begin{equation}\label{discrete_minimum}
\min_{D_n\subset\mathcal{L}_F}{V_n(D_n)}\
\end{equation}
as the number $n$ of atoms tends to infinity, which is crucial to overcome the lack of compactness outside the class of almost-connected sequences of energy-equibounded minimizers. % as, in  principle, loss of mass could happen in the limit.  
In particular, it consists in proving that it is enough to select a connected component  among those with  largest cardinality for each solution of \eqref{discrete_minimum}. %the minimizers of $V_n$ among all crystalline configurations with $n$ film atoms as  $n$ tends to infinity, that allows to select prove that by considering a sequence of connected components (in the bonding graph among first neighbors of drop configurations minimizing $V_n$) with largest cardinality we do not lose any mass in the limit. 
 This is achieved by  proving compactness   for  almost-connected energy minimizers  and then by defining a proper transformation $\mathcal{T}$ of configurations (based on iterated translations of connected components as detailed in Definition \ref{transformation}), which always allows to pass to an almost-connected sequence of minimizers.

The last result, Theorem \ref{thm:convergence_minimizers},  relates to the convergence  of the minimizers of \eqref{discrete_minimum}  as $n\to\infty$  to a minimizer of \eqref{winterbottomsurfaceenergy} in the family of crystalline-drop regions 
$$
\mathcal{D}_\rho:=\{D\subset\Rz^2\setminus S\,:\, \text{set   of finite  perimeter, bounded and such that  $|D|=1/\rho$}\}, 
$$
whose existence follows also from the proof, 
%given by  set   of finite  perimeter, bounded and such that  $|D|=1/\rho$, where $\rho$ is the atom density  in $\mathcal{L}_F$ per unit area.  
where $\rho$ is the atom density  in $\mathcal{L}_F$ per unit area.
 % , with 
%where
%	\begin{align*}
%\mathcal{M}_W:=\bigg\{\mu\in\mathcal{M}(\Rz^2)\ :\ \text{$\exists$ $D\subset\Rz^2\setminus S$  set of finite  perimeter,} & \text{ bounded, with }  |D|=\frac{1}{\rho},\\
%	& \hspace{5ex} \text{ and such that $\mu=\rho\chi_D$}\bigg\}, 
%		\end{align*}
%	where 
%$\rho:=2/\sqrt{3}$ that is related to choice of the triangular lattice for $\mathcal{L}_F$, as $\rho$ is the density of atoms per unit volume of such lattice, 
Such convergence is obtained (up to extracting a subsequence and performing horizontal translations on the substrate $S$) as a direct consequence of the conservation of mass  provided by  Theorem \ref{connectness} and of a $\Gamma$-convergence result for properly defined versions of $V_n$ and $\mathcal{E}$ in the space $\mathcal{M}(\Rz^2)$ of Radon measures  on $\Rz^2$ % , i.e., $\mathcal{M}(\Rz^2)$, 
with respect to the weak* convergence of measures  as the number $n$ of  film atoms tends to infinity. 

More precisely, we consider the one-to-one correspondence between drop configurations $D_n\subset\mathcal{L}_F$ and  their associated  empirical measures  $\mu_{D_n}\in\mathcal{M}(\Rz^2)$ (see definition at \eqref{empiricalmeasures}), introduce an energy $I_n$ defined in $\mathcal{M}(\Rz^2)$ such that
$$
I_n(\mu_{D_n})=V_n(D_n), 
$$
%xpress the energy $V_n$ in terms of the empirical measures $\mu_{D_n}$ associated to $D_n$, i.e., we introduce an energy $I_n$ define on Radon measure such that % \eqref \mu_{D_n}:=\frac{1}{n} \sum_{i=1}^n \delta_{\frac{x_i}{\sqrt n}}, 
 and prove that the $\Gamma$-convergence of proper scalings  $E_n$ of  $I_n$, namely 
 $$
 E_n:=n^{-1/2}(I_n+6c_F n),
 $$
with respect to the weak* convergence of measures,  to a functional $I_\infty$ defined in such a way that 
 $$
 I_\infty\left(\rho\chi_D\right)=\mathcal{E}(D),
 $$
 for every sets of finite perimeter $D\subset\Rz^2\setminus S$ with $|D|=1/\rho$ and for specific \emph{effective expressions} of the surface tension $\Gamma$ and of the adhesivity $\sigma$ appearing in the  definition \eqref{winterbottomsurfaceenergy} of $\mathcal{E}$  in terms of the interatomic potentials $v_{FF}$ and $v_{FS}$. %, by properly identifying the surface tension $\Gamma$ and the adhesion $\sigma$.
%  coinciding with $\mathcasl{E}$ coinciding, when finite, i.e., on measures 
%$$\mu=\frac{2}{\sqrt{3}}\chi_D$$ 
%for sets  of finite perimeter $D\subset\mathbf{R}^2_{+}$ such that $|D|=\frac{\sqrt{3}}{2}$, with the Winterbottom functional $\mathcal{E}$ for proper definitions in \eqref{winterbottomsurfaceenergy} tilde{D} of $\psi$ and $\sigma$  in terms of $c_F$ and $c_S$. \EEE
In particular, we obtain that
\begin{equation}\label{intro_sigma}
\sigma:=2c_F-\frac{c_S}{q}, 
\end{equation}
where $q$ relates to the proportion between $e_F$ and $e_S$ (see \eqref{eSratio}), and $\Gamma (\nu(\cdot))$ is found to be the $\pi/3$-periodic function  such that 
\begin{equation}\label{intro_gamma}
\Gamma (\nu(\varphi)):=2 c_F\left(\nu_2(\varphi)-\frac{\nu_1(\varphi)}{\sqrt{3}}\right)
\end{equation}
for every
$$
\nu(\varphi)=\left(\begin{array}{c} -\sin \varphi \\ \cos \varphi   \end{array} \right)
$$
with $\varphi \in [0, \pi/3]$. 

A crucial difference  with respect to  \cite{Yeung-et-al12} in the proof of the lower and upper bound of such $\Gamma$-convergence result  is that  the \FFF adhesion \EEE term in \eqref{winterbottomsurfaceenergy} can be negative and originates  in view of the \FFF averaging of the periodic adhesion deficit related to the dislocations at the film-substrate interface.  \EEE In particular, it is the limit of the adhesion portion of the boundary of auxiliary sets $H_n'$  associated to the configurations $D_n$ (see definition \eqref{H2} based on lattice \emph{Voronoi cells}) in the \emph{oscillatory sets} $O_n$ (see Figure \ref{fig:lattices}). % related to presence of the adhesion deficits.  
We notice that for such averaging  arguments extra care is needed, as the results available from the continuum theories cannot directly be applied to the auxiliary sets $H_n'$ when $e_F$ is not a multiple of $e_S$, e.g., with respect to \cite{BA} (see also \cite{CM}) because \FFF of the non-constant  deficit, \EEE and  with respect to \cite{AlbDes} when $4c_F\leq c_S< 6c_F$ because of the discrepancy between the continuum and the discrete dewetting conditions.  %\ZZZ (check last sentence with the  references) \PPP

\FFF Finally, we observe that as a consequence of the $\Gamma$-convergence result contained in Theorem \ref{thm:convergence_minimizers}, not only the convergence of global minimizers of \eqref{discrete_minimum}  to a minimizer of \eqref{winterbottomsurfaceenergy}, but also the convergence of \emph{isolated local} (with respect to a proper topology) minimizers of $E_n$ to an \emph{isolated local} minimizer of \eqref{winterbottomsurfaceenergy} as $n\to\infty$ is entailed (see, e.g., \cite{kohn1}).   In this regard, we refer to  \cite{srolovitz1,srolovitz2} for the importance  of detecting the \emph{local equilibrium shapes} related to energies of type \eqref{winterbottomsurfaceenergy},  especially  in relation to the various kinetic phenomena affecting the  dewetting dynamics, such as Rayleigh-like instabilities, corner-induced instabilities, and periodic mass-shedding. \EEE

\subsection{\FFF Paper organization} \EEE In Section \ref{setting} we introduce the mathematical setting with  the discrete models (expressed  \FFF  both \EEE with respect to \FFF lattice configurations and to \EEE Radon measures) and the  continuum model, and the three main theorems of the paper. In Section \ref{sec:wetting} we treat the wetting regime and prove  Theorem \ref{wetting_theorem}. In Section \ref{sec:compactness} we establish  the compactness result for energy-$E_n$-equibounded almost-connected sequences. In Section \ref{sec:lower_bound} we prove the lower bound \FFF of the \EEE $\Gamma$-convergence result. In Section \ref{sec:upper_bound} we prove the upper bound \FFF of \EEE the $\Gamma$-convergence result. In Section \ref{sec:main_results} we study the convergence of almost-connected transformations  of minimizers  and present the proofs of both Theorems \ref{connectness} and \ref{thm:convergence_minimizers}. 
Finally, an Appendix with specific auxiliary results particularly important in the various  proofs is added for the Reader's convenience.

\EEE

\section{Mathematical setting \PPP and main results} \label{setting}
\PPP
In this section we rigorously introduce the discrete and continuous models, the notation and definitions  used throughout the paper, and the main results. 
%\FFF In the following  for a set $A \subset \mathbb{R}^d$ we denote by   $\partial A$ its topological boundary and by $\overline{A}$ its closure. We denote by $B(x,R)$ an open ball of radius $R$ centered at $x \in \mathbb{R}^2$ and $B(R):=B(o,R)$  where $o$is the origin in $\mathbb{R}^2$. \EEE

 %By $B(x,R)$ we denote the open ball of radius $R$ around point $x \in \mathbb{R}^2$. $\bar{B}(x,R)$ denotes the closed ball. $B(R)$ denotes the ball of radius $R$ with center at the origin while $\bar{B}(R)$ its closure. For a set $A \subset \mathbb{R}^2$, $\partial A$ denotes its topological boundary, $\overline{A}$ its closure, 
 
% $\# A$ its cardinality, 
%$\textrm{diam} (A)$ denotes its diameter.
 %If $A \subset \mathbb{R}^2$ is a set of finite perimeter $\partial^* A$ denotes its essential boundary. 
%For $A,B \subset\Rz^2$ by $\textrm{dist}(A,B)$ we denote the distance between sets $A$ and $B$.  

%For an open set $A \subset \Rz^n$ by $\mathcal{B}(A)$ we denote the Borel $\sigma$-algebra on $A$. 
%$A \subset\subset B$ means that the closure of $A$ is contained in $B$. 
%For a point $x\in \Rz^2$, $\delta_x$ denotes Dirac measure concetrated at $x$. 
%By $C_0(\Rz^2)$ we denote the set of continuous functions with compact support in $\Rz^2$.  

\subsection{\PPP Setting with lattice configurations}\label{sec:lattice_configurations}   We begin by introducing a reference lattice  $\mathcal{L}\subset\Rz^2$ for the atoms of the substrate and of the film, which we assume to remain separate. %We refer in the following to $S:=\Rz\times\{r\in\Rz\,:\, r<0\}$ as to the substrate region. 
We define  $\mathcal{L}:=\mathcal{L}_S\cup\mathcal{L}_F$, where $\mathcal{L}_S\subset\overline{S}$ denotes the reference lattice for the substrate atoms, $S:=\Rz\times\{r\in\Rz\,:\, r<0\}$ is referred to as the  \emph{substrate region},  and  $\mathcal{L}_F\subset\Rz^2\setminus\overline{S}$ is the reference lattice for the film atoms. %by do not interpenetrate and are arranged in a  lattice $\mathcal{L}:=\mathcal{L}_S\cup\mathcal{L}_F$, where $\mathcal{L}_S\subset\Rz\times\{r\in\Rz\,:\, r\leq0\}$ is the reference lattice for the substrate atoms and  $\mathcal{L}_F\subset\Rz\times\{r\in\Rz\,:\, r>0\}$ is the reference lattice for the film atoms. 

More precisely, we consider the substrate lattice as a fixed lattice, i.e., every lattice site in $\mathcal{L}_S$ is occupied by a substrate atom,  such that 
$$\partial\mathcal{L}_S:=\mathcal{L}_S\cap\{(r,0):\,r\in\Rz\,\}=\{s_k:=(k e_S,0) \,:\,k\in\Zz\}$$
%$$e_S:=\min_{x,x'\in\partial\mathcal{L}_S}{|x-x'|}>0, \ZZZ \text{(I thgink we could take away this assumption and allowing  the inf to be zero?)}\PPP$$
for a positive lattice constant $e_S$, and we refer to $\partial\mathcal{L}_S$ as to the  \emph{substrate surface} (or \emph{wall}). 
For the film lattice   $\mathcal{L}_F$ we choose a triangular lattice with parameter $e_F$ normalized to 1, namely 
$$\mathcal{L}_F:=\{x_F+k_1 \tone+k_2 \ttwo\,:\, \UUU \textrm{$k_1\in\Zz$ and $k_2\in\Nz\cup\{0\}$} \BBB\}%\quad\textrm{for $\tone:=\left(1,0\right)$ and $\ttwo:=\left(\frac{1}{2},\frac{1}{2\sqrt{3}}\right)$}.
$$
where  $x_F:=(0,e_S)$, \EEE
$$\tone:={1 \choose 0},\quad\textrm{and}\quad\ttwo:= \UUU  \frac{1}{2}{1 \choose \sqrt{3}}. % \left({\begin{array}{c}
 %1\\\\
%  \sqrt{3}\\
% \end{array}}\right)\EEE
$$
%{\color{pink} With the analysis presented here it is also possible to assume different values of $x_F$. } 
We  denote by $\partial \mathcal{L}_F$ the \emph{lower boundary} of the film lattice, i.e.,
$$
\partial\mathcal{L}_F:=\{x_F+k_1 \tone:\, \UUU \textrm{$k_1\in\Zz$} \BBB\}
$$
\PPP and by  $\partial \mathcal{L}_{FS}$ the collection of sites  in the lower boundary of the film lattice at a distance of $e_S$ from an atom in $\partial\mathcal{L}_S$, i.e., $$\partial \mathcal{L}_{FS}:= \partial \mathcal{L}_F \cap \left( \partial \mathcal{L}_S +e_S\tthree \right)$$ 
where 
$$
\tthree:={0 \choose 1}
$$
(see Figure \ref{fig:lattices}).
\begin{figure}
\includegraphics[width=0.75\textwidth]{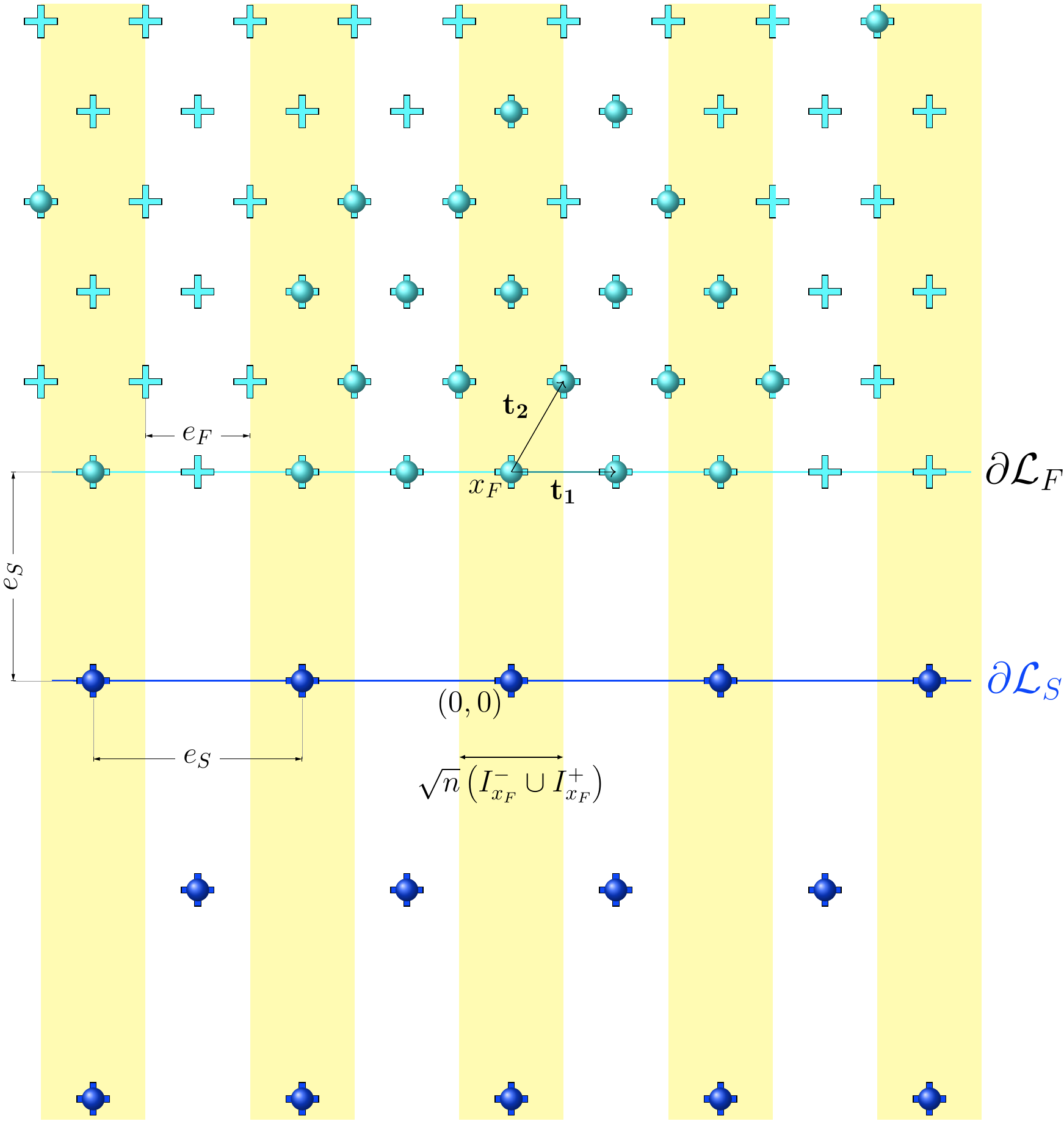}
\caption{\PPP A portion of the lattices $\mathcal{L}_F$  and $\mathcal{L}_S$ is depicted with the respective lattice sites in light and dark blue crosses, respectively. The lattice $\mathcal{L}_S$ is fully occupied by substrate atoms represented by dark blue balls, while only some sites of $\mathcal{L}_F$ are occupied by  film atoms represented by light blue balls.  The ``interface'' $\partial \mathcal{L}_{F}$ consists of all the lattice sites on the light-blue line, while the ``interface'' $\partial \mathcal{L}_S$ consists of all the lattice sites on the dark-blue line. In yellow we can see the \emph{oscillatory set}  related to the lattice sites in $\partial \mathcal{L}_{FS}$, which is  introduced in Section \ref{sec:lower_bound}. % \ZZZ(Remember take away closure and adjust yellow strip, might be better to leave some film lattice free from atoms)\PPP. 
}
\label{fig:lattices}
\end{figure}
 The sites of the film lattice are not assumed to be completely filled and we refer to a set of $n\in\Nz$ sites $x_1,\dots,x_n\in \mathcal{L}_F$ occupied by film atoms as a \emph{crystalline configuration} denoted by $D_n:=\{x_1,\dots,x_n\}\subset \mathcal{L}_F$. Notice that the labels for  the elements of a configuration $D_n$ are uniquely determined by increasingly assigning them with respect to a chosen fixed order \FFF on \EEE the lattice sites \FFF of \EEE $\mathcal{L}_F$.  With a slight abuse of notation we refer to $x\in D_n$ as an atom in $D_n$ (or in $\mathcal{L}_F$). We denote the family of crystalline configurations with $n$ atoms by $\mathcal{C}_n$. \FFF Furthermore, given a set $A\subset \mathbb{R}^2$, its cardinality is indicated by $\#A$, so that $$\mathcal{C}_n:=\{A\subset\mathcal{L}_F\,:\, \#A=n\}.$$ \EEE

For every atom $x\in\mathcal{L}_F$ we take into account both its atomistic interactions with other film atoms and with the substrate atoms, by considering the two-body atomistic potentials $v_{FF}$ and $v_{FS}$, respectively. We restrict to first-neighbor interactions and we define $v_{F\alpha}$ for $\alpha:=F,S$ as 
$$
v_{F\alpha}(r):=v_{\alpha}(r)
$$
where $v_{\alpha}$ are the Heitmann-Radin potentials defined for $\alpha:=F,S$ by 
\begin{equation}\label{HR}
v_{\alpha}(r):=\begin{cases} +\infty &\mbox{if }  r<e_{\alpha},\\ 
-c_{\alpha} &\mbox{if }  r=e_{\alpha},\\ 
0 & \mbox{if }  r>e_{\alpha}. 
\end{cases}
\end{equation}
with  $c_{\alpha}$ and $e_{\alpha}$ positive constants. 

 In the following, we refer to  \emph{film} and \emph{substrate neighbors}   of an atom $x$ in a configuration $D_n$ as to those atoms   in  $D_n$ at distance 1 from $x$, and to those atoms in  $\mathcal{L}_S$ at distance $e_S$  from $x$, respectively. Analogously, we refer to  \emph{film} and \emph{substrate bonds} of an atom $x$ in a configuration $D_n$ as to those segments connecting $x$ to its film and substrate neighbors, respectively. % other atoms (if any)  in  $D_n$ at distance 1 from $x$, or to those atoms in  $\mathcal{L}_S$ at distance $e_S$. and to those segment atoms in $\mathcal{L}_S$ and film, resp. substrate,  \emph{bonds}  as to the other atoms in  and 
We also refer to the union of the closures of all film bonds of atoms in a configuration $D_n$ as the \emph{bonding graph} of  $D_n$, and we say that a crystalline configuration $D_n$ is \emph{connected} if every $x$ and $y$ in $D_n$ are connected through a path in the bonding graph of $D_n$, i.e., there exist $\ell\leq n$ and $x_k\in D_n$ for $k:=1,\dots,\ell$ such that $|x_k-x_{k-1}|=1$, $x_1=x$, and $x_\ell=y$. \FFF Moreover,  we define  the \emph{boundary of a configuration} $D_n\in\mathcal{C}_n$ as the set  $\partial D_n$ of atoms of $D_n$ with less than 6 film neighbors. \FFF We notice here that with a slight abuse of notation, given a  set $A\subset\mathbb{R}^2$  the notation $\partial A$ will also denote the topological boundary of a set $A\subset\mathbb{R}^2$  (which we intend to be always the way to interpret the notation when applied not to configurations in $\mathcal{C}_n$, or to lattices, such as for $\partial \mathcal{L}_{S}$, $\partial \mathcal{L}_{F}$, and $\partial \mathcal{L}_{FS}$). \EEE 

\EEE

% In the following we will make use of the following notion of  configurations connected through paths along the bonding graph.

%\begin{definition}\label{connected_configuration}
% We say that a configuration $D_n$ is \emph{connected} if for every $x_i$ and $x_j$ in $D_n$ there exists $y_k\in D_n$ for $k:=1,\dots,\ell$ with $\ell\leq n$ such that $|y_k-y_{k-1}|=1$, $y_0=x_i$, and $y_n=x_j$.
% \end{definition}

The energy $V_n$ of a configuration $D_n:=\{x_1,\ldots,x_n\}\subset \mathcal{L}_F$ of $n$ particles is defined by 
%Our aim is to provide a \emph{microscopic justification} of the Winterbottom shape starting from energies $E_n:\UUU(\mathbb{R}\times\mathbb{R}^+)^{n}\EEE\to\mathbb{R}\cup\{+\infty\}$ defined on configurations $D_n:=\{x_1,\ldots,x_n\}\subset \UUU(\mathbb{R}\times\mathbb{R}^+)^{n}\EEE$ of the $n$ droplet atoms by 
\begin{equation}\label{V}
\UUU V_n(D_n)=\EEE V_n(x_1,\ldots,x_n)\PPP:=\UUU\sum_{i\neq j} v_{FF}(|x_i-x_j|)\,+\,  E_S(x_1,\ldots,x_n)\EEE%\,+\,\frac{1}{2}\sum_{(i,j,k)\in \mathcal{A}} v_3(\theta_{ijk})\,,\qquad\,  \raisebox{-10pt}{\includegraphics[scale=0.7]{OSCI_Figures/bonds}}
\end{equation}
where  $E_S:(\Rz^2\setminus\overline{S})^{n}\EEE\to\Rz\cup\{\infty\}$ represents the overall contribution of the substrate interactions defined  as  
\begin{equation}\label{substrateenergy}
E_S(D_n)=E_S(x_1,\dots,x_n):=\sum_{i=1}^n v^1(x_i),
\end{equation}
where the one-body potential $v^1$ is defined by
\begin{equation}\label{substrate2}
v^1(x):=\sum_{s\in\mathcal{L}_S} v_\textrm{FS}(|x-s|)
\end{equation}
for any $x\in\Rz\times\{r\in\Rz\,:\, r>0\}$.  Notice that from the definition of $v_{FS}$ and $x_F$ for any $x\in\mathcal{L}_F$ the sum in \eqref{substrate2} is finite and  
$$v^1(x)\in\{0,-c_S\}.$$
%Note that the set of minimizers of $v_1$ form a curve contained in $\Rz\times[0,e_S]$ parametrizable as the graph of a periodic function from $\Rz$ to $[0,e_S]$ with period $1$.  Furthermore, if $0<e_{S}\ll1$, then $$v^1(x)\approx v_{S}(x^2)$$ for  any $x:=(x^1,x^2)\in\Rz^2$.  contained in $\Rz\times[0,e_S]$

\PPP In the following we will always focus on the  case
\begin{equation}\label{eSratio}
e_S:=\frac{q}{p}
\end{equation}
for some $p,q\in\Nz$ without common factors, since the case of $e_S=re_F$ for some $r\in\Rz\setminus\Qz$ is simpler, as the contribution of $E_S$ is negligible.  More precisely, for $e_S=re_F$ with $r\in\Rz\setminus\Qz$ the same analysis (or the one in \cite{Yeung-et-al12})   applies, and, up to rigid transformations, minimizers converge to a Wulff shape in $\Rz^2\setminus S$ with the Wulff-shape boundary intersecting $\partial S$ at least in a point.

We also notice that the setting in which $\mathcal{L}_F$ is replaced by 
$$\mathcal{L}'_F:=\mathcal{L}_F+\left(\frac{e_S}{2},\frac{\sqrt{3}e_S}{2}-e_S\right)$$
where each atom in $\partial\mathcal{L}_{FS}$  may present up to two   (instead of one) bonds with $\mathcal{L}_S$ (which are obliques instead of in $\tthree$ direction)  is analogous and the same arguments of the paper lead to the corresponding  $\Gamma$-convergence result. %\ZZZ Check if we want to add also a reference for the setting with a different bond length than $e_S$ (that it is the one between film and bond) for the bonding between substrate atoms. \EEE

\PPP %\subsection*{$\Gamma$-convergence result} 
\subsection{Setting with Radon measures} \label{radon_model} %As in \cite{Yeung-et-al12}, 
The $\Gamma$-convergence result is established  for a version of the previously described discrete model expressed in terms of \emph{empirical measures} since it is obtained with respect to the weak* topology of Radon measures \cite{AFP}. We denote the space of Radon measures on $\Rz^2$ by $\mathcal{M}(\Rz^2)$ and we write $\mu_{n}\stackrel{*}{\rightharpoonup} \mu$ to denote
the  convergence of  a sequence  $\{\mu_n\}\subset\mathcal{M}(\Rz^2)$ to a measure $\mu\in\mathcal{M}(\Rz^2)$ with respect to the weak* convergence of measures. The empirical measure $\mu_{D_n}$ \emph{associated to a configuration} $D_n:=\{x_1,\dots,x_n\}\in\mathcal{C}_n$ is defined by 
 \begin{equation} \label{empiricalmeasures}
 \mu_{D_n}:=\frac{1}{n} \sum_{i=1}^n \delta_{\frac{x_i}{\sqrt n}}, 
\end{equation}
\FFF where $\delta_z$ represents the Dirac measure concentrated at a point $z\in \Rz^2$, \EEE and the family of empirical measures related to configurations in $\mathcal{C}_n$ is denoted by $\mathcal{M}_n$, i.e.,
\begin{equation}\label{radon_space}
\mathcal{M}_n:=\{\mu\in\mathcal{M}(\Rz^2)\ :\ \text{there exists $D_n\in\mathcal{C}_n$ such that $\mu=\mu_{D_n}$}\}.
\end{equation}

The  functional $I_n$  associated to the configurational energy $V_n$ and expressed in terms of Radon measures is given by 
\begin{equation}
 I_n(\UUU\mu\EEE)\PPP:=
 \EEE\left\{\begin{array}{lll} % \int_{(\mathbf{R}^2_{+})^2 \backslash \textrm{diag})} n^2 v_{FF}(n^{1/2}|x-y|) d \mu_{D_n} \otimes d \mu_{D_n}, 
  \int_{(\PPP\mathbf{R}^2\setminus\overline{S}\EEE)^2 \backslash \textrm{diag}} n^2 v_{FF}(n^{1/2}|x-y|) d \mu\UUU(x)\EEE 
  \otimes d \mu\UUU(y) &%+ \int_{\PPP\mathbf{R}^2\setminus\overline{S}\EEE} n \UUU v^1\EEE (n^{1/2}x) d \UUU \mu(x) \EEE \EEE\quad\,\,\, 
  &\quad\textrm{\PPP if $\mu\in\mathcal{M}_n$, } %\UUU\mu\EEE=\frac{1}{n} \sum_{i=1}^n \delta_{x_i/\sqrt{n}}
  \\ 
 %\hspace{+40ex}  
&\hspace{-15ex} \RRR \EEE + \int_{\PPP\mathbf{R}^2\setminus\overline{S}\EEE} n \UUU v^1\EEE (n^{1/2}x) d \UUU \mu(x) \EEE &  \label{radon_functional}\\
%&\quad\textrm{for \PPP some $\{x_i\}\in\mathcal{C}_n$}\EEE, \\ 
& \\  +\infty &&\quad\textrm{otherwise,} 
\end{array}\right.
\end{equation}
\FFF where \EEE
$$ 
\textrm{diag}:=\{(y_1,y_2) \in \Rz^2: y_1=y_2\}. 
  $$	

%where $\mathbf{R}^2_{+}:=\Rz\times\{r\in\Rz\,:\, r>0\}$.
\PPP We notice that  the two versions of the discrete model are equivalent, since 
\begin{equation}\label{energy_equivalence}
V_n(D_n)=I_n(\mu_{D_n})
\end{equation}
for every  configuration $D_n\in\mathcal{C}_n$, where $\mu_{D_n}\in\mathcal{M}_n$ is defined by \eqref{empiricalmeasures}, and that $D_n$ minimizes $V_n$ among crystalline configurations in $\mathcal{C}_n$ if and only if $\mu_{D_n}$ minimizes $I_n$  among Radon measures of $\mathcal{M}(\Rz^2)$.

\PPP \subsection{\PPP Local and strip energies} \label{local_energy} \EEE We  define a \emph{local energy} \PPP $E_{\rm loc}$  per site $x\in\mathcal{L}_F$ with respect to a configuration $D_n$, \EEE by
 \begin{equation}\label{Eloc}
 E_{\rm loc}(x):=\begin{cases}
\sum_{y\in D_n\setminus\{x\}} v_{FF}(|x-y|) \,+\,6c_F 
 &\text{if $x\in D_n$,}\\
 0 &\text{if $x\notin D_n$},
 \end{cases}
 \end{equation}
\PPP which corresponds in the case of an atom $x\in D_n$ to the number of missing film bonds of $x$.
 We also refer to deficiency $ E_{\rm def}(x)$ of a site $x\in\mathcal{L}_F$ with respect to a configuration $D_n$ as to the quantity
  \begin{equation}\label{deficiency}
 E_{\rm def}(x):=  
 \begin{cases}
 E_{\rm loc}(x)\,+\,v^1(x)
 &\text{if $x\in D_n$,}\\
 0 &\text{if $x \notin  D_n$}.
 \end{cases}
  \end{equation}
Furthermore, we define the \emph{strip} $\mathcal{S}(x)$ associated to any lattice site \PPP $x:=(x^1,e_S)\in D_n\cap\partial\mathcal{L}_{FS}$ with $x_1\in\Rz$ \EEE as the collection of atoms 
\begin{equation}\label{strip}
\mathcal{S}(x)=\mathcal{S}_{D_n}(x):=\{x,x_{\pm},\tilde{x},\tilde{x}_{\pm}\}\cap D_n
\end{equation}
where \PPP $x_{\pm}$, \EEE $\tilde{x}$,  and $\tilde{x}_{\pm}$ are defined by
\begin{align*}
&x_{\pm}:=x\pm\tone,\\
&\tilde{x}:=(x^1,y_M)\quad \text{where}\quad y_M:=\max\{y\geq0\,:\, (x^1,y)\in D_n\},\\
&\tilde{x}_{+}:=\tilde{x}+\ttwo,\\
&\tilde{x}_{-}:=\tilde{x}+\ttwo-\tone
\end{align*}
\PPP (see Figure \ref{fig:strip}). \EEE
\begin{figure}
\includegraphics[width=0.4\textwidth]{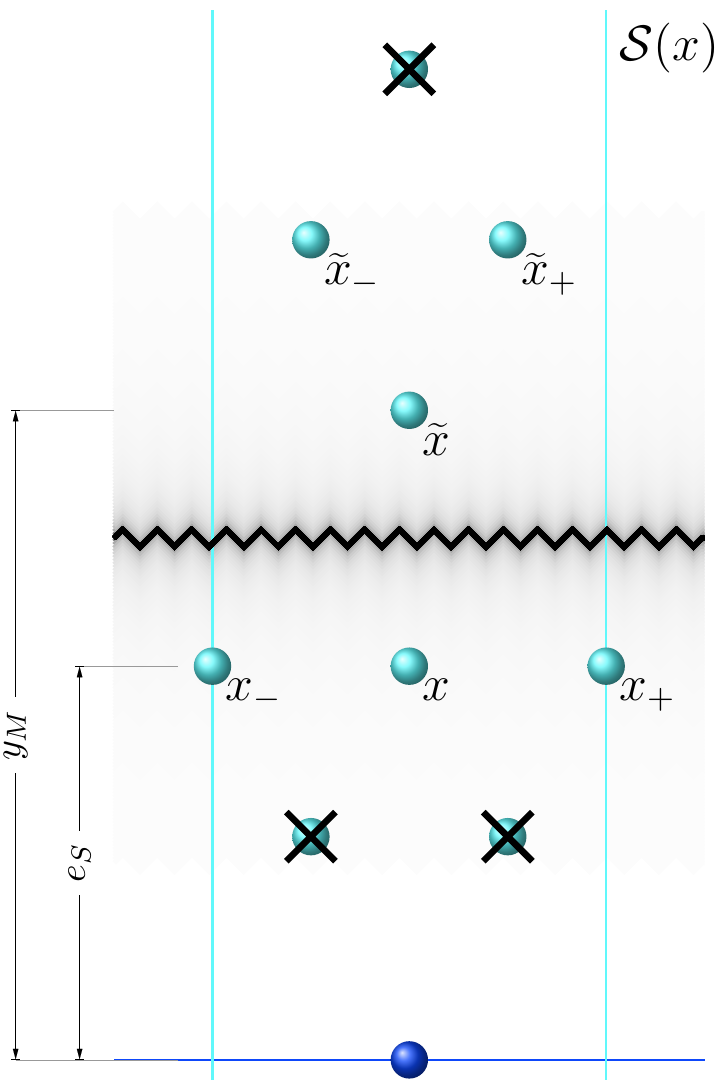}
\caption{\FFF The \EEE strip $\mathcal{S}(x)$ centered at an atom $x\in\partial\mathcal{L}_{FS}$ of a crystalline configuration $D_n$ is depicted as an example of a strip containing all the elements $x,x_{\pm},\tilde{x},\tilde{x}_{\pm}$ \PPP  with the possibility of the \emph{strip center} $x$ and the \emph{strip top} $\tilde{x}$ to coincide if $y_M=e_S$. The sites  indicated by crossed atoms are sites of the planar lattice $\{x_F+k_1 \tone+k_2 \ttwo\,:\, \textrm{$k_1, k_2\in\Zz$}\}$ that surely are not in $D_n$ by definition of $\mathcal{L}_F$ and  $\mathcal{S}(x)$.}
\label{fig:strip} 
\end{figure}
In the following we refer to $x$ as the \emph{strip center} of $\mathcal{S}(x)$, to $x_{\pm}$ as \PPP the \EEE \emph{strip lower (right and left) sides}, to $\tilde{x}$ as  \PPP the \EEE  \emph{strip top}, and to $\tilde{x}_{\pm}$ as  \PPP the \EEE \emph{strip above (right and left) sides}.  Note \PPP that $x$ and $\tilde{x}$ coincide if $y_M=e_S$.  

We define the \emph{strip energy} associated to a strip $\mathcal{S}(x)$ by 
\begin{equation}\label{Estrip}
  E_{\rm strip}(x):= E_{\rm strip, below}(x)\,+\,E_{\rm strip, above}({\color{black} x}),
\end{equation}
where
 \begin{align} \label{Estrip_below1}
 E_{\rm strip, below}(x):=& E_{\rm loc}(x)\,+\, \frac12 E_{\rm loc}(x_{+})\,+\, \frac12 E_{\rm loc}(x_{-})-c_S
   \end{align}
  in the case $\PPP q\neq 1 \EEE$, while
  \begin{align} \label{Estrip_below2}
 E_{\rm strip, below}(x):=&\frac12 E_{\rm loc}(x)\,+\, \frac14 E_{\rm loc}(x_{+})\,+\, \frac14 E_{\rm loc}(x_{-})-c_S
 \end{align}  
 in the case $\PPP q= 1 \EEE$, and    
\begin{equation}  \label{Estrip_above}
 E_{\rm strip, above}(x):=\begin{cases}
E_{\rm loc}(\tilde{x})\,+\, w_-(\tilde{x})( E_{\rm loc}(\tilde{x}_{+})\,+\, w_+(\tilde{x})  E_{\rm loc}(\tilde{x}_{-}) &\textrm{if $\tilde{x}\neq x$,}\\
w_-(\tilde{x}) E_{\rm loc}(\tilde{x}_{+})\,+\, w_+(\tilde{x}) E_{\rm loc}(\tilde{x}_{-}) &\textrm{if $\tilde{x}=x$}
  \end{cases}
\end{equation}
with weights $w_{\pm}(\tilde{x})\in\FFF \{1/2,1\}$ \EEE% {\color{pink}\{\frac{1}{2},1 \}}$ 
given by
\begin{equation}  \label{weigths_above}
w_{\pm}(\tilde{x}):=\begin{cases}
1&\textrm{if  $x_{\pm}\not\in\PPP D_n\cap\partial\mathcal{L}_{FS}$ or \EEE $\tilde{x}_{\pm}\neq \widetilde{(x_{\pm})}_{\mp}$,}\\
\frac12 &\textrm{if  $x_{\pm}\in\PPP D_n\cap\partial\mathcal{L}_{FS}$ and \EEE$\tilde{x}_{\pm}= \widetilde{(x_{\pm})}_{\mp}$}.
  \end{cases}
\end{equation}

  %%%%
  \subsection{\PPP Almost-connected configurations} \label{sec:almost_connected} \PPP 
   
   We recall from Section  \ref{sec:lattice_configurations} that  a configuration $D_n$ is said to be connected  if every $x$ and $y$ in $D_n$ are connected through a path in the bonding graph of $D_n$, i.e., there exist $\ell\leq n$ and $x_k\in D_n$ for $k:=1,\dots,\ell$ such that $|x_k-x_{k-1}|=1$, $x_1=x$, and $x_\ell=y$, and we refer to maximal bonding subgraphs of $D_n$ connected through a path as \emph{connected components} of $D_n$. 

  In order to treat the situation when $q\neq1$ we need to introduce also a weaker notion of  connectedness of configurations, which depends on $e_S$: We say that a configuration $D_n$ is \emph{almost connected} if it is connected when $q=1$, and, if \FFF there exists an  enumeration of its $k:=k_{D_n}$ connected components, say   $D_n^i$, $i=1,\dots,k$, such that \FFF each \EEE $D_n^{i}$ is separated by at most $q$  from $\cup_{l=1}^{i-1}D_n^l$  for every $i=2,\dots,n$,  when $q\neq1$. 
  %{\color{pink} Notice that the definition depends on the \FFF enumeration \EEE of connected components; to eliminate such a dependence we actually demand that there exists a numeration of connected components such that the above property is valid.}
%  \ZZZ (Check: I put $pe_S$ here while there was only $e_S$. {\color{pink} This is fine} ) \PPP 
We say that a family of connected components of $D_n$ form an \emph{almost-connected component} of $D_n$ if their union is almost connected and, if $q\neq1$, it is distant from all other components of $D_n$ by more than $q$. % \ZZZ (Check: I put $pe_S$ here while there was only $e_S$ {\color{pink} This is fine} ). \EEE

\PPP
\begin{definition}\label{transformation} 
%We introduce a transformation of crystalline configuration that consists in performing translations first of the connected components and then of the almost-connected components of configurations. More precisely, given 
Given a configuration $D_n\in\mathcal{C}_n$, %we denote its $K_{D_n}\in\mathbb{N}$ connected components by $\mathcal{C}_i(D_n)$, $i=1,\dots,K_{D_n}$, and 
we define the \emph{transformed configuration} $\mathcal{T}(D_n)\in\mathcal{C}_n$ of $D_n$ as 
$$
\mathcal{T}(D_n):=\mathcal{T}_2(\mathcal{T}_1(D_n)),
$$ 
where $\mathcal{T}_1(D_n)$ is the configuration resulting by iterating the following procedure, starting from $D_n$:% $\mathcal{T}_1$ and $\mathcal{T}_2$ are the transformations defined by: 
 \begin{itemize}
 \item[-] If there are connected components without any activated bond with an atom of $\partial\mathcal{L}_S$, then select one of those components with lowest distance from $\partial\mathcal{L}_S$; 
 \item[-] Translate the component selected at the previous step of a vector in direction $-\ttwo$ till either a bond with another connected component or with the substrate is activated. %{\color{pink} In the case when the component has non-empty intersection with $\partial \mathcal{L}_F$, but still does not have activated bond with $S$, then we translate it in the direction $\tone$ in a way that it has at least one activated bond with $S$};
 \end{itemize}
 (notice that the procedure ends when all connected components of $\mathcal{T}_1(D_n)$ have at least a bond with  $\partial\mathcal{L}_S$), and  $\mathcal{T}_2(\mathcal{T}_1(D_n))$ is the configuration resulting by iterating the following procedure, starting from  $\mathcal{T}_1(D_n)$: 
  \begin{itemize}
   \item[-] If there are more than one almost-connected component, then select the almost-connected component whose  leftmost bond with  $\partial\mathcal{L}_S$ is the second (when compared with \FFF the \EEE other almost-connected components) starting from the left;
  % If there are more than one almost-connected component, then  consider the leftmost bond with  $\partial\mathcal{L}_S$ of each almost-connected component and select the almost-connected component whose  leftmost bond is the second starting from the left; 
       \item[-] Translate the  almost-connected component selected at the previous step of a vector  $-kq\tone$ for some $k \in \mathbf{N}$ till, if $q=1$,  a bond with another connected component is activated, or, if $q\neq1$, the distance with another almost-connected component is  less or equal to $q$; %\ZZZ (Check: I put equal to $pe_S$ and not ``less or equal'' with same energy as before, for simplicity.{\color{pink} This is fine.}) \EEE

% \item[-] Select the almost-connected component $a(\mathcal{T}_1(D_n))$ of $\mathcal{T}_1(D_n)$ with leftmost bond with  $\partial\mathcal{L}_S$;
% \item[-] Determine the leftmost atom $s\in\partial\mathcal{L}_S$ bonded of $\mathcal{T}_1(D_n)\setminus a(\mathcal{T}_1(D_n))$ and denote its almost-connected component by $a'(\mathcal{T}_1(D_n))$;
%  \item[-] Translate $a'(\mathcal{T}_1(D_n))$ with respect to the vector $-\tone$ till,  if $q=1$, a bond with another connected component  is activated, or, if $q\neq1$, the distance with another almost-connected component is equal to $pe_S$;
  \end{itemize} 
 (notice that the procedure ends when $\mathcal{T}_2(D_n)$ is almost connected). 
\end{definition}

\noindent We notice that the transformed configuration $\mathcal{T}(D_n)$ of a configuration $D_n\in\mathcal{C}_n$ satisfies the following properties: 
 \begin{itemize}
 \item[(i)] $\mathcal{T}(D_n)$ is almost connected;
  \item[(ii)]  Each  connected component of $\mathcal{T}(D_n)$ includes at least an atom bonded to $\partial\mathcal{L}_S$;
  \item[(iii)] $V_n(\mathcal{T}(D_n))\leq V_n(D_n)$ (as no active bond of $D_n$ is deactivated by performing the transformations $\mathcal{T}_1$ and $\mathcal{T}_2$);
  \end{itemize}
  and,  if $D_n$ is a minimizer of $V_n$ in $\mathcal{C}_n$, then 
   \begin{itemize}
 \item[(iv)] $\mathcal{T}_1(D_n):=D_n$;
  \item[(v)] $\mathcal{T}$ consists of translations of the almost-connected components of $D_n$ with respect to a vector (depending on the component)  in the direction $-\tone$ with norm  in $\Nz\cup\{0\}$. 
  \end{itemize}
  
  \FFF Finally we also observe that the definitions of $\mathcal{T}_1$, $\mathcal{T}_2$, and $\mathcal{T}$ are independent from $n$. 

\EEE

  \subsection{\PPP Continuum setting} \label{sec:continuum_model} \EEE

\PPP For every set of finite perimeter $D\subset\Rz^2\setminus S$ we define its anisotropic surface energy $\mathcal{E}$ by \EEE
\begin{equation} \label{continuum}
\mathcal{E} (D):= \int_{\partial^* D \cap \partial S} \Gamma (\nu_D) d \mathcal{H}^1+\left( 2c_F-\frac{c_S}{q}\right) \mathcal{H}^1(\partial^*D \cap \partial S)
\end{equation}
\PPP where $\partial^* D$ denotes the reduced boundary of $D$ and the \emph{anisotropic surface tension} $\Gamma:\mathbb{S}^{1} \to \mathbb{R}$ is the function such that it holds 
\begin{equation} \label{gamma}
\Gamma (\nu(\varphi))=2 c_F\left(\nu_2(\varphi)-\frac{\nu_1(\varphi)}{\sqrt{3}}\right)
  \end{equation} 
 for every
$$
\nu(\varphi)=\left(\begin{array}{c} -\sin \varphi \\ \cos \varphi   \end{array} \right)\in\mathbb{S}^{1}\quad\text{with}\quad  \varphi \in \left[0, \frac{\pi}{3}\right],
$$
\FFF also when $\Gamma\circ\nu$ is extended \EEE periodically on $\Rz$ as a  $\pi/3$-periodic function. Notice  that  $\Gamma (\tthree)=\Gamma (\ttwo)=2c_F$. \FFF  By extending $\Gamma$ by homogeneity we obtain a convex function, and in particular  a  Finsler norm on $\Rz^2$. \EEE

\PPP We also use the following \emph{auxiliary surface energy} \FFF depending on $n$ \EEE in the proofs 
\begin{equation} \label{continuum_n}
\mathcal{E}_n (D):= \int_{\partial^* D \cap (\Rz^2\setminus \overline{S_n})} \Gamma (\nu_D) d \mathcal{H}^1+\left( 2c_F-\frac{c_S}{q}\right) \mathcal{H}^1(\partial^*D \cap \partial S_n) \end{equation}
where  
\begin{equation}\label{substrate_n}
S_n:=S+\frac{e_S}{\sqrt n}\tthree.
 \end{equation}

  \subsection{\PPP Main results} \label{sec: main_results} \EEE
% The first result is a crystallization result consisting in singling out a condition depending only on the constants $c_F$ and $c_S$ characterizing the wetting regime, i.e., the situation in which all the minimizers of $V_n$ are configurations contained in $\partial\mathcal{L}_F$. %the minimizers of which the aim of the paper is $\Gamma$-limit of $n^{1/2}(I_n + 6c_Fn)$ with respect to the weak* topology is $\mathcal{E}$.  First we will prove the compactness result, which justifies the used topology, and then we prove lower and upper bound. 

\PPP In this section the rigorous statements of the main theorems of the paper are presented. We begin with following result that characterizes  the wetting regime in terms of a condition only depending on $v_{FF}$ and $v_{FS}$, and  the  minimizers in such regime. \EEE
 \PPP
 \begin{theorem}[Wetting regime]\label{wetting_theorem}
Let   $D^{\rm w}_n:=\{w_1,\dots,w_n\}\subset\partial\mathcal{L}_{FS}$ be  \FFF any configuration such \EEE that, if  $q= 1$, 
%\begin{itemize}
%\item[1.] $w_i\in\{(s_k+(0,r))\,:\,k\in\Zz\}$ for $i=2,\dots,{n-1}$;
%\item[2.] $w_{i+1}:=w_i+\tone$  for $i=1,\dots,{n-2}$;
%\item[3.] $w_{1}:=w_2+\ttwo$ and $w_{n}:=w_{n-2}+\ttwo$, if $2c_F\leq c_S$,   or  $w_{1}:=w_2-\tone$ and $w_{n}:=w_{n-1}+\tone$, if $2c_F \geq c_S$;
%\end{itemize}
\begin{equation}\label{connected_wetting00}
w_{i+1}:=w_i+\tone
\end{equation}
for every $i=1,\dots,{n}$ and every $n\in\Nz$. \FFF It holds that \EEE  $D^{\rm w}_n$ \FFF satisfies \EEE the following two assertions for every $n\in\Nz$: 
\begin{itemize} 
	\item[(i)] $V_n(D^{\rm w}_n)=\min{V_n(D_n)}$,
	\item[(ii)] \PPP $V_n(D^{\rm w}_n)<V_n(D_n)$ \EEE for  \PPP every crystalline configuration $D_n$ with $D_n\setminus\partial\mathcal{L}_{FS}\neq\emptyset$ \emph{(}and, for the case  $q= 1$, also for every configuration $D_n$ with $D_n\setminus\partial\mathcal{L}_{FS}=\emptyset$ and for which \eqref{connected_wetting00}  does not hold\emph{)}\FFF,\EEE
%\item[(ii)] $\PPP V_n(D^{\rm w}_n) <  V_n(D_n)$ \EEE for  \PPP any crystalline \EEE configuration $D_n$ with $D_n\cap\mathbb{R}\times\{r>e_S \}\neq\emptyset$\PPP, \EEE \ZZZ (we do not have strict inequality?)\EEE
\end{itemize}
if and only if  %the \emph{wetting condition} is verified
\begin{equation}\label{wetting_condition_total}
\begin{cases}
c_{S}\geq 6c_{F} & \text{if $q\neq 1$},\\
c_{S}\geq 4c_{F} & \text{if $q=1$}. 
\end{cases}
\end{equation}
\FFF In particular, for the necessity of \eqref{wetting_condition_total} it is enough assertion (i), and more specifically that 
%or more precisely it is enough that 
%. Notice that such necessity 
%The reverse implication in the Theorem \ref{wetting_theorem} 
%should be understood  in the following way: If 
there exists an increasing subsequence $(n_k)_{k \in \mathbf{N}}$ such that $V_{n_k}(D^{\rm w}_{n_k})=\min{V_{n_k}(D_{n_k})}$ holds for every $n_k$. \EEE	
\end{theorem}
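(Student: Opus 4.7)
The plan is to prove both implications of the equivalence separately, splitting each into the subcases $q=1$ and $q\neq 1$, which lead to genuinely different bond counts. Necessity is established by exhibiting explicit counterexamples when \eqref{wetting_condition_total} fails. For $q=1$ and $c_S<4c_F$, a two-row rectangular block with $n=2m$ atoms has $V_n=-(8c_F+c_S)m+6c_F$, strictly smaller than $V_n(D_n^{\rm w})=-(2c_F+c_S)n+2c_F$ for large $n$. For $q\neq 1$ and $c_S<6c_F$, a $k$-row block with $m$ atoms per row has per-atom energy asymptotically $-6c_F+4c_F/k-c_S/(qk)$, which for $k$ large enough falls strictly below the wetting per-atom energy $-c_S$. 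Since such counterexamples can be produced at every sufficiently large $n$, no infinite subsequence along which wetting attains the minimum can exist, which also yields the stronger necessity statement at the end of the theorem.

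For sufficiency, assume \eqref{wetting_condition_total} and argue by induction on $n$, the base case $n=1$ being immediate. For the inductive step, given $D_n$ satisfying the hypotheses of (ii), choose $x^*$ as the topmost atom of $D_n$ and, among these, the rightmost; if $D_n\subset\partial\mathcal{L}_F$, instead take the topmost-rightmost point of $D_n\cap(\partial\mathcal{L}_F\setminus\partial\mathcal{L}_{FS})$, which is nonempty by the hypothesis. A direct geometric check shows that the sites $x^*+\tone$, $x^*+\ttwo$ and $x^*+\ttwo-\tone$ do not belong to $D_n$, so the number $d(x^*)$ of film neighbors of $x^*$ in $D_n$ satisfies $d(x^*)\leq 3$ in general and $d(x^*)\leq 1$ when $x^*\in\partial\mathcal{L}_F$, and the choice above ensures $x^*\notin\partial\mathcal{L}_{FS}$. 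Writing
$$
V_n(D_n)=V_{n-1}(D_n\setminus\{x^*\})-2c_Fd(x^*)-c_S\chi_{\partial\mathcal{L}_{FS}}(x^*)
$$
and combining the induction hypothesis $V_{n-1}(D_n\setminus\{x^*\})\geq V_{n-1}(D_{n-1}^{\rm w})$ with the explicit formulas for $V_k(D_k^{\rm w})$, the target inequality $V_n(D_n)\geq V_n(D_n^{\rm w})$ reduces to $2c_Fd(x^*)\leq c_S$ when $q\neq 1$ and to $2c_Fd(x^*)+c_S\chi_{\partial\mathcal{L}_{FS}}(x^*)\leq 2c_F+c_S$ when $q=1$, both of which follow from the degree bound on $x^*$ together with \eqref{wetting_condition_total}.

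The main obstacle is upgrading assertion (i) to the strict inequality of (ii), since the bounds above can be saturated when $d(x^*)$ attains its maximal admissible value \emph{and} $D_n\setminus\{x^*\}$ happens to be a wetting configuration, making the induction hypothesis tight. I rule out this bad combination by a geometric observation: for $q\neq 1$, any atom of $\mathcal{L}_F$ has at most $2$ neighbors in $\partial\mathcal{L}_{FS}$ (since two consecutive integers cannot both be multiples of $q\geq 2$), so if $D_n\setminus\{x^*\}\subset\partial\mathcal{L}_{FS}$ then $d(x^*)\leq 2$ and the resulting gap $c_S-4c_F\geq 2c_F>0$ gives strict inequality; for $q=1$, if $x^*$ lies strictly above $\partial\mathcal{L}_F$ then at most two of its neighbors lie in $\partial\mathcal{L}_F$, again forcing $d(x^*)\leq 2$, while if $x^*\in\partial\mathcal{L}_F$ with $d(x^*)=1$ and $D_n\setminus\{x^*\}$ is a consecutive wetting configuration then $D_n$ itself would be consecutive, contradicting the hypothesis on $D_n$. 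The residual subcase $q=1$ with $D_n\subset\partial\mathcal{L}_{FS}$ non-consecutive is handled directly by a bond count: if $D_n$ splits into $c\geq 2$ horizontal intervals then the film and substrate bond counts are $b_{FF}=n-c$ and $b_{FS}=n$, which yields $V_n(D_n)-V_n(D_n^{\rm w})=2c_F(c-1)>0$.
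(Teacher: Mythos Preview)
Your argument follows the same architecture as the paper's (Propositions~\ref{wetting_regime1} and~\ref{wetting_regime2}): induction on $n$ removing atoms from the top for sufficiency, and comparison with an explicit dense configuration for necessity. The paper peels off the entire top line at once and compares with the Wulff shape; you remove a single atom and compare with rectangular blocks. Both work.

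There is one inaccuracy. In the subcase $q\neq 1$, $D_n\subset\partial\mathcal{L}_F$, you choose $x^*$ as the rightmost point of $D_n\cap(\partial\mathcal{L}_F\setminus\partial\mathcal{L}_{FS})$ and then assert $x^*+\tone\notin D_n$, concluding $d(x^*)\le 1$. This fails: $x^*+\tone$ may well lie in $D_n\cap\partial\mathcal{L}_{FS}$. For instance with $q=2$ and $D_3=\{0,1,2\}\times\{e_S\}$ you get $x^*=(1,e_S)$ and $d(x^*)=2$. The fix is immediate: in this subcase $x^*\in\partial\mathcal{L}_F$ forces $x^*-\ttwo,\,x^*-\ttwo+\tone\notin\mathcal{L}_F$, so $d(x^*)\le 2$, and $2c_F\cdot 2=4c_F<6c_F\le c_S$ still gives the (strict) bound you need. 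Alternatively, handle $D_n\subset\partial\mathcal{L}_F$ for $q\neq 1$ by a direct count exactly as you do for $q=1$: each atom of $D_n\setminus\partial\mathcal{L}_{FS}$ participates in at most two bonds and every bond has at least one endpoint there, giving $V_n(D_n)-V_n(D_n^{\rm w})\ge(c_S-4c_F)\,\#(D_n\setminus\partial\mathcal{L}_{FS})>0$.

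A smaller gap: your block constructions for necessity only hit $n=2m$ (for $q=1$) or $n=km$ (for $q\neq 1$). To get the stronger statement that no subsequence can satisfy (i), you need the comparison for \emph{all} large $n$. This is routine---adding $O(1)$ atoms to your block changes the energy by $O(1)$ while the gap grows linearly---but you should say so. The paper avoids this by using the Wulff configuration, which is available for every $n$.
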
 

%\FFF \begin{remark} For the necessity of \eqref{wetting_condition_total} it is enough assertion (i), or more precisely it is enough that 
%. Notice that such necessity 
%The reverse implication in the Theorem \ref{wetting_theorem} 
%should be understood  in the following way: If 
%there exists an increasing subsequence $(n_k)_{k \in \mathbf{N}}$ such that (i) holds for every $n_k$. 		
%\end{remark} \EEE	
We refer to  \eqref{wetting_condition_total} as a \emph{wetting condition} or as the \emph{wetting regime}, and to the opposite condition, namely 
\begin{equation}\label{dewetting_condition}
\begin{cases}
c_{S}<6 c_{F} & \textrm{if {\color{black} $\PPP q\neq 1 \EEE$}},\\
c_{S}< 4 c_{F} & \textrm{if {\color{black} $\PPP q= 1$}}, \EEE
\end{cases}
\end{equation}
as the \emph{dewetting condition} or the \emph{dewetting regime}. \EEE %since \eqref{dewetting_condition}  contradicts both   \eqref{wetting_condition1} and \eqref{wetting_condition2} 
\PPP The following result shows that connected components with the largest cardinality of minimizers incorporate the whole mass in the limit. \EEE

\begin{theorem}[Mass conservation] \label{connectness} 
\PPP Assume \eqref{dewetting_condition}. If $ \widehat{D}_n$ are  minimizers of $V_n$ among all crystalline configurations in $\mathcal{C}_n$, i.e.,  
$$V_n(\widehat{D}_n)=\min_{D_n\in\mathcal{C}_n}{V_n(D_n)},$$
and we select for every $\widehat{D}_n$ a connected component $\widehat{D}_{n,1}\subset \widehat{D}_n$ with  largest cardinality, then
$$
\lim_{n \to \infty}{\mu_{\widehat{D}_n} (\widehat{D}_{n,1})}=1, 
$$
 %\ZZZ(I am a bit surprise that we can write this in general and not for specific translations of ${\color{pink}\widehat{D}_{n,1}}$, but checking the proof it seems right) \PPP 
where $\mu_{\widehat{D}_n}$ are the empirical measure associated to $\widehat{D}_n$ defined by \eqref{empiricalmeasures}. \EEE
\end{theorem}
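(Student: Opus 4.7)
The plan is to reduce the problem to an almost-connected sequence of minimizers via the transformation $\mathcal{T}$ of Definition \ref{transformation}, and then to combine the compactness result of Section \ref{sec:compactness} with a strict subadditivity of the continuum minimum energy $\mathcal{E}$ that holds under the dewetting condition \eqref{dewetting_condition}.

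First, I would set $\widetilde{D}_n := \mathcal{T}(\widehat{D}_n)$. By the properties (iii)--(v) listed after Definition \ref{transformation}, each $\widetilde{D}_n$ is itself a minimizer of $V_n$, is almost connected, and its connected components are horizontal integer-vector translates of those of $\widehat{D}_n$. In particular the largest connected component has the same cardinality as $\widehat{D}_{n,1}$, so it suffices to prove the mass conservation for $\widetilde{D}_n$ and its largest component $\widetilde{D}_{n,1}$. Since these are minimizers and a Winterbottom-type test configuration realises $E_n = n^{-1/2}(V_n + 6 c_F n) = O(1)$ in the dewetting regime, the compactness result of Section \ref{sec:compactness} applies: up to a subsequence and horizontal translations along $\partial S$, $\mu_{\widetilde{D}_n} \stackrel{*}{\rightharpoonup} \mu = \rho\chi_D$ for some bounded $D \in \mathcal{D}_\rho$, and by the $\Gamma$-convergence statements of Sections \ref{sec:lower_bound}--\ref{sec:upper_bound}, $D$ minimizes $\mathcal{E}$ in $\mathcal{D}_\rho$.

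Next I would exploit the scaling $\mathcal{E}(\lambda D) = \lambda \mathcal{E}(D)$ and $|\lambda D| = \lambda^2|D|$, which imply that the continuum minimum $m(V) := \min\{\mathcal{E}(D')\,:\, D'\subset\Rz^2\setminus S \text{ bounded of finite perimeter},\ |D'|=V\}$ equals $c_\ast \sqrt{V}$ for a constant $c_\ast>0$ (positivity using the strict positivity of the Winterbottom perimeter guaranteed by \eqref{dewetting_condition}). Strict concavity of $\sqrt{\cdot}$ then yields the strict subadditivity $m(V_1+V_2) < m(V_1) + m(V_2)$ whenever $V_1,V_2>0$. Arguing by contradiction, suppose that $\alpha_n := \#\widetilde{D}_{n,1}/n$ has a subsequential limit $\alpha \in (0,1)$. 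Applying the compactness and lower-bound machinery \emph{separately} to $\widetilde{D}_{n,1}$ and to the rest $\widetilde{D}_n \setminus \widetilde{D}_{n,1}$ (both still equibounded in $E_n$, after suitable horizontal translations), one obtains
\[
\liminf_{n\to\infty} E_n(\mu_{\widetilde{D}_n}) \;\geq\; m(\alpha/\rho) + m((1-\alpha)/\rho) \;>\; m(1/\rho),
\]
contradicting the minimality of $\widetilde{D}_n$ tested against the recovery sequence of a Winterbottom minimizer of volume $1/\rho$. Hence $\alpha_n \to 1$ along the full sequence.

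The main obstacle will be justifying this splitting: although $\widetilde{D}_n$ is almost connected, its distinct components may be within distance $q$ of each other, so $\widetilde{D}_{n,1}$ and $\widetilde{D}_n \setminus \widetilde{D}_{n,1}$ can still share a few $\partial\mathcal{L}_{FS}$-bonds. One has to argue that such shared interactions amount to only $O(1)$ bonds, and hence contribute $O(n^{-1/2})$ to $E_n$, negligible against the $O(1)$ gap produced by strict subadditivity. This is precisely where the horizontal spreading effected by $\mathcal{T}_2$ is essential: after the transformation, the number of bonds between distinct almost-connected components is controlled independently of $n$, so the separate lower bounds can be added cleanly, closing the argument.
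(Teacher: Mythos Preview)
Your overall strategy coincides with the paper's: reduce to an almost-connected minimizing sequence via $\mathcal{T}$, invoke compactness and $\Gamma$-convergence to reach a continuum minimizer, and then derive a contradiction from the scaling law $m(V)=c_\ast\sqrt{V}$ together with the strict subadditivity of $\sqrt{\cdot}$. There are, however, two genuine gaps.

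\textbf{The case $\alpha=0$.} Your contradiction hypothesis is that $\alpha_n:=\#\widetilde D_{n,1}/n$ has a subsequential limit $\alpha\in(0,1)$, but the negation of the theorem only gives $\alpha<1$; nothing you have said excludes $\alpha=0$. If every connected component carries a vanishing mass fraction, the split ``largest component / rest'' yields $m(0)+m(1/\rho)=m(1/\rho)$ and no contradiction. The paper handles this by \emph{not} splitting off a single component: it enumerates the components and, using that each has mass $\to 0$, finds an index $J_k$ with $\sum_{j\le J_k}\mu(\widehat D_{n_k,\ell_j})\in(1/3,2/3]$, then sets $\widetilde D_{n_k}^1:=\mathcal{T}(\cup_{j\le J_k}\widehat D_{n_k,\ell_j})$ and $\widetilde D_{n_k}^2:=\mathcal{T}(\text{rest})+t'q\tone$. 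This guarantees that both pieces retain a positive mass fraction in the limit, and $\mathcal{T}$ is reapplied separately so that compactness is available for each piece. You need an analogous regrouping step.

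\textbf{Positivity of $c_\ast$.} You assert $c_\ast>0$ ``using the strict positivity of the Winterbottom perimeter guaranteed by \eqref{dewetting_condition}'', but this is not immediate: $\sigma=2c_F-c_S/q$ can be negative, so $\mathcal{E}$ is not a priori a positive functional. The paper supplies the missing argument: by the generalized Gauss--Green formula $\int_{\partial^*D}\nu_D\,d\mathcal{H}^1=0$, hence $\int_{\partial^*D\cap\{x_2>0\}}\nu_D\,d\mathcal{H}^1=-\mathcal{H}^1(\partial^*D\cap\partial S)\,\tthree$, and Jensen applied to the convex one-homogeneous $\Gamma$ gives $\int_{\partial^*D\cap\{x_2>0\}}\Gamma(\nu_D)\,d\mathcal{H}^1\ge 2c_F\,\mathcal{H}^1(\partial^*D\cap\partial S)$, whence $\mathcal{E}(D)>0$ for any admissible $D$ of positive measure.

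Finally, your closing paragraph about ``shared $\partial\mathcal{L}_{FS}$-bonds'' between components is a non-issue: distinct connected components share \emph{no} film bonds by definition, and substrate bonds are one-body contributions attached to individual atoms, so $V_n(\widetilde D_n)=\widetilde V_{n_1}(\widetilde D_{n,1})+\widetilde V_{n-n_1}(\widetilde D_n\setminus\widetilde D_{n,1})$ exactly. The real reason you need $\mathcal{T}$ (reapplied to each piece, as above) is not to control cross-terms but to make each piece almost connected so that the compactness theorem applies to it.
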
	

\PPP We rigorously prove by $\Gamma$-convergence that the discrete models converge to the continuum model, and in view of the previous result (even in the lack of a direct compactness result for general sequences of minimizers, possibly not almost connected), we prove convergence (up to passing to a subsequence and up to translations) of the minimizers of the discrete models to a bounded minimizer of the continuum model, which in turns it is also proven to exist. \EEE

\begin{theorem}[Convergence of Minimizers]\label{thm:convergence_minimizers}
	\PPP Assume \eqref{dewetting_condition}. The following statements hold:
	\begin{itemize}
	\item[1.] The functional 
	\begin{equation}\label{converging_energies00}
	E_n:= n^{-1/2}(I_n+6c_F n),
	\end{equation}
	where $I_n$ is defined by \eqref{radon_functional}, $\Gamma$-converges with respect to the weak* convergence of measures to the functional $I_\infty$ defined by
		\begin{equation}
	I_\infty(\mu):=\begin{cases}  \mathcal{E}(D_\mu), &  \text{if \FFF there exists \EEE $D_\mu\subset\Rz^2\setminus S$  set of finite perimeter}\\
	& \hspace{15ex} \text{with  $|D_\mu|=1/\rho$ such that $\mu=\rho\chi_{D_\mu}$,}
\\%\text{if $\mu\in\mathcal{M}_W$},\UUU\\
	+\infty, &\text{otherwise,}\\
	\end{cases}
	\end{equation}
	for every $\mu\in\mathcal{M}(\Rz^2)$,
where $\rho:=2/\sqrt{3}$.
	 %, where 
%\begin{align*}
%	\mathcal{M}_W:=\bigg\{\mu\in\mathcal{M}(\Rz^2)\ :\ \text{$\exists$ $D\subset\Rz^2\setminus S$  set of finite perimeter, bounded, and with } & |D|=\frac{\sqrt{3}}{2}\\
%	& \hspace{-15ex} \text{such that $\mu=\frac{2}{\sqrt{3}}\chi_D$}\bigg\}.
%	\end{align*}
	\item[2.]  The functional $I_\infty$ admits a minimizer in 
	\begin{align}
\mathcal{M}_W:=\bigg\{\mu\in\mathcal{M}(\Rz^2)\ :\ \text{$\exists$ $D\subset\Rz^2\setminus S$} & \hspace{2ex} \text{set of finite  perimeter, bounded }\notag\\
\hspace{8ex} \text{with }  |D|=\frac{1}{\rho},\label{M_Wulff}	& \hspace{2ex} \text{ and such that $\mu=\rho\chi_D$}\bigg\}. 
	\end{align}
	\item[3.]  Every sequence $\mu_n\in\mathcal{M}_n$ of minimizers of $E_n$ %of crystalline configurations $D_n^m\subset\mathcal{L}_F$  of the functional \UUU
%	\begin{equation}\label{converging_energies}
%	E_n:= n^{-1/2}(I_n+6c_F n)
%	\end{equation}
admits, up to translation in the direction $\tone$ $($i.e., up to replacing $\mu_n$ with $\mu_n(\cdot+c_n\tone)$ for  chosen fixed integers $c_n\in\Zz$$)$, %\ZZZ(note that translations are horizontal now) \PPP 
a subsequence  converging  with respect to the weak* convergence of measures to a minimizer of $I_\infty$ in $\mathcal{M}_W$. 
	\end{itemize}
	\end{theorem}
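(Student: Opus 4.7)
The plan is to establish the three assertions by combining the $\Gamma$-convergence of the atomistic energies $E_n$ with the mass-conservation result of Theorem \ref{connectness} and with the compactness result for almost-connected energy-equibounded sequences proved in Section \ref{sec:compactness}. A consistent thread of the argument is to pass from arbitrary minimizers to almost-connected ones via the transformation $\mathcal{T}$ of Definition \ref{transformation}, so that compactness can be invoked.

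For assertion 1, I would prove the $\Gamma$-liminf inequality by fixing $\mu_n \stackrel{*}{\rightharpoonup} \mu$ with $\liminf E_n(\mu_n) < +\infty$ and working with the associated lattice configurations $D_n \in \mathcal{C}_n$. Following the strategy of \cite{Yeung-et-al12}, I would split the rescaled total bond deficit into a bulk contribution $\tfrac12 \sum_{x \in D_n} E_{\mathrm{loc}}(x)$, recovering in the limit the anisotropic surface term $\int_{\partial^* D_\mu \cap (\mathbb{R}^2 \setminus \overline S)} \Gamma(\nu_{D_\mu})\,d\mathcal{H}^1$ via the auxiliary Voronoi-based sets $H_n'$ and the lower semicontinuity of the anisotropic perimeter with density \eqref{gamma}; and a wall contribution controlled by the strip energies $E_{\mathrm{strip}}$ of Section \ref{local_energy}, which, after averaging the periodic adhesion deficit of $v^1$ over the oscillatory sets $O_n$, produces the additive substrate term $(2c_F - c_S/q)\,\mathcal{H}^1(\partial^* D_\mu \cap \partial S)$. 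The $\Gamma$-limsup inequality I would obtain by a recovery sequence constructed by discretizing a competitor $D_\mu$: take $D_n := (\sqrt{n}\, D_\mu) \cap \mathcal{L}_F$ with an $o(\sqrt n)$ boundary correction so that $\# D_n = n$, and verify by direct computation that $E_n(\mu_{D_n}) \to \mathcal{E}(D_\mu)$, first for polygonal $D_\mu$ with sides along the lattice directions $\tone$, $\ttwo$, $\ttwo - \tone$ and $-\tthree$ (for which \eqref{gamma} is exactly the count of missing bonds per unit length) and then extended by a density/relaxation argument.

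For assertion 3, given $\widehat{D}_n$ minimizers of $V_n$, I would first apply $\mathcal{T}$ to obtain almost-connected configurations $\mathcal{T}(\widehat{D}_n)$ which, by property (iii) of $\mathcal{T}$, are still minimizers and by properties (iv)--(v) differ from $\widehat{D}_n$ only by integer horizontal translations of the connected components. Theorem \ref{connectness} then ensures that, after a single integer translation $c_n\tone$ of the largest component $\widehat{D}_{n,1}$, the remaining almost-connected components carry $o(1)$ mass, so the translated empirical measures coincide in the weak* limit with those of $\mathcal{T}(\widehat{D}_n) + c_n\tone$. Applying the compactness result for almost-connected equibounded sequences of Section \ref{sec:compactness} yields a subsequential weak* limit $\mu = \rho\chi_D \in \mathcal{M}_W$ with $D$ bounded, of finite perimeter and $|D| = 1/\rho$. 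By the standard principle that limit points of sequences of minimizers along a $\Gamma$-converging family are minimizers of the $\Gamma$-limit, $\mu$ minimizes $I_\infty$, and this simultaneously establishes assertion 2 since the existence of a minimizer in $\mathcal{M}_W$ is obtained as a byproduct.

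The main obstacle will be coordinating the $\Gamma$-convergence estimates at the substrate interface with the homogenization of the periodic adhesion deficit carried by $v^1$. When $q \neq 1$ the deficit is genuinely non-constant along $\partial \mathcal{L}_{FS}$, and in the parameter window $4c_F \leq c_S < 6c_F$ the continuum dewetting criterion \eqref{dewetting_continuum} fails while the discrete one \eqref{dewetting_condition} holds, so the continuum homogenization framework of \cite{AlbDes} cannot be invoked directly on the auxiliary sets $H_n'$. Overcoming this requires the tailored averaging argument carried out via the strip decomposition of Section \ref{local_energy}, together with the use of $\mathcal{T}$ and Theorem \ref{connectness} to rule out the escape of mass along the infinite substrate that would otherwise obstruct both compactness of minimizers and existence of a bounded minimizer in $\mathcal{M}_W$.
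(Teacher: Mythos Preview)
Your proposal is correct and follows essentially the same route as the paper: assertion~1 is Theorem~\ref{Thm:Gamma_convergence} (liminf via the auxiliary sets $H_n'$, Reshetnyak away from $\partial S$, and the oscillatory-set averaging near $\partial S$ in Theorem~\ref{lowerbound}; limsup via the polygonal/lattice recovery of Theorem~\ref{thm:upperbound}), while assertions~2 and~3 are obtained exactly as you describe, through Corollary~\ref{Cor:trasformed_convergence} (apply $\mathcal{T}$, then compactness) combined with Theorem~\ref{connectness} to reduce to a single integer $\tone$-translation. One minor imprecision worth noting: in the paper the discrete strip energies $E_{\rm strip}$ of Section~\ref{local_energy} are used only for the compactness estimate (Lemmas~\ref{strip_local}--\ref{strip}), not directly in the liminf; the wall contribution in Theorem~\ref{lowerbound} is instead obtained by a measure-theoretic blow-up argument on $\partial S$ based on $H_n'$ and the oscillatory sets $O_n$, but since you invoke both ingredients your overall strategy matches.
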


\PPP
\noindent Notice that the parameter $\rho:=2/\sqrt{3}$ in the definition of $\mathcal{M}_W$ is related to the fact that we chose the triangular lattice for $\mathcal{L}_F$, as $\rho$ is the density of atoms per unit volume of such lattice. \EEE
%by can endow the class $X_n$ of configurations with $n$ particles with such equivalence relation $\mathcal{R}$ and
 \EEE

\section{\PPP Wetting regime}\label{sec:wetting}

In this section we \PPP single out \EEE conditions that entail  \emph{wetting}, i.e., the situation in which it is more convenient for film atoms to spread on the infinite substrate surface instead of accumulating in clusters, or islands, on top of it. \PPP In the following we refer to crystalline configurations $D^{\rm w}_n\subset\partial\mathcal{L}_{FS}$ as \emph{wetting configurations}. We first consider  the case  $q\neq 1$.  \EEE

 \begin{proposition}\label{wetting_regime1}
Let $\MMM q\neq 1$ \PPP and  $n \in \mathbb{N}$.  Any wetting configuration $D^{\rm w}_n:=\{w_1,\dots,w_n\}\subset\partial\mathcal{L}_{FS}$ \EEE %such that 
%\begin{equation}\label{connected_wetting}
%w_{i+1}:=w_i+\tone
%\end{equation}
% for $i=1,\dots,n$ 
satisfies \PPP the following two assertions: \EEE
\begin{itemize} 
\item[(i)] $V_n(D^{\rm w}_n)=\min{V_n(D_n)}$\PPP, \EEE
\item[(ii)] \PPP $V_n(D^{\rm w}_n)<V_n(D_n)$ \EEE for  \PPP any crystalline configuration $D_n$ with $D_n\setminus\partial\mathcal{L}_{FS}\neq\emptyset$,  %\ZZZ (I added strict inequality among all other configurations)\EEE
\end{itemize}
 if and only if 
\begin{equation}\label{wetting_condition1}
c_{S} \geq 6 c_{F}.
%\begin{cases}
%c_{S}>6 c_{F} & \textrm{if $r>1$},\\
%c_{S}>5 c_{F} & \textrm{if $r\leq1$}.
%\end{cases}
\end{equation}
  \end{proposition}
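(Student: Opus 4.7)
\medskip\noindent\textbf{Proof proposal.}
Writing $b(D_n)$ for the number of activated film-film bonds and $s(D_n) := \#(D_n \cap \partial \mathcal{L}_{FS})$ for the number of activated film-substrate bonds, definitions \eqref{V}--\eqref{substrate2} give $V_n(D_n) = -2 c_F b(D_n) - c_S s(D_n)$. Since $q \neq 1$ the sites of $\partial \mathcal{L}_{FS}$ are spaced at distance $\geq q \geq 2$ inside $\partial \mathcal{L}_F$, so any wetting configuration $D^{\rm w}_n \subset \partial \mathcal{L}_{FS}$ carries no film bonds while each of its atoms has exactly one substrate bond, yielding $V_n(D^{\rm w}_n) = -c_S n$. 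Under \eqref{wetting_condition1}, both (i) and (ii) then reduce to the combinatorial statement
\begin{equation} \label{plan_ineq}
b(D_n) \leq 3\bigl(n - s(D_n)\bigr) \quad \forall\, D_n \in \mathcal{C}_n, \text{ with equality iff } D_n \subset \partial \mathcal{L}_{FS},
\end{equation}
since then
$$V_n(D_n) - V_n(D^{\rm w}_n) \, = \, c_S\bigl(n - s(D_n)\bigr) - 2 c_F b(D_n) \, \geq \, 2 c_F \bigl(3(n-s(D_n)) - b(D_n)\bigr) \, \geq \, 0,$$
with strict inequality as soon as $D_n \not\subset \partial \mathcal{L}_{FS}$, which in particular covers the case $D_n \setminus \partial \mathcal{L}_{FS} \neq \emptyset$.

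\medskip I would prove \eqref{plan_ineq} by induction on $n$, peeling off a distinguished atom. If $D_n$ contains any atom strictly above $\partial \mathcal{L}_F$, let $x^{**} \in D_n$ be the one with largest second coordinate, with ties broken by largest first coordinate; by maximality the three lattice sites $x^{**} + \tone$, $x^{**} + \ttwo$, and $x^{**} + \ttwo - \tone$ are absent from $D_n$, so the number $N(x^{**})$ of film neighbors of $x^{**}$ is at most $3$. Since $x^{**} \notin \partial \mathcal{L}_{FS}$, removing $x^{**}$ decreases $n$ and $n - s(D_n)$ by one each and decreases $b(D_n)$ by $N(x^{**})$, so the inductive hypothesis applied to $D_n \setminus \{x^{**}\}$ closes the weak inequality. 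For strictness, the borderline case ``$N(x^{**}) = 3$ together with $D_n \setminus \{x^{**}\} \subset \partial \mathcal{L}_{FS}$'' is ruled out by noting that among the three surviving lattice neighbors of $x^{**}$ one has $x^{**} - \tone$, which shares its row with $x^{**}$ and therefore cannot lie in $\partial \mathcal{L}_F \supset \partial \mathcal{L}_{FS}$. The residual one-dimensional case $D_n \subset \partial \mathcal{L}_F$ is handled analogously using the rightmost atom: if it lies in $\partial \mathcal{L}_{FS}$ and has an active left bond, I remove the left neighbor instead, which, thanks to $q \neq 1$, automatically belongs to $\partial \mathcal{L}_F \setminus \partial \mathcal{L}_{FS}$ and has at most two film neighbors.

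\medskip For the necessity of \eqref{wetting_condition1}, suppose by contradiction that $c_S < 6 c_F$, and for each hexagonal number $n_k := 3k^2 + 3k + 1$ pick a hexagonal cluster $H_{n_k} \subset \mathcal{L}_F$ placed a few rows above $\partial \mathcal{L}_F$, so that $s(H_{n_k}) = 0$. The classical Heitmann--Radin bond-counting inequality for the triangular lattice \cite{Heitmann-Radin80} gives $b(H_{n_k}) = 3 n_k - O(\sqrt{n_k})$, whence
$$V_{n_k}(H_{n_k}) = -6 c_F n_k + O(\sqrt{n_k}) \, < \, -c_S n_k = V_{n_k}(D^{\rm w}_{n_k})$$
for all sufficiently large $k$, contradicting assertion (i) along the subsequence $(n_k)_k$ and a fortiori the stronger ``for every $n$'' statement.

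\medskip The main obstacle, I expect, is the strictness in \eqref{plan_ineq}: the weak inequality follows routinely from the ``peel the topmost-rightmost atom'' induction, but the equality analysis hinges on the geometric fact that $\partial \mathcal{L}_{FS} \subset \partial \mathcal{L}_F$, which must be traced carefully both in the 2D induction (when the extremal atom attains the maximal $N(x^{**}) = 3$) and in the 1D reduction (when the rightmost atom sits in $\partial \mathcal{L}_{FS}$).
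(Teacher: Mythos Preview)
Your proposal is correct and is essentially the paper's argument in a slightly different packaging. Both proofs proceed by induction, peeling off extremal atoms with at most three remaining film bonds, and both obtain necessity by comparing $V_n(D^{\rm w}_n)=-c_S n$ with a Wulff-type cluster of energy $-6c_F n+O(\sqrt n)$. The difference is that you first isolate the purely combinatorial statement $b(D_n)\le 3\bigl(n-s(D_n)\bigr)$ with equality iff $D_n\subset\partial\mathcal{L}_{FS}$, and then remove one topmost-rightmost atom at a time, propagating strictness through the inductive equality characterization; the paper instead strips the entire top row at once and gets strictness from the fact that the \emph{last} atom of that row has at most two bonds to what remains (yielding an explicit $2c_F$ gap). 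Your formulation makes the role of $q\neq 1$ (no film bonds inside $\partial\mathcal{L}_{FS}$) especially transparent, at the cost of a small extra case analysis in the one-dimensional situation $D_n\subset\partial\mathcal{L}_F$; the paper's row-removal avoids that bookkeeping but hides the combinatorics inside the energy inequalities.
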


\begin{proof}

\PPP We begin by proving the sufficiency of \eqref{wetting_condition1} for the assertions (i) and (ii). Note that (i) easily follows from (ii) and the fact that any wetting configuration $D^{\rm w}_n$  has the same energy given by 
\begin{equation}\label{wetting_energy}
V_n (D^{\rm w}_n)=-c_S n.
\end{equation}
In order to prove (ii) we proceed by induction on $n$. We first notice that (ii) is trivial for $n=1$. Then, we  assume that (ii) holds true for every $k=1,\dots,n-1$ and prove that it holds also for $n$. Let $D_n$ be a crystalline configuration such that $D_n\setminus\partial\mathcal{L}_{FS}\neq\emptyset$. If $D_n\cap(\mathbb{R}\times\{r>e_S \})=\emptyset$, we can easily see that the energy of $D_n$ is higher of the energy of $D^{\rm w}_n$ at least by $c_S-2c_F$, which is positive by  \eqref{wetting_condition1},  since the elements in $D_n\setminus\partial\mathcal{L}_{FS}\neq\emptyset$ have at most two film bonds and no substrate bonds. 
Therefore, we can assume that  $D_n\cap(\mathbb{R}\times\{r>e_S \})\neq\emptyset$.  Let $L$ be the last line in $\Rz\times\{r>0\}$ parallel to $\tone$ that intersects $D_n$ by moving upwards from $\Rz\times\{e_S\}$ (which exists since $D_n$ has a finite number of atoms). 

We claim that 
\begin{equation}\label{wetting_induction}
 V_n(D_n) \geq  V_{n-\ell}(D_n\backslash L)-6c_F(\ell-1)-4c_F,
\end{equation}
where $\ell:=\#(D_n \cap L)$. We order the element of $D_n \cap L$ with increasing indexes with respect to $\tone$, i.e., $D_n \cap L=\{x_1,\dots,x_\ell\}$, and observe that $x_1$ has at most 3 bonds with film atoms in $D_n$ by construction, since $x_1$ is the leftmost element in $D_n \cap L$.   We notice that in the same way, if $\ell>1$, every $x_i$ has at most 3 bonds with film atoms in $D_n\setminus\{x_1,\dots,x_{i-1}\}$ for every $i=2,\dots,\ell-1$. Therefore,  we obtain that
\begin{align*}
 V_n(D_n) &\geq  V_{n-1}(D_{n-1} \backslash \{x_1\})-6c_F\geq  V_{n-i}(D_{n-i}\backslash \{x_1,\dots,x_{i}\})-6c_Fi\\
 & \geq V_{n-(\ell-1)}(D_n\backslash \{x_1,\dots,x_{\ell-1}\})-6c_F(\ell-1) \geq  V_{n-\ell}(D_n\backslash L)-6c_F(\ell-1)-4c_F,
\end{align*}
which in turns is \eqref{wetting_induction}, where in the last inequality we used that $x_\ell$ has only at most 2 bonds with film atoms in $D_n\setminus L$, since $x_\ell$ is the rightmost element in $D_n \cap L$. 

From \eqref{wetting_induction} it follows that 
\begin{align*}
 V_n(D_n) & \geq  V_{n-\ell}(D_n\backslash L)-6c_F(\ell-1)-4c_F> V_{n-\ell}(D_n\backslash L)-6c_F\ell\\
 &> -c_S (n-\ell)-6c_F\ell\geq -c_Sn,
\end{align*}
where we used the induction and \eqref{wetting_energy}  in the third inequality, and \eqref{wetting_condition1} in the last inequality. 

 To prove the necessity of \eqref{wetting_condition1} notice that \MMM the Wulff configuration \PPP in $\Rz\times\{r>e_s\}$ has energy equal to $-6c_Fn+C\sqrt{n}$ \PPP for some constant \EEE $C>0$. Therefore, from assertion (ii) and \eqref{wetting_energy} it follows
$$\PPP-c_S n  <\EEE -6c_Fn +C \sqrt{n}. $$ 
After dividing by $n$ and letting $n \to \infty$ we obtain
$c_S \geq  6 c_F$.

%In this case for any configuration $D_n$ with two atoms $x$ and $y$ such that $|x-y|=1$ we have that at least one of them, say $y$, is not bonded to a substrate atom. Therefore, the energy of such configuration could be lowered by moving $y$  to a empty position $s_i+(0,r)$ for $i\in\Zz$ which always exists since $D_n$ occupies only a finite number of sites in $\mathcal{L}_F$. We can then iterate this process The energy is lowered because a atom not attached to the substrate can have least local energy $-6c_F$ which under the assumption of the theorem is larger then $-c_S$.

%If $r\leq1$, then we can first reduce to configurations $\bar{D}_n$ such that $\bar{D}_n\cap\{(s_k+(0,r))\,:\,k\in\Zz\}\neq\emptyset$ because by translating any configuration $D_n$ by $-(\bar{y}-1)\tone$ where  $\bar{y}:=\min\{y\in\Rz\,:\, (x^1,y)\in D_n\}$ we reduce the enerfy since a bond with the substrate is activated. Afterwards, we can argue as before just by ordering the atoms iteratively on the right of the atom of $\bar{D}_n$ with maximum $x$-coordinate. in this case we move a tom with at most 6 film bonds to a position when it obtains 2 film bonds and a substrate bond. So the energy is lowered as $c_{S}>4 c_{F}$.
\end{proof}

\FFF \begin{remark} 
Notice from the proof of Proposition \ref{wetting_regime1} that for the necessity of \eqref{wetting_condition1} it is enough assertion (i) or,  more precisely, it is enough that 
%. Notice that such necessity 
%The reverse implication in the Theorem \ref{wetting_theorem} 
%should be understood  in the following way: If 
there exists an increasing subsequence $(n_k)_{k \in \mathbf{N}}$ such that (i) holds for every $n_k$. 		
\end{remark} \EEE

\PPP We now address the case $q=1$ for which we notice that $\partial\mathcal{L}_{FS}=\partial\mathcal{L}_{F}$.\EEE

%\begin{remark}
%We also notice that in the case $r>1$ and $c_{S}>6 c_{F}$ properties \emph{(i)}  and \emph{(ii)} hold for any configurations $D^{\rm w}_n\subset\{(s_k+(0,r))\,:\,k\in\Zz\}$ (also not satisying \eqref{connected_wetting}).
%\end{remark}

 \begin{proposition}\label{wetting_regime2}
Let \PPP $q= 1$ and $n \in \mathbb{N}$. Any configuration  $D^{\rm w}_n:=\{w_1,\dots,w_n\}\subset\partial\mathcal{L}_{FS}$  such that 
%\begin{itemize}
%\item[1.] $w_i\in\{(s_k+(0,r))\,:\,k\in\Zz\}$ for $i=2,\dots,{n-1}$;
%\item[2.] $w_{i+1}:=w_i+\tone$  for $i=1,\dots,{n-2}$;
%\item[3.] $w_{1}:=w_2+\ttwo$ and $w_{n}:=w_{n-2}+\ttwo$, if $2c_F\leq c_S$,   or  $w_{1}:=w_2-\tone$ and $w_{n}:=w_{n-1}+\tone$, if $2c_F \geq c_S$;
%\end{itemize}
\begin{equation}\label{connected_wetting}
w_{i+1}:=w_i+\tone
\end{equation}
for every $i=1,\dots,{n}$,  satisfies \PPP the following two assertions: \EEE
\begin{itemize} 
	\item[(i)] $V_n(D^{\rm w}_n)=\min{V_n(D_n)}$,
	\item[(ii)] \PPP $V_n(D^{\rm w}_n)<V_n(D_n)$ \EEE for  \PPP any crystalline configuration $D_n$ such that either $D_n\setminus\partial\mathcal{L}_{FS}\neq\emptyset$ or not satisfying \eqref{connected_wetting}, % \ZZZ (I added strict inequality among all other configuration)\EEE
%\item[(ii)] $\PPP V_n(D^{\rm w}_n) <  V_n(D_n)$ \EEE for  \PPP any crystalline \EEE configuration $D_n$ with $D_n\cap\mathbb{R}\times\{r>e_S \}\neq\emptyset$\PPP, \EEE \ZZZ (we do not have strict inequality?)\EEE

\end{itemize}
if and only if  %the \emph{wetting condition} is verified
\begin{equation}\label{wetting_condition2}
c_{S}\geq 4 c_{F}.
%\begin{cases}
%c_{S}>6 c_{F} & \textrm{if $r>1$},\\
%c_{S}>5 c_{F} & \textrm{if $r\leq1$}.
%\end{cases}
\end{equation}

\end{proposition}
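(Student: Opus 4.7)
The plan is to proceed along the same inductive lines as in Proposition \ref{wetting_regime1}, but with a refined bond count that captures the new feature of the case $q=1$: consecutive atoms in $\partial\mathcal{L}_{FS}$ are at unit distance and therefore bond to each other. A direct count shows that any configuration $D^{\rm w}_n$ satisfying \eqref{connected_wetting} has energy
$$V_n(D^{\rm w}_n)=-2(n-1)c_F-nc_S,$$
where the factor $2$ appears because the double sum in \eqref{V} counts each bond twice. Since all such wetting configurations share this common energy, assertion (i) follows once (ii) is established.

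For the sufficiency of \eqref{wetting_condition2}, I would prove (ii) by strong induction on $n$, the case $n=1$ being immediate. Consider a configuration $D_n$ that either violates \eqref{connected_wetting} or satisfies $D_n\setminus\partial\mathcal{L}_{FS}\neq\emptyset$, and split into two subcases. First, if $D_n\subset\partial\mathcal{L}_{FS}$ but is not consecutive, then $D_n$ decomposes into $k\geq 2$ maximal consecutive segments along $\partial\mathcal{L}_{FS}$, giving directly $V_n(D_n)=-2(n-k)c_F-nc_S=V_n(D^{\rm w}_n)+2(k-1)c_F>V_n(D^{\rm w}_n)$. Second, if $D_n$ has atoms strictly above $\partial\mathcal{L}_{FS}$, let $L$ be the topmost horizontal line in $\Rz\times\{y>e_S\}$ that meets $D_n$, and set $\ell:=\#(D_n\cap L)\geq 1$. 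Since nothing lies above $L$, each atom of $L\cap D_n$ has at most four film neighbors in $\mathcal{L}_F$ (two on $L$ at $\pm\tone$ and two on $L-\ttwo$). Counting bonds incident to $L$ gives at most $\ell-1$ within $L$ and at most $2\ell$ between $L$ and $L-\ttwo$, for a total of at most $3\ell-1$; since $L$ contributes nothing to $E_S$, I obtain
$$V_n(D_n)\geq V_{n-\ell}(D_n\setminus L)-2c_F(3\ell-1)=V_{n-\ell}(D_n\setminus L)-6c_F\ell+2c_F.$$
If $D_n\setminus L=\emptyset$, then $V_n(D_n)\geq -2(n-1)c_F>V_n(D^{\rm w}_n)$ trivially. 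Otherwise, by the induction hypothesis $V_{n-\ell}(D_n\setminus L)\geq V_{n-\ell}(D^{\rm w}_{n-\ell})$, and combining the estimates yields
$$V_n(D_n)\geq V_n(D^{\rm w}_n)+\ell(c_S-4c_F)+2c_F>V_n(D^{\rm w}_n),$$
where the assumption $c_S\geq 4c_F$ is used in the nonnegative term $\ell(c_S-4c_F)$.

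For the necessity direction, I would test the putative minimality of $D^{\rm w}_n$ against a Wulff-type configuration $W_n$ sitting on $\partial\mathcal{L}_{FS}$. By the classical Heitmann--Radin count (applied as in \cite{Yeung-et-al12}) one has $V_n(W_n)=-6c_F n+O(\sqrt{n})$, since substrate adhesion contributes only an $O(\sqrt{n})$ boundary term. If $V_{n_k}(D^{\rm w}_{n_k})=\min V_{n_k}$ for a subsequence $n_k\to\infty$, the inequality
$$-2(n_k-1)c_F-n_kc_S\leq -6c_Fn_k+O(\sqrt{n_k})$$
after division by $n_k$ and passage to the limit forces $c_S\geq 4c_F$.

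The main subtlety I anticipate is ensuring \emph{strict} inequality in Case 2 at the boundary value $c_S=4c_F$. This is delivered precisely by the $+2c_F$ term, which reflects the fact that the rightmost atom of $L\cap D_n$ has no right neighbor inside $L$, so the bond count is sharp at $3\ell-1$ rather than $3\ell$. Getting this constant right (and verifying that the induction can legitimately be applied when $D_n\setminus L$ fails to be a wetting configuration, in which case the induction yields strict inequality, to which the nonnegative contributions are then added) is the most delicate bookkeeping in the argument; the rest is parallel to the proof of Proposition \ref{wetting_regime1}.
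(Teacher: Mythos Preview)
Your proposal is correct and follows essentially the same approach as the paper: compute the energy $V_n(D^{\rm w}_n)=-2(n-1)c_F-nc_S$, reduce via the top-line removal argument of Proposition~\ref{wetting_regime1} (your bound $3\ell-1$ on the bonds incident to $L$ is exactly the paper's $6c_F(\ell-1)+4c_F$ rewritten), and test necessity against the Wulff configuration. The paper is terser---it simply invokes the arguments of Proposition~\ref{wetting_regime1} and notes that non-consecutive configurations in $\partial\mathcal{L}_{FS}$ are ``obviously'' worse---whereas you spell out Case~1 and the induction bookkeeping explicitly, but the structure is identical.
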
 
\begin{proof}
\PPP The proof is based on the same arguments employed for Proposition \ref{wetting_regime1} and on the following observations. Any wetting configuration $D^{\rm w}_n$ satisfying \eqref{connected_wetting} has the same energy given by 
\begin{equation}\label{wetting_energy2}
V_n (D^{\rm w}_n)=-c_S n -2c_F(n-1).
\end{equation}

In order to prove the sufficiency of \eqref{wetting_condition2} for assertion (ii) (assertion (i) follows in view of \eqref{wetting_energy2}), we can restrict also in this case without loss of generality to configurations $D_n\cap(\mathbb{R}\times\{r>e_S \})\neq\emptyset$, since any wetting configuration that does not satisfy \eqref{connected_wetting} has energy obviously higher than \eqref{wetting_energy2} (because $n-1$ is the maximum number of bonds in $\partial\mathcal{L}_{F}$). 

In order to prove the necessity of \eqref{wetting_condition2} for assertions (i) and (ii), we again consider the Wulff shape with $n$ atoms in $\Rz\times\{r>e_s\}$ which has energy $-6c_Fn+C\sqrt{n}$ for some constant $C>0$, and observe that  
$$-c_S n -2c_F(n-1) < -6c_Fn +C \sqrt{n} $$ 
by assertion (ii) and \eqref{wetting_energy2}.

%To prove (i) it is easy to see that the claim is valid for $n=1,2,3$. Notice that the wetting energy is given by $(-2c_F-c_S) (n-2)-2c_F-2c_S$, for $n\geq 3$. The proof goes in the same way as the proof of Proposition \ref{wetting_regime1}. 
\end{proof}

\FFF \begin{remark} 
Notice from the proof of Proposition \ref{wetting_regime2} that for the necessity of \eqref{wetting_condition2} it is enough assertion (i) or,  more precisely, it is enough that 
%. Notice that such necessity 
%The reverse implication in the Theorem \ref{wetting_theorem} 
%should be understood  in the following way: If 
there exists an increasing subsequence $(n_k)_{k \in \mathbf{N}}$ such that (i) holds for every $n_k$. 		
\end{remark} \EEE	

\PPP We refer to \eqref{wetting_condition1} and \eqref{wetting_condition2} as \emph{wetting conditions}.  \EEE
%\begin{remark} \label{example}
\PPP Condition \eqref{wetting_condition2} is weaker than \eqref{wetting_condition1} because if $q=1$, then film atoms of wetting configurations can be bonded to the two film atoms at their sides in $\partial\mathcal{L}_{FS}$ (if filled) besides to their corresponding substrate atom, and Proposition \ref{wetting_regime2} show that such configuration are preferable. 
%only of the film can be at the same time in contact with two other atoms of film and the atom of the substrate (this is valid for all atoms of the film, except the first and the last in line, which can be in contact with the atom of the substrate and one atom of the film). However, if we take that $e_S$ is not of the form $\frac{1}{n}$  (e.g. $e_S=\frac{2}{3}$) then this is again not possible and the analysis would then follow the case $\MMM q\neq 1 \EEE$  with the wetting condition given by \eqref{wetting_condition1}. 
We notice that the same arguments used in Propositions \ref{wetting_regime1}  and \ref{wetting_regime2} work for other rigid positioning of $\mathcal{L}_F$ and  $\mathcal{L}_S$. For example for the case with  
$$
e_S:=\frac{3}{4}\qquad\text{ and}\qquad x_F:=\left(-\frac{1}{8},\frac{1}{8}\sqrt{35}\right) %\text{\qquad\ZZZ (check {\color{pink} INPUT FIGURE!!!} ) \PPP}
$$
 the wetting condition  \eqref{wetting_condition1} is replaced by $c_S \geq 5c_F$, since film atoms in wetting configurations may present a film bond besides the substrate bond. 
%\end{remark} 	

\begin{proof}[Proof of Theorem \ref{wetting_theorem}]
The assertion directly follows from Proposition  \ref{wetting_regime1} and Proposition  \ref{wetting_regime2} for the case $q\neq1$ and the case $q=1$, respectively.
\end{proof} \EEE

\section{Compactness}\label{sec:compactness}

\PPP In the remaining part of the paper we work in the dewetting regime, i.e., under the assumption  \eqref{dewetting_condition}. \EEE
We begin by \PPP establishing a lower bound in terms of $c_F$ and $c_S$ \PPP of the \EEE strip energy $E_{\rm strip}(x)$ uniform for \PPP every \EEE $x\in D_n\cap\partial \mathcal{L}_F$. To this aim, \PPP we need to distinguish the case $q= 1$ from $q\neq1$ as already done in Section \ref{sec:wetting} because of the different contributions in $E_{\rm strip}(x)$ of the substrate interactions. \EEE

 \begin{lemma}\label{strip_local}
 We have that 
$$%\begin{equation}\label{strip_bound}
E_{\rm strip}(x)\geq %-6c_f \#\mathcal{S}(x) +
\Delta_{\rm strip}
$$%\end{equation}
with

\begin{equation}\label{delta_strip}
\Delta_{\rm strip}:=\begin{cases} 6c_F-c_S, &\text{if $\MMM q\neq 1 \EEE$,}\\
4c_F-c_S, &\text{if $\MMM q= 1 \EEE$},
\end{cases}
\end{equation}

for every $x\in D_n\cap\partial \mathcal{L}_F$.
 \end{lemma}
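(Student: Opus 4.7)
My plan is to establish the strip-energy lower bound by an exhaustive case analysis driven by two structural observations. First, any atom $y\in D_n\cap\partial\mathcal{L}_F$ lies on the bottom row of $\mathcal{L}_F$ and therefore has at most $4$ first neighbours in $\mathcal{L}_F$, which forces $E_{\rm loc}(y)\geq 2c_F$. Second, by maximality of $y_M$ the triangular-lattice site at $(x^1,y_M+\sqrt{3})$ is not in $D_n$, and this absent site is a common upper-diagonal first neighbour of both $\tilde x_+$ and $\tilde x_-$, so any of them that does lie in $D_n$ has at least one missing film bond.

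For $q\neq 1$ all strip weights equal $1$ and the target is $6c_F-c_S$. I would first treat the case $\tilde x=x$ (column of $x$ empty above $x$), splitting further according to how many of $\tilde x_+,\tilde x_-$ belong to $D_n$. Writing $a,b,c,d\in\{0,1\}$ for the membership indicators of $x_+,x_-,\tilde x_+,\tilde x_-$, one has $E_{\rm loc}(x)=(6-a-b-c-d)c_F$. When $c=d=0$, combining this with $\tfrac{1}{2}E_{\rm loc}(x_\pm)\geq ac_F,\,bc_F$ (from the first observation) yields exactly $6c_F-c_S$. When exactly one of $\tilde x_\pm$ lies in $D_n$, that atom misses both its lateral partner $\tilde x_\mp$ and the shared absent upper neighbour, so $E_{\rm loc}(\tilde x_\pm)\geq 2c_F$, compensating the unit reduction of $E_{\rm loc}(x)$. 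When both $\tilde x_\pm$ lie in $D_n$, each contributes $\geq c_F$ via the second observation and the sum again reaches $6c_F-c_S$. In the case $\tilde x\neq x$ the same trichotomy is run at the top of the strip, with $E_{\rm loc}(\tilde x)\geq 2c_F$ taking over the role of the unit reduction of $E_{\rm loc}(x)$ when $\tilde x_\pm\notin D_n$; the only new subtlety is that the two upper first neighbours of $x$ (which are \emph{not} part of the strip) may also belong to $D_n$ and reduce $E_{\rm loc}(x)$, but the reduction is absorbed by the $2c_F$ lower bounds on $E_{\rm loc}(x_\pm)$ together with the sharing at $\tilde x_\pm$.

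For $q=1$ the lower triple of weights becomes $(\tfrac{1}{2},\tfrac{1}{4},\tfrac{1}{4})$, the upper weights $w_\pm(\tilde x)$ may drop to $\tfrac{1}{2}$ precisely when $\tilde x_\pm=\widetilde{(x_\pm)}_\mp$, and the target is $4c_F-c_S$. The case-by-case verification is identical in structure, each partial contribution being scaled by the corresponding reduced weight; whenever $w_\pm=\tfrac{1}{2}$, the adjacent strip $\mathcal{S}(x_\pm)$, which exists because $x_\pm\in\partial\mathcal{L}_{FS}$ is itself a strip centre, supplies the matching half-weight, and the first observation applied at $x_\pm\in\partial\mathcal{L}_F$ provides the $c_F$ needed per unit weight on the lower side.

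The main obstacle is the subcase $\tilde x\neq x$ in which $\tilde x_\pm$, $x_\pm$ and both upper first neighbours of $x$ simultaneously belong to $D_n$: here no individual $E_{\rm loc}$ exceeds $c_F$ except through the boundary observation at $x$ and $x_\pm$, and the target $6c_F-c_S$ is attained with no slack. Verifying it reduces to an affine inequality in the six $\{0,1\}$ indicator variables encoding the memberships of $x_\pm$, the two upper first neighbours of $x$, and $\tilde x_\pm$, which I expect to be a routine case-check once the two observations above are applied consistently.
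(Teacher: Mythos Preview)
Your overall architecture coincides with the paper's: show that the ``below'' triple $(x,x_+,x_-)$ contributes at least $4c_F$ (respectively $2c_F$ when $q=1$) of missing-bond energy, and that the ``above'' triple $(\tilde x,\tilde x_+,\tilde x_-)$ contributes an additional $2c_F$, via your two structural observations. For $q\neq 1$ your case split is a slightly more explicit version of the paper's argument (note, however, that the \emph{below} weights are $1,\tfrac12,\tfrac12$, not ``all equal to $1$''; you use them correctly later, so this is only a wording slip).

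Where your proposal has a genuine gap is the $q=1$ case, in two places. First, the sentence ``whenever $w_\pm=\tfrac12$, the adjacent strip $\mathcal S(x_\pm)$ supplies the matching half-weight'' is a statement about the \emph{sum} of strip energies, not about $E_{\rm strip}(x)$ itself; the lemma is a pointwise bound, so borrowing from $\mathcal S(x_\pm)$ is not allowed. The paper's remedy is different and local: when $w_\pm=\tfrac12$ one has $\tilde x_\pm=\widetilde{(x_\pm)}_\mp$, hence by maximality of $\widetilde{(x_\pm)}$ the site $\tilde x+2\boldsymbol t_2$ (respectively $\tilde x+2\boldsymbol t_2-2\boldsymbol t_1$) is absent, so $\tilde x_\pm$ misses a \emph{second} bond and $E_{\rm loc}(\tilde x_\pm)\geq 2c_F$; with the halved weight this still yields $c_F$, and the total is unchanged. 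You need this extra missing-bond argument, not a transfer between strips.

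Second, ``each partial contribution being scaled by the corresponding reduced weight'' does not carry over uniformly. In the subcase $\tilde x=x$ with $\tilde x_\pm\notin D_n$, the $2c_F$ you attribute to ``$\tilde x$ misses two bonds'' sits in $E_{\rm loc}(x)$, which for $q=1$ carries weight $\tfrac12$, so it only delivers $c_F$. Your bound $E_{\rm loc}(x_\pm)\geq 2c_F$ then yields at most $3c_F-c_S$, not $4c_F-c_S$. The paper's sentence ``the terms related to the triple $\tilde x,\tilde x_+,\tilde x_-$ give a contribution of at least $2c_F$'' glosses over this same subcase, so neither argument as written is complete there; you cannot simply invoke scaling, and some further use of the fact that the absence of $\tilde x_\pm$ forces extra missing bonds at $x_\pm$ (and possibly a sharper count) is needed to close this gap.
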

   \begin{proof}
 %Assume $\MMM q\neq 1 \EEE$ {\color{pink} and $\tilde x \neq x$ (in either of these cases the proof is simpler)}.  
\FFF Fix \EEE $x\in D_n\cap\partial \mathcal{L}_F$. %such that $v^1(x)\neq0$. 
We begin by observing that the strip center $x$ surely misses the bonds with the atoms missing at the 2 positions $x-\ttwo+k\tone$ for $k=0,1$ \FFF as shown in Figure \ref{fig:strip}\EEE. Furthermore, either $x$ misses the bond with $x_{-}$ or $x_{-}\in {\color{black} D_n} $  and $x_{-}$ misses the bonds with the 2 positions $x-\ttwo+k\tone$ for $k=-1,0$ (which in the strip energy are counted with half weights). We can reason similarly for $x_{+}$. Therefore, by the definition of energy of the low strip $E_{\rm strip,below}$, 
$$
E_{\rm strip,below}\FFF \geq\EEE\begin{cases}
 4c_F-c_S, &\text{if $q\neq1$,}\\
 2c_F-c_S, &\text{if $q=1$,}
\end{cases}
$$
%if $\MMM q\neq 1 \EEE$, 
%we {\color{black} $4c_f$}. {\color{black} In the case when $e_S \leq 1$ we have the contribution of at least $2c_F$}. 

We analyze $E_{strip, above}$. There are several possibilities:
	\begin{enumerate} 
	\item neither of $\tilde{x}_+$ and $\tilde{x}_-$ belongs to $D_n$; 
	\item  exactly one of $\tilde{x}_+$ and $\tilde{x}_-$ belongs to $D_n$; 
	\item both  $\tilde{x}_+$ and $\tilde{x}_-$ belong to $D_n$. 		
\end{enumerate} 		
In case of  (1) we have the contribution of $2c_F$ since $\tilde x$ misses two bonds. In case of (3) each of $\tilde{x}_+$ and $\tilde{x}_-$ misses at least one bond (namely with $\tilde{x}+2\ttwo-\tone$ which is not in $D_n$ due to the definition of $\tilde x$). If $\tilde{x}_{\pm}\neq \widetilde{(x_{\pm})}_{\mp}$ we have the energy contribution of at least $2c_F$. On the other hand if it is valid that  $\tilde{x}_{\pm}= \widetilde{(x_{\pm})}_{\mp}$, we have the energy contribution of $c_F$ due to the missing bond with $\tilde{x}+2\ttwo-\tone$ and each of $\tilde{x}_{\pm}$ misses one more bond (namely with $\tilde{x}+2\ttwo$ and $\tilde{x}+2\ttwo-2\tone$, which in this case do not belong to $D_n$). The similar analysis can be made if $\tilde{x}_{+}= \widetilde{(x_{+})}_{-}$ or $\tilde{x}_{-}= \widetilde{(x_{-})}_{+}$. 
Thus we have again energy defficiency of $2c_F$. 
Finally in the case of (ii) 
without loss of generality we assume that $\tilde x_{+} \in D_n$. 
$\tilde x$ is already missing one bond (one $c_F$) and again one bond of $\tilde x_{+}$ is missing since $\tilde{x}+2\ttwo-\tone$ is not in $D_n$. Again, this bond is counted as one $c_F$, if $\tilde{x}_{+}\neq  \widetilde{(x_{+})}_{-}$ and as $c_F/2$, if $\tilde{x}_{+}\neq  \widetilde{(x_{+})}_{-}$. In this case one more  $c_F/2$ we obtain since $\tilde{x}_{+}$ is missing one bond with $\tilde{x}+2\ttwo$. 

Therefore, in the strip energy $E_{\rm strip}$ the terms related to the triple $\tilde{x}$, $\tilde{x}_+$, and $\tilde{x}_-$ give a contribution of at least $2c_F$.

   \end{proof}

We  now observe that the energy $V_n(D_n)$ of any crystalline configuration $D_n$ is bounded below by  $-6c_Fn$ plus a positive \emph{deficit} due to the boundary of $D_n$ where atoms have less than 6 film bonds \PPP and \EEE could have a bond with the substrate.  
  %In the following lemma we improve this lower bound by considering not only that atoms at the boundary $\partial D_n$ have at least one bond missing, but also that atoms at $\partial D_n\cap\partial \mathcal{L}_F$ may be bonded to substrate atoms.

 \begin{lemma}\label{strip}
 If \eqref{dewetting_condition} holds, then there exists $\Delta>0$ such that
\begin{equation}\label{Vn_lower_bound}
 V_n(D_n)\geq -6c_Fn\,+\,\Delta\#\partial D_n
\end{equation}
 for every crystalline configuration $D_n\subset\mathcal{L}_F$. \PPP Furthermore, the following two assertions are equivalent:
 \begin{itemize}
  \item[(i)] There exists a constant $C>0$ such that $\# \partial D_n \leq C \sqrt n$ for every $n \in \mathbb{N}$,
 \item[(ii)]  There exists a constant $C'>0$ such that $E_n (\mu_{D_n}) \leq C'$ for every $n \in \mathbb{N}$.
 \end{itemize}

  \end{lemma}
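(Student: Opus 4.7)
The plan is to write $V_n(D_n)+6c_F n=\sum_{x\in D_n} E_{\rm def}(x)$ and combine the strip lower bound of Lemma~\ref{strip_local} with the obvious bound $E_{\rm loc}(y)\ge c_F$ on boundary atoms. Set $B:=D_n\cap\partial\mathcal{L}_{FS}$ and expand
\[
\sum_{x\in B} E_{\rm strip}(x)=\sum_{y\in D_n}\alpha_y\, E_{\rm loc}(y)-c_S\#B,
\]
where the coefficients $\alpha_y\in[0,1]$ arise from \eqref{Estrip_below1}--\eqref{weigths_above} and vanish on atoms not touched by any strip $\mathcal{S}(x)$, $x\in B$. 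Since $\sum_{x\in D_n}v^1(x)=-c_S\#B$, this identity rewrites as
\[
V_n(D_n)+6c_F n=\sum_{y\in D_n}(1-\alpha_y)E_{\rm loc}(y)+\sum_{x\in B}E_{\rm strip}(x).
\]
Applying Lemma~\ref{strip_local} to the second summand and splitting the first according to whether $y$ belongs to the \emph{strip-covered} set $\mathcal{A}:=\bigcup_{x\in B}\mathcal{S}(x)$ or to $\mathcal{B}:=\partial D_n\setminus\mathcal{A}$, I would obtain (using $\alpha_y=0$ and $E_{\rm loc}(y)\ge c_F$ on $\mathcal{B}$, and discarding the nonnegative contributions from $\mathcal{A}$)
\[
V_n(D_n)+6c_F n\ \ge\ c_F\,\#\mathcal{B}+\Delta_{\rm strip}\#B.
\]
A trivial count gives $\#\mathcal{A}\le 6\#B$ (each strip contains at most six sites), so $\#\partial D_n\le 6\#B+\#\mathcal{B}$, and \eqref{Vn_lower_bound} follows with $\Delta:=(6/\Delta_{\rm strip}+1/c_F)^{-1}>0$ by the dewetting condition \eqref{dewetting_condition}.

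For the equivalence (i)$\Leftrightarrow$(ii), I would note that by \eqref{energy_equivalence} and \eqref{converging_energies00},
\[
E_n(\mu_{D_n})=n^{-1/2}\bigl(V_n(D_n)+6c_F n\bigr).
\]
The implication (ii)$\Rightarrow$(i) is then immediate from the bound \eqref{Vn_lower_bound} just established. For (i)$\Rightarrow$(ii) I would use that $E_{\rm def}(y)\le 6c_F$ uniformly in $y$ (since $E_{\rm loc}(y)\le 6c_F$ and $v^1(y)\le 0$), while $E_{\rm def}(y)=0$ whenever $y$ is an interior atom of $D_n$ lying outside $\partial\mathcal{L}_F$; noting that every element of $\partial\mathcal{L}_{FS}\cap D_n$ automatically belongs to $\partial D_n$ (missing at least two bonds below), the nonzero terms in $\sum_y E_{\rm def}(y)$ are supported on $\partial D_n$, so $V_n(D_n)+6c_F n\le 6c_F\#\partial D_n\le 6 c_F C\sqrt n$, which gives (ii).

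The main obstacle, and what most of the write-up would be devoted to, is verifying the weight bound $\alpha_y\le 1$ from the definitions \eqref{Estrip_below1}--\eqref{weigths_above}. In the below-part for $q\ne 1$, the relevant atoms $x_\pm$ of two distinct strips can coincide only when $q=2$, and the $1/2$ coefficient in $E_{\rm strip, below}$ is tailored to keep the total weight at most~$1$; for $q=1$ the weights $1/2,1/4,1/4$ handle analogously the case of three consecutive strips sharing atoms. In the above-part, each column has its own top $\tilde x$ so the weight on $E_{\rm loc}(\tilde x)$ is automatically $1$, while the coefficients $w_\pm(\tilde x)\in\{1/2,1\}$ in \eqref{weigths_above} are precisely designed to cancel overcounting in the only way neighboring strips can share $\tilde x_\pm$, namely $\tilde x_\pm=\widetilde{(x_\pm)}_\mp$; the special case $\tilde x=x$ in \eqref{Estrip_above} is also consistent since then $E_{\rm loc}(\tilde x)$ is already carried in $E_{\rm strip, below}$. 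Once this bookkeeping is in place, the rest of the argument is routine.
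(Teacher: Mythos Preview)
Your argument is correct and follows essentially the same route as the paper: split $V_n(D_n)+6c_Fn$ into the strip contributions (bounded below via Lemma~\ref{strip_local}) plus the residual local energies on $\partial D_n\setminus\mathcal{S}(\partial\mathcal{L}_{FS})$, then combine using $\#\mathcal{S}(\partial\mathcal{L}_{FS})\le 6\#B$; the equivalence (i)$\Leftrightarrow$(ii) is handled in the same way. The only cosmetic differences are that you phrase the decomposition as an exact identity with coefficients $\alpha_y$ (and spell out the verification $\alpha_y\le 1$ that the paper attributes to ``the careful choice of the weights''), and your constant $\Delta=(6/\Delta_{\rm strip}+1/c_F)^{-1}$ is slightly smaller than the paper's $\min\{\Delta_{\rm strip}/6,\,c_F\}$, but either suffices for the statement.
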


 \begin{proof}
We begin by observing that from \eqref{Eloc} and \eqref{Estrip} it follows that
\begin{align}
6c_Fn\,+\,
V_n(D_n)\,&=\, \sum_{x\in D_n}\left(\sum_{y\in {\color{black} D_n}\setminus\{x\}} v_{FF}(|x-y|)+ 6c_F
\right)\,+\,\sum_{x\in D_n}  v^1(x)\notag\\
&=\, \sum_{x\in D_n} E_{\rm loc}(x)\,+\,\sum_{x\in D_n}  v^1(x)\notag\\
&\geq \, \sum_{x\in D_n\cap {\color{black} \partial \mathcal{L}_{FS}}} E_{\rm strip}(x)\,+\,\sum_{x\in D_n\setminus\mathcal{S}({\color{black} \partial \mathcal{L}_{FS}})} E_{\rm loc}(x),\label{energy_strip}
\end{align}
where
\begin{equation}\label{layer_strip}
\mathcal{S}(\partial \mathcal{L}_{FS})=\mathcal{S}_{\MMM D_n\EEE}({\color{black} \partial \mathcal{L}_{FS}}):=\{y\in\mathcal{S}(x)\,:\,x\in D_n\cap{\color{black} \partial \mathcal{L}_{FS}} \},
\end{equation}
because  $v^1(x)=0$ for every $x\in D_n\setminus\partial \mathcal{L}_{FS}$ and the careful choice of the weights in \FFF \eqref{Estrip_below1},  \eqref{Estrip_below2}, and \eqref{Estrip_above} with \eqref{weigths_above}. \EEE More precisely, we notice that for every point in $D_n\cap {\color{black} \partial \mathcal{L}_{FS}}$ the local energy $E_{\rm loc}(x)$ is counted {\color{black} at most once}. The weights $c_{\pm}(\tilde{x}_{\pm})$ are instead chosen so that the local energy of $\tilde{x}_{\pm}$ is fully counted if $\tilde{x}_{\pm}$ do not belong to the next strip and only half in the other case. {\color{black} Thus these weights are also at most one}. 
We now observe that 
\begin{equation}\label{lower_bound_1}
\sum_{x\in D_n\setminus\mathcal{S}(\FFF \partial  \mathcal{L}_{FS}\EEE)} E_{\rm loc}(x)\geq %-6c_F (n-\mathcal{S}({\color{black} \partial \mathcal{L}_{FS}})) +
c_F \#(\partial D_n\setminus\mathcal{S}({\color{black} \partial \mathcal{L}_{FS}}))
\end{equation}
because every point in $D_n\setminus\mathcal{S}(\partial \mathcal{L}_{FS})$ has 6 bonds if not on the $\partial D_n$ where at least one bond is missing by definition.

Therefore, by \eqref{energy_strip}, \eqref{lower_bound_1}, and Lemma  \ref{strip_local}  we obtain that
\begin{align}
%6c_Fn\,+\,
6c_F n\,+\, V_n(D_n)\,&\geq \, \sum_{x\in D_n\cap{\color{black} \partial \mathcal{L}_{FS}}} E_{\rm strip}(x)\,+\,\sum_{x\in D_n\setminus\mathcal{S}({\color{black} \partial \mathcal{L}_{FS}} )} E_{\rm loc}(x),\notag\\
&\geq \Delta_{strip}  \#(D_n\cap {\color{black} \partial \mathcal{L}_{FS}}) \,+\, 
c_F \#(\partial D_n\setminus\mathcal{S}({\color{black} \partial \mathcal{L}_{FS}}))\notag\\
&\geq \min\left\{\frac{\Delta_{strip}}{6}, c_F\right\}\#\partial D_n\label{lower_bound_3}
\end{align}
where in the last inequality we used that $\MMM\#\mathcal{S}(\partial \mathcal{L}_{FS})\leq6\#D_n \cap \partial\mathcal{L}_{FS}\EEE$. The assertion now easily follows from \eqref{lower_bound_3} by choosing  
$$
\Delta:=\min\left\{\frac{\Delta_{strip}}{6}, c_F\right\}>0,
$$
\MMM where we used \eqref{dewetting_condition}. \EEE

\PPP To prove the last assertion we observe that assertion (i) implies (ii) since by \eqref{energy_equivalence} and \eqref{converging_energies00} %{\color{pink} label changed!!!}
$$
\sqrt{n}E_n(\mu_{D_n})=V_n(D_n)+6c_Fn\leq 6c_F\#\partial D_n,
$$
where in the last equality we used the definition of $\partial D_n$. Furthermore, also by \eqref{Vn_lower_bound},
$$
\Delta\#\partial D_n\leq V_n(D_n)+6c_Fn=\sqrt{n}E_n(\mu_{D_n})
$$
and hence, assertion (ii) implies (i). \EEE

\EEE
 \end{proof}

\PPP In view of the previous lower bound for the energy of a configuration $D_n$ we are now able to prove a compactness results.  \PPP We notice that to achieve compactness the negative contribution coming at the boundary from the interaction with the substrate needs to be compensated. This is not trivial, e.g., in the case $6c_F>c_S>4c_F$, where atoms $x$ of configurations on $\partial\mathcal{L}_{FS}$ have one bond with a substrate atom and at least two bonds with film atoms  missing. A way to solve the issue is to look for extra positive contributions from other atoms in the boundary. However, just looking for neighboring atoms might be not enough, e.g., in the case with $e_S=2$ or  $e_S=\frac{2}{3}$.  The issue is solved in the proof of the following compactness result by introducing a new non-local argument called the \emph{strip argument} that involve looking at the whole strip $\mathcal{S}(x)$. The same argument would work  for other rigid positioning of $\mathcal{L}_F$ and  $\mathcal{L}_S$, such as for  
$$
e_S:=\frac{3}{4}\qquad\text{ and}\qquad x_F:=\left(-\frac{1}{8},\frac{1}{8}\sqrt{35}\right).
$$
\EEE
 %\begin{remark} \label{comment2} 
%It is obvious that in the case $6c_F>c_s>4c_F$ if we want to obtain the compactness result it is not enough to look every atom at the boundary separately, since the atoms that are in the contact with the substrate atom may have at most two \FFF neighbors\EEE missing. Thus we need to conclude compactness look from the other atoms at the boundary for positive contribution of the energy which will compensate the negative contribution of the energy that comes for the atom which is in contact with the substrate atoms. One can look for compensation among the neighbouring atoms, but in some cases this might not be enough (the examples include $e_F=1$, $e_S=2$, $x_F=(0,2)$, or $e_F=1$, $e_S=\frac{2}{3}$, $x_F=(0,\frac{2}{3})$, or $e_F=1$, $e_S=\frac{3}{4}$, $x_F=(-\frac{1}{8},\frac{1}{8}\sqrt{35})$. In these cases one is forced to look for non-local argument and the one presented here we call the argument of the strip.  
 %\end{remark}
 
  %  {\OOO \begin{remark} \label{equivalence}
	%	From Lemma \ref{strip} it is easily seen that if condition \eqref{dewetting_condition} holds then	
	%	\begin{equation} \label{equivalencest} 
%		\exists C \in \mathbb{R}, \textrm{ such that } \forall n \in \mathbb{N}, \quad  	\# \partial D_n \leq C \sqrt n \Leftrightarrow  \exists C \in \mathbb{R}, \textrm{ such that } \forall n \in \mathbb{N}, \quad  	E_n (\mu_{D_n}) \leq C.
%		\end{equation} 
%	\end{remark} 
%}

\PPP We conclude the section with compactness results for sequences of almost-connected configuration (see Section \ref{sec:almost_connected} for the definition). We remind the reader that by the trasformation defined in Definition \ref{transformation} for any configuration $D_n$ there exists the almost-connected configuration $\widetilde{D}_n$ such that $V_n(\widetilde{D}_n)\leq V_n(D_n)$. \EEE

 \begin{proposition} \label{zadnje} 
\PPP Assume that \eqref{dewetting_condition} holds. Let  $D_n\in\mathcal{C}_n$ be almost-connected configurations such that
\begin{equation}\label{upperbound} 
V_n(D_n)\leq -6c_Fn +Cn^{1/2}
\end{equation}
for a constant $C>0$. Then there exist an increasing sequence $n_r$, $r\in\Nz$, and a measure $\mu\in \mathcal{M}(\Rz^2)$ with $\mu\geq0$ and $\mu(\Rz^2)=1$  such that   $\mu_{r}\rightharpoonup^*\mu$ in $\mathcal{M}(\Rz^2)$, where $\mu_{r}:=\mu_{D_{n_r}\PPP(\,\cdot\,+a_{n_r})}$ for some translations $a_n\in\Rz^2$ \EEE \emph{(}see  \eqref{empiricalmeasures} for the definition of  the empirical measures $\mu_{D_{n_r}}$\emph{)}. \PPP Moreover, if $D_n\in\mathcal{C}_n$ are minimizers of $V_n$ in $\mathcal{C}_n$, then we can choose $a_n=t_n\tone$ for integers $t_n\in\Zz$. \EEE
 \end{proposition}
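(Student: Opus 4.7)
The plan is to derive a uniform upper bound on $\#\partial D_n$ from Lemma \ref{strip}, combine it with almost-connectedness and a discrete isoperimetric inequality on the triangular lattice to show that $\mathrm{diam}(D_n)=O(\sqrt{n})$, and then invoke standard weak$^*$ compactness of tight families of probability measures on $\Rz^2$.

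First, I would use the equivalence of assertions (i) and (ii) in Lemma \ref{strip}: the hypothesis \eqref{upperbound}, rewritten as $\sqrt{n}\,E_n(\mu_{D_n})=V_n(D_n)+6c_Fn\leq C\sqrt{n}$, gives via \eqref{Vn_lower_bound} a bound $\#\partial D_n\leq C'\sqrt{n}$ for some constant $C'$ independent of $n$. Next, I would enumerate the connected components $D_n^1,\ldots,D_n^{k_n}$ of $D_n$ in the order supplied by the definition of almost-connectedness, so that for every $i\geq 2$ the component $D_n^i$ lies at distance at most $q$ from $\bigcup_{l<i}D_n^l$. Since each connected component of a finite lattice configuration must intersect $\partial D_n$ (each such component contains at least one atom with fewer than six film neighbors), one has $k_n\leq\#\partial D_n\leq C'\sqrt{n}$. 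The discrete isoperimetric inequality on the triangular lattice gives $\mathrm{diam}(D_n^i)\leq C\#\partial D_n^i$ for some absolute constant $C$ (the leftmost and rightmost atoms of $D_n^i$ are boundary atoms, and they are joined along the boundary by a path of length at most $\#\partial D_n^i$). Summing,
\begin{equation*}
\sum_{i=1}^{k_n}\mathrm{diam}(D_n^i)\leq C\#\partial D_n\leq CC'\sqrt{n},
\end{equation*}
and iterating the triangle inequality along the chain of components yields
\begin{equation*}
\mathrm{diam}(D_n)\leq\sum_{i=1}^{k_n}\mathrm{diam}(D_n^i)+q(k_n-1)=O(\sqrt{n}).
\end{equation*}

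Rescaling atoms by $1/\sqrt{n}$ as in \eqref{empiricalmeasures}, the rescaled set $D_n/\sqrt{n}$ therefore has diameter of order one, so one can choose $a_n\in\Rz^2$ such that $(D_n+a_n)/\sqrt{n}\subset\overline{B_R(0)}$ for a radius $R$ independent of $n$. The translated empirical measures $\mu_n:=\mu_{D_n(\,\cdot\,+a_n)}$ are then probability measures all supported in the fixed compact set $\overline{B_R(0)}$, hence a tight family; Banach--Alaoglu produces an increasing sequence $n_r$ along which $\mu_r\stackrel{*}{\rightharpoonup}\mu$ for some nonnegative Radon measure $\mu\in\mathcal{M}(\Rz^2)$, and the uniform compact support guarantees $\mu(\Rz^2)=1$.

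Finally, in the case where $D_n$ are minimizers of $V_n$, property (iv) of Definition \ref{transformation} gives $\mathcal{T}_1(D_n)=D_n$, so every connected component of $D_n$ has at least one atom bonded to $\partial\mathcal{L}_S$; thus the bottom of $D_n$ always lies on $\{x_2=e_S\}$ and no vertical centering is needed, and one can take $a_n=t_n\tone$ with $t_n\in\Zz$ to center the configuration horizontally while keeping it inside $\mathcal{L}_F$. The main obstacle I expect is precisely the diameter estimate: propagating the single-component isoperimetric bound to the entire configuration requires careful use of the chain structure in the definition of almost-connectedness, which is exactly where that hypothesis is indispensable, since without it the components of $D_n$ could spread arbitrarily far apart and compactness would fail.
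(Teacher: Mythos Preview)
Your argument is correct and arrives at the same diameter bound $\mathrm{diam}(D_n)=O(\sqrt{n})$, but the route differs from the paper's. The paper does not chain the individual component diameters. Instead, it attaches to each atom $x$ the truncated Voronoi cell $\nu_{\mathrm{trunc}}(x):=\nu(x)\cap B(x,q)$ (where $\nu(x)$ is the Voronoi cell of $x$ relative to its own connected component) and shows that the full union $\Omega:=\bigcup_{x\in D_n}\nu_{\mathrm{trunc}}(x)$ is a \emph{single} connected planar set: within each component, neighboring atoms share Voronoi boundary; across components, the truncation radius $q$ is exactly what is needed so that two components at distance $\le q$ have overlapping truncated cells at the midpoint of their closest pair. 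One application of $\mathrm{diam}(\Omega)\le\tfrac12|\partial\Omega|$ together with $|\partial\Omega|\le 2\pi q\,\#\partial D_n$ then gives the bound directly.

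Your approach is more combinatorial---bound each $\mathrm{diam}(D_n^i)$ separately and chain along the tree implicit in the almost-connected ordering---which avoids the Voronoi construction and makes the role of almost-connectedness very transparent. Two small points deserve tightening. First, your parenthetical justification for $\mathrm{diam}(D_n^i)\le C\,\#\partial D_n^i$ (``leftmost and rightmost joined along the boundary'') only controls the horizontal extent and tacitly assumes the boundary atoms form a single path, which fails if $D_n^i$ has holes; the clean fix is exactly the paper's continuum inequality $\mathrm{diam}\le\tfrac12\mathcal{H}^1(\partial\cdot)$ applied to the union of the closed hexagonal lattice Voronoi cells of $D_n^i$. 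Second, the chaining step should make explicit that the almost-connected ordering defines a tree (each $D_n^i$ is within $q$ of some specific earlier $D_n^{\sigma(i)}$), so that the path in this tree between any two components visits each component at most once, which is what legitimizes the bound $\sum_i\mathrm{diam}(D_n^i)+q(k_n-1)$.
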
 
 
 \begin{proof} \FFF In the following we denote by $B(x,R)$ an open ball of radius $R>0$ centered at $x \in \mathbb{R}^2$ and we define $B(R):=B(o,R)$ where $o$ is the origin in $\mathbb{R}^2$. \EEE %\RRR (first translate so that the voronoi cells of $D_n$ are connected) \EEE 
We want to show that \PPP there exists \FFF $R>0$ such that $D_{n}\subset B(R)$ (up to a translation) for every $n$. \EEE 

\FFF To this aim \EEE  we denote for any $D_n$ \EEE its  \FFF $k:=k_{D_n}$ \EEE connected components \FFF by \EEE $D_n^i$  for $\FFF i\EEE=1,\dots,k$.
We define  the sets 
$$\FFF \Omega_i:= \bigcup_{x\in    D_n^i} \nu_{\textrm{trunc}}(x),$$
%$$\Omega:= \bigcup_{x\in  D_{n}} \nu_{\textrm{trunc}}(x), $$
for $i=1,\dots,k$, where $$\nu_{\textrm{trunc}}(x):=\nu(x)\cap  B(x,q)$$ 
\FFF with $q$ defined in \eqref{eSratio} and $\nu(x)$ denoting  \EEE  the \FFF (closed) \EEE Voronoi cell associated to $x$ \FFF with respect to  $D_n^{i}$, i.e., 
\begin{equation}\label{voronoi}
\nu(x):= \{ y\in\Rz^2\ :\ \text{$|y-x|\leq|y-x'|$ for all $x'\in D_n^{i}\setminus\{x\}$}\}, \EEE
\end{equation}
\EEE
and we observe that by construction \FFF and  the convexity of $\nu(x)$\EEE,
\begin{equation}\label{voronoibound} 
|\partial \nu_{\textrm{trunc}}(x)|\leq 2\MMM q \pi.
\end{equation}
We claim that \FFF $\Omega_i$ are \EEE connected. \FFF Indeed,  \EEE
%This can be argumented  as follows. First we prove \PPP that \EEE 
if $x,y \in  D_n$ are such that  $|x-y|=1$,  then it is easily seen that the midpoint on the line that connects $x$ and $y$ belongs to both $\nu(x)  \cap  \nu(y)$ and $B (x,q) \cap B (y,q)$. The second claim \FFF easily follows from \eqref{eSratio} \EEE while the first claim follows from \PPP the \EEE triangular inequality (it is impossible that for \PPP every \EEE $z \in    D_n$ it is valid 
$$\left|z- \frac{x+y}{2}\right|<1/2$$ since then by \FFF the \EEE triangular inequality $z$ would be \PPP distant from  both $x$ and  $y$ less \EEE than one). %Thus, we have that 
%$${\color{pink} \Omega_i:= \bigcup_{x\in    D_n^i} \nu_{\textrm{trunc}}(x)},$$
%for $i=1,\dots,k$ is connected. 

We now claim that also
$$\Omega:= \bigcup_{x\in  D_{n}} \nu_{\textrm{trunc}}(x), $$
is connected. This follows by showing that $\Omega_i$ and \FFF$\cup_{i=1}^{i-1}\EEE\Omega_l$ are connected for $i=2, \dots, k$, which  in turns is a consequence of the fact that by definition \EEE $D_n^i$  is separated by at most $q$ from \FFF$\cup_{l=1}^{i-1}\EEE D_n^l$ for $i=2,\dots,k$. In fact, by \EEE  the same reasoning used in the previous claim applied this time to two points $x \in D_n^i$ and $y \in \cup_{l=1}^{i-1}D_n^l$ chosen such that $|x-y|= \textrm{dist}(D_n^i,\cup_{l=1}^{i-1} D_n^l)$, \FFF where $\textrm{dist}(A,B)$ with respect to two subsets $A$ and $B$ of $\mathbb{R}^2$ denotes  the distance between them, we can deduce that   $(x+y)/2$ belongs to both $\nu(x)  \cap  \nu(y)$ and $ B(x,q) \cap B (y,q)$, which yields the claim. \EEE

%Again in the same way as before we conclude that the midpoint $(x+y)/2$ belongs to both $\nu(x)  \cap  \nu(y)$ and {\color{pink}$ B(x,q) \cap B (y,q)$}.

 %Notice also by elementary geometric observation that if $x \in    D_n$ has $6$ \FFF neighbors\EEE, then we have that  $\nu_{\textrm{trunc}} (x) \cap \partial \Omega=\emptyset$. 
Therefore, we have that
\begin{equation}\label{diameter_bound} 
\textrm{diam}(D_{n}):=\max_{x,y\in D_{n}} |x-y|\leq \textrm{diam}(\Omega)\leq \frac{1}{2} |\partial \Omega|\leq  \frac{1}{2} \sum_{x\in \partial D_n}|\partial \nu_{\textrm{trunc}}(x)| \leq \pi \MMM q \#\partial D_{n}
\end{equation}
where \FFF $\textrm{diam} (A)$ of a set $A$ is the diameter of $A$ and we used that $\Omega$ is connected \FFF in the second inequality, that if $x \in    D_n$ has $6$ film neighbors, then by elementary geometric observation $\nu_{\textrm{trunc}} (x) \cap \partial \Omega=\emptyset$ in the third inequality,  \EEE and \eqref{voronoibound} in the last inequality\EEE. 

Finally, from   \eqref{upperbound}, \eqref{diameter_bound} and  Lemma \ref{strip} we obtain that 
$$
\textrm{diam}(D_{n})\leq  \frac{C\pi \MMM q}{\Delta} n^{1/2}
$$
\FFF and hence, by \eqref{empiricalmeasures} there exist translations $\mu_n$  of $\mu_{D_n}$ such that $\textrm{supp} \mu_n\subset  B(R)$ for some  $R>C\pi q/2\Delta$ and for every $n$. 
%Therefore,  by \eqref{empiricalmeasures} we have that $\textrm{supp}   {\color{pink}{\mu}_n}\subset {\color{pink} B(R)}$, where ${\color{pink}   \mu_n}$ denotes the translation of ${\color{pink}\mu_n}$,  for some {\color{pink}$R>C\pi pe_S/2\Delta$} and for every $n$. 
Therefore, \EEE since $|\mu_{D_n}|(\Rz^2)=1$ for every $n$,  by \cite[Theorem 1.59]{AFP} there exist a subsequence $(n_r)_{r} \in \mathbf{N}$ 
and a measure $\mu\in \mathcal{M}(\Rz^2)$ such that \PPP $\mu_{r}\stackrel{*}{\rightharpoonup}\mu$ in $\mathcal{M}(\Rz^2)$. Furthermore, $\mu\geq0$ and
$$
\mu(\Rz^2)\leq\lim_{r\to\infty}  {\mu}_r(\Rz^2)=1.
$$
In order to conclude the proof  it suffices to prove that $\mu(\Rz^2)=1$, and this directly  follows from the fact that  \FFF the support of \EEE $\mu_r$ are contained \FFF in a compact set of $\Rz^2$.  \EEE
\end{proof}

\PPP  The following compactness result is the analogous of \cite[Theorem 1.1]{Yeung-et-al12} in our setting with substrate interactions. %and the same arguments by replacing  \cite[Proposition 3.2]{Yeung-et-al12} with Proposition \ref{zadnje}.  \EEE

\begin{comment}

 \begin{proposition}%\label{compactnesstheorem}
 Assume  \eqref{dewetting_condition}. \PPP Let $D_n$ be a sequence of almost-connected configurations satisfying \eqref{upperbound} and $\mu_{D_n}$ the corresponding associated empirical measures given by \eqref{empiricalmeasures}.   Then, the following assertions hold:
  \begin{enumerate} 
 \item  Up to translations (i.e., up to replacing $\mu_{D_n}$ by  $\mu_{D_n}(\cdot+a_n)$ for some $a_n\in\Rz^2$) and a passage to a non-relabelled subsequence,  $\mu_{D_n}$ converges weak*  in $\mathcal{M}(\mathbb{R}^2)$ to a measure $\mu \in \mathcal{M}(\mathbb{R}^2)$. 
 \item The limiting measure $\mu$ is of the form
$$
\mu=\rho \chi_E,
$$
 where $\rho:=2/\sqrt{3}$ $($that is, the density of atoms per unit volume of the triangular lattice$)$ and $E\subset\Rz^2\setminus S$  is a bounded set of finite perimeter such that $\mathcal{L}^2(E)=1/\rho$. 	\EEE
\end{enumerate} 
\end{proposition} 

\end{comment}

\EEE

 \begin{theorem}[Compactness] \label{compactnesstheorem}
 Assume  \eqref{dewetting_condition}.  Let $D_n\in\mathcal{C}_n$ be configurations  satisfying \eqref{upperbound} and \PPP let \EEE $\mu_{n}:=\mu_{\mathcal{T}(D_n)}$ \PPP be \EEE the empirical measures associated to the transformed configurations \PPP $\mathcal{T}(D_n)\in\mathcal{C}_n$ associated to $D_n$  by Definition \ref{transformation}. \EEE  Then, \PPP up to translations $($i.e., up to replacing $\mu_{n}$ by  $\mu_{n}(\cdot+a_n)$ for some $a_n\in\Rz^2$$)$ and a passage to a non-relabelled subsequence,  $\mu_{n}$ converges weak*  in $\mathcal{M}(\mathbb{R}^2)$ to a measure $\mu\in\mathcal{M}_W$, where	$\mathcal{M}_W$ is defined in \eqref{M_Wulff}. \PPP Furthermore, if $D_n\in\mathcal{C}_n$ are minimizers of $V_n$ in $\mathcal{C}_n$, then we can choose $a_n=t_n\tone$ for integers $t_n\in\Zz$. %\ZZZ(I added specification of the direction $\tone$ for the translations of minimizers)\EEE\EEE
\end{theorem}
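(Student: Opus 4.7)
The plan is to reduce the statement to the almost-connected setting of Proposition \ref{zadnje} via the transformation $\mathcal{T}$, and then to upgrade the weak* limit to a characteristic-type measure supported on a bounded set of finite perimeter. Setting $\widetilde{D}_n := \mathcal{T}(D_n)$, properties (i) and (iii) following Definition \ref{transformation} yield that $\widetilde{D}_n$ is almost connected and $V_n(\widetilde{D}_n) \leq V_n(D_n) \leq -6c_F n + C n^{1/2}$, so \eqref{upperbound} is preserved. If moreover $D_n$ minimizes $V_n$ in $\mathcal{C}_n$, then so does $\widetilde{D}_n$. Proposition \ref{zadnje} applied to $\widetilde{D}_n$ then provides translations $a_n \in \Rz^2$ (which can be taken of the form $t_n\tone$ with $t_n \in \Zz$ in the minimizer case) and a subsequence along which $\mu_n(\cdot + a_n) \stackrel{*}{\rightharpoonup} \mu$ with $\mu \geq 0$, $\mu(\Rz^2) = 1$, and $\mathrm{supp}\,\mu$ contained in a fixed compact set by \eqref{diameter_bound}.

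Next I would identify $\mu$ as $\rho \chi_D$ for a bounded set $D \subset \Rz^2 \setminus S$ of finite perimeter with $|D| = 1/\rho$. To each atom of $\widetilde{D}_n$ I attach its Voronoi cell as in \eqref{voronoi}, suitably truncated near $\partial\mathcal{L}_S$; interior cells are regular hexagons of area $1/\rho = \sqrt{3}/2$. Let $H_n$ be the union of these cells, translated by $-a_n\sqrt{n}$. From Lemma \ref{strip} the bound $\#\partial \widetilde{D}_n \leq C n^{1/2}$ combined with \eqref{voronoibound} gives a uniform BV bound for the rescaled sets $H_n/\sqrt{n}$, while counting cells yields $|H_n/\sqrt{n}| = 1/\rho + O(1/\sqrt{n})$. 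BV compactness then produces a further subsequence along which $\chi_{H_n/\sqrt{n}} \to \chi_D$ in $L^1$ for a set of finite perimeter $D$ with $|D| = 1/\rho$. Since the concentration points of the empirical measure sit within $O(1/\sqrt{n})$ of the centers of their rescaled Voronoi cells, the measures $\mu_n(\cdot + a_n)$ and $\rho \chi_{H_n/\sqrt{n}}\mathcal{L}^2$ share the same weak* limit, yielding $\mu = \rho \chi_D$; boundedness of $D$ comes from \eqref{diameter_bound}, and $D \subset \Rz^2 \setminus S$ follows from $\mathcal{L}_F \subset \Rz^2 \setminus \overline{S}$ together with the truncation of the cells at $\partial S$.

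The main obstacle I anticipate is the second step: one must verify that the Voronoi surrogate produces a set of \emph{finite} perimeter with precisely the volume $1/\rho$, without losing mass to lower-dimensional concentrations. Particular care is required at the cells touching $\partial \mathcal{L}_S$, where the hexagonal geometry breaks and the substrate-contribution part of the strip energy from Lemma \ref{strip_local} must compensate the missing isoperimetric deficit; here the dewetting assumption \eqref{dewetting_condition} is essential, as it keeps $\Delta_{\mathrm{strip}} > 0$ in \eqref{delta_strip} and hence ensures $\#\partial \widetilde{D}_n = O(\sqrt{n})$, which is the quantitative input needed to drive the BV compactness.
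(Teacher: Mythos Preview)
Your proposal is correct and follows essentially the same route as the paper: reduce to the almost-connected case via $\mathcal{T}$, apply Proposition \ref{zadnje}, and then identify the limit as $\rho\chi_D$ via the lattice-Voronoi construction and BV compactness. The paper's own proof is terser because it delegates your entire second step to the arguments of \cite[Theorem 1.1]{Yeung-et-al12}; note also that no truncation of the hexagonal cells at $\partial S$ is actually needed there (the inclusion $D\subset\Rz^2\setminus S$ comes for free from the support of the empirical measures), so the ``main obstacle'' you flag is not a genuine difficulty.
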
 

\PPP
\begin{proof}
We begin by observing that the transformed configurations $\mathcal{T}(D_n)$ of the configurations $D_n$ are almost-connected configurations  in $\mathcal{C}_n$ since they result from applying transformation $\mathcal{T}_2$, and that 
\begin{equation}\label{energy_lower}
V_n(\mathcal{T}(D_n))\leq V_n(D_n),
\end{equation}
since no active bond of $D_n$ is deactivated by performing the transformations $\mathcal{T}_1$ and $\mathcal{T}_2$ (see Definition \ref{transformation} for the definition of $\mathcal{T}_1$ and $\mathcal{T}_2$). Therefore, in view of Proposition \ref{zadnje} by  \eqref{upperbound} and  \eqref{energy_lower} we obtain that, up to a non-relabeled subsequence, there exist $a_n\in\Rz^2$ and a measure $\mu\in \mathcal{M}(\Rz^2)$ with $\mu\geq0$ and $\mu(\Rz^2)=1$  such that   $$\mu_{\mathcal{T}(D_n)}(\cdot+a_n)\rightharpoonup^*\mu$$ in $\mathcal{M}(\Rz^2).$
We can then conclude that  $\mu\in\mathcal{M}_W$ by directly applying the arguments in the proof of \cite[Theorem 1.1]{Yeung-et-al12}.

\end{proof}

\PPP
 We notice that, if the sequence $D_n\in\mathcal{C}_n$ is a sequence of almost-connected configurations, then Theorem \ref{compactnesstheorem} directly  holds for $D_n$ without  the need to pass to the associated transformed configurations  $\mathcal{T}(D_n)$ given by Definition \ref{transformation}.
\EEE

\begin{comment}

\ZZZ Check: \EEE 
\begin{remark} \label{compa1} It can be seen from the proof in \cite{Yeung-et-al12} that, under the assumption $E_n(\mu_{D_n}) \leq C$,  it is satisfied: 
		\begin{enumerate} 
			\item $\mu_{D_n}- \rho \chi_{H_n} \stackrel{*}{\rightharpoonup} 0$; 
		%	\item $\exists C>0,\quad	 \PPP\mathcal{H}^1(\partial H_n)\EEE<C $. In particular this implies that $\rho \chi_{H_n} $ converges weakly in $SBV_{\textrm{loc}} (\mathbb{R}^2)$, on a subsequence (see Corollary \ref{corsbv2} below). 
		\end{enumerate} 	
	\end{remark}
\end{comment}

\section{Lower bound}\label{sec:lower_bound}

\begin{comment}
 hR is the closed hexagon of radius R > 0 with ?bottom edge? parallel to e1,
hR = conv{±Re1, ±Re2, ±R(e2?e1)}, (9)
hR is its interior, and R and ?? are chosen suitably such that #?? ? L = N.

Next, we construct an appropriate set in the plane associated with the configuration S. For each x ? S, let V(x) be the Voronoi cell of x,
V(x)= y?R2 :|y?x|?|y?x?|forallx? ?S\{x} . (19)
As V(x) may be unbounded, it is useful to introduce in addition its truncation
Vtrunc(x) := V(x) ? B1(x), (20)
where Br(x)istheball{y ?R2 :|y?x|?r}.

As in the proof of Proposition 3.2 we use the decomposition of R2 into truncated Voronoi cells Vtrunc(x) (see (19)?(20)) associated with the points x of an atomistic configuration S.

hr? (x) is the closed hexagon of radius r around x which is obtained from hr (see (9)) by a 30? rotation around the centre. This hexagon is the Voronoi cell of any interior point x of a particle configuration S on the triangular lattice (2).
\end{comment}

\PPP

%Following the notation used in  \cite{Yeung-et-al12}  
We denote by $h_{1/\sqrt{3}}(x)$ the interior part of the Voronoi cell associated to every  $x\in\mathcal{L}_F$ with respect to $\mathcal{L}_F$, i.e., 
$$
h_{1/\sqrt{3}}(x):= \left\{ y\in\Rz^2\ :\ \text{$|y-x|<|y-x'|$ for all $x'\in \mathcal{L}_F\setminus\{x\}$}\right\}
$$
that is an open hexagon of radius $1/\sqrt{3}$, and by $v(x)$ its scaling in $\mathcal{L}_F/\sqrt{n}$, i.e.
\begin{equation}\label{scaled_voronoi}
v(x):= \frac{h_{1/\sqrt{3}}(x)}{\sqrt{n}}.
\end{equation}
 Given a configuration $D_n$, we consider the auxiliary set $H_n$  associated to $D_n$ which was introduced in  \cite{Yeung-et-al12} and defined by 
\begin{equation}\label{H1}
H_n= \bigcup_{x \in D_n} \overline{v(x)}.
\end{equation} 
The boundary of $H_n$ is given by the union of  a number $M\in\Nz$ (depending on $D_n$) of closed polygonal boundaries $P_1,\dots,P_M$. For $k=1,\dots,M$ we denote the $m_k\in\Nz$ vertices of $P_k$ by $v_1^k,\dots,v_{m_k}^k$ and we set $v_{m_k+1}^k:=v_1^k$, so that
$$
P_k:=\bigcup_{i=1}^{m_k}[v_{i+1}^k,v_i^k]
$$
where $[a,b]$ denotes the closed segment with endpoints $a,b\in\Rz^2$. 
Notice that each $m_k$ is even and that we can always order the vertices so that
$$
v_{2i}^k\in V_{\mathcal{L}_F}^{\rm e}:=\left( \frac{1}{3\sqrt{n}}(\tone+\ttwo)+\frac{1}{\sqrt{n}}\mathcal{L}_F\right)%\cup \left(\frac{1}{3\sqrt{n}}(2\tone-\ttwo)+\frac{1}{\sqrt{n}}\mathcal{L}_F\right)
$$
and 
$$
v_{2i-1}^k\in % \left(\frac{1}{3\sqrt{n}}(\tone+\ttwo)+\frac{1}{\sqrt{n}}\mathcal{L}_F\right)\cup 
V_{\mathcal{L}_F}^{\rm o}:=\left(\frac{1}{3\sqrt{n}}(2\tone-\ttwo)+\frac{1}{\sqrt{n}}\mathcal{L}_F\right)
$$
(see Figure \ref{fig:auxiliary}). To avoid the atomic-scale oscillations in $\partial H_n$ between the two sets of vertices $V_{\mathcal{L}_F}^{\rm e}$ and $V_{\mathcal{L}_F}^{\rm o}$, we introduce another auxiliary set denoted by $H'_n$ where such oscillations are removed, by considering only the vertices in one of the two sets, say  $V_{\mathcal{L}_F}^{\rm o}$ as depicted in Figure \ref{fig:auxiliary}. More precisely, the set $H'_n\subset\Rz^2$ is defined 
%\begin{equation}\label{H2}
%H_n':=\left\{y\in\Rz^2\setminus S \:\: \partial H_n':=\bigcup_{k=1}^M P_k', \right\}
%\end{equation}
as the unique set with $D_n\subset H_n'$ such that 
\begin{equation}\label{H2}
\partial H_n':=\bigcup_{k=1}^M P_k',
\end{equation}
where
$$
P_k':=\bigcup_{i=1}^{m_k/2}[v_{2i-1}^k,v_{2i+1}^k].
$$
\begin{figure}
\includegraphics[width=0.9\textwidth]{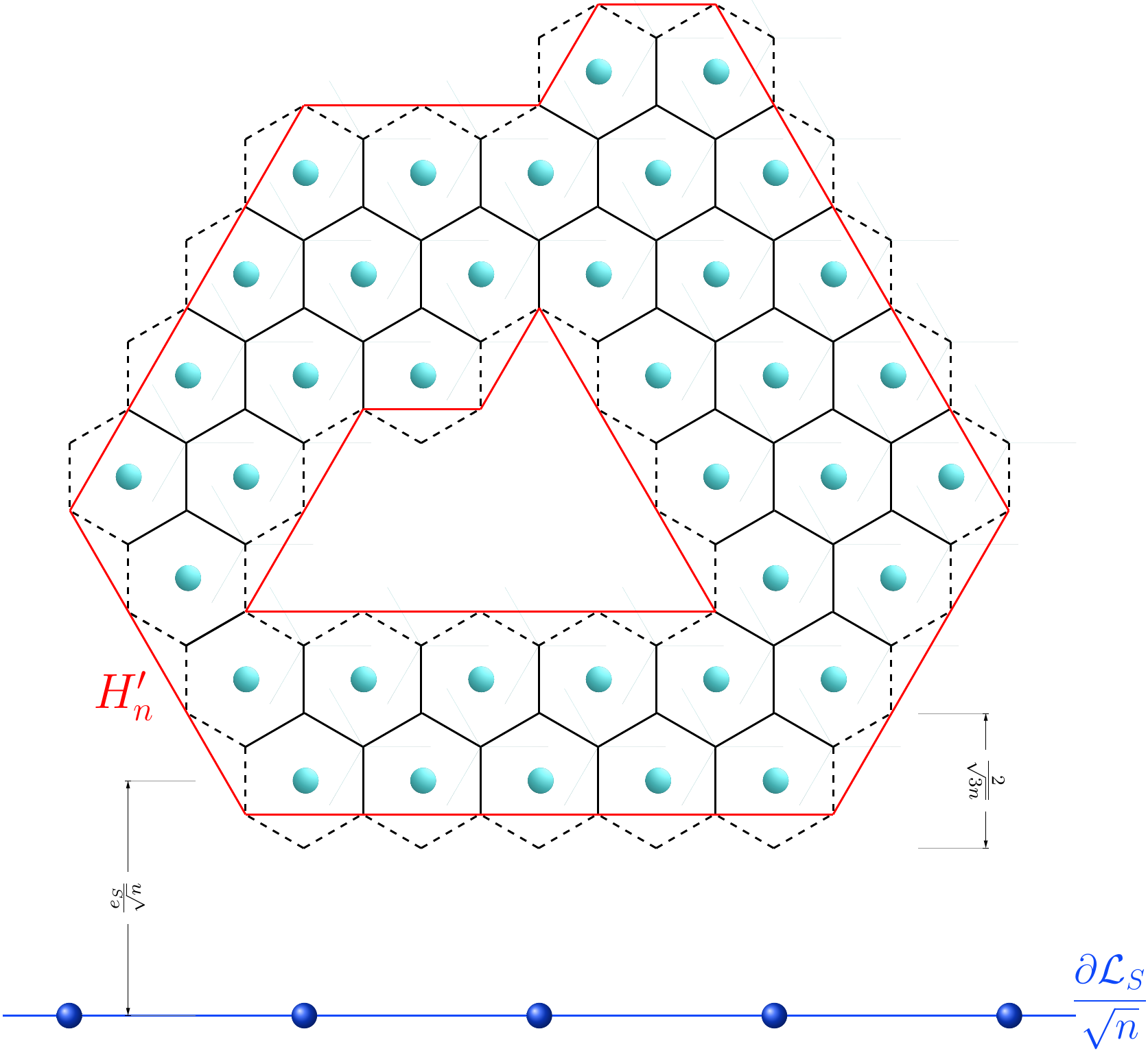}
\caption{\PPP A configuration $D_n$ is depicted with scaled Voronoi cells  $v(x)$ for every $x\in D_n$. The boundary of $H_n$, which in this example consists of two polygonal lines (one \FFF ``internal'' and one ``external''), \EEE is indicated with a dashed black line while the boundary of $H_n'$ with a continuous red line.}
\label{fig:auxiliary}
\end{figure}

\EEE

\PPP It easily follows from the construction of the auxiliary sets $H_n$ and $H_n'$ associated to the configuration $D_n$ that \EEE
\begin{equation} \label{comparison1} 
 |H_n \Delta H_n'| \leq \frac{\# \partial D_n}{8\PPP n\EEE \sqrt 3},  
  \end{equation} 
 and
 \begin{equation} \label{comparison2} 
 \left|\PPP\mathcal{H}^1(\partial H_n)\EEE-\PPP\mathcal{H}^1(\partial H_n')\EEE\right| \leq 2\sqrt{3} \frac{\# \partial D_n}{\sqrt n}.  
 \end{equation}

In the following we use the notation 
$$\partial\mathcal{L}_{FS}^n:= \frac{\partial\mathcal{L}_{FS}}{\sqrt{n}}.$$ 
For every point $y\in\partial\mathcal{L}_{FS}^n$ we denote \PPP its left and right half-open intervals with length $1/\sqrt{n}$ by 
$$\PPP I^{+}_y:=\left[y,y+\frac{1}{2\sqrt{n}}\right)\quad\textrm{and}\quad I^{-}_y:=\left(y-\frac{1}{2\sqrt{n}}, y\right],$$
%\ZZZ (Note that I extra divided the strip with 1/2 otherwise the oscillatory strip was reaching the next atoms) \PPP
respectively, and we define the \emph{oscillatory sets of $y\in\partial\mathcal{L}_{FS}^n$} by
$$
O_n^{y}:=O_n^{y,-}\cup O_n^{y,+},
$$
where $O_n^{y,\pm}$ are the \emph{left and right oscillatory sets of $y\in\partial\mathcal{L}_{FS}^n$}, i.e., 
$$\PPP O_n^{y,\pm}:=\{I^{\pm}_y\times \Rz: y\in\partial\mathcal{L}_{FS}^n\}$$
(see Figure \ref{fig:lattices}).
The (overall) \emph{oscillatory set} $O_n$ is defined as 
\begin{equation} \label{total_oscillation} 
O_n:=\bigcup_{y\in\partial\mathcal{L}_{FS}^n}O_n^{y}.
  \end{equation} 
\EEE

Here $O_n$ is the oscillatory set on  that consists of  union of stripes of width $1/\sqrt{n}$ and infinite length (following the way of construction of the set $H_n'$) that correspond to the possible positions of film atoms at the place $x_2=\frac{e_S}{\sqrt n}$ that are at distance $\frac{e_S}{\sqrt n}$ from some of substrate atoms. 
 $\nu_{H_n'}$ is a normal at the boundary.

The following lemma will help in the proof of lower-semicontinuity result. It is a simplified version of the proof of \cite[Theorem 1.1]{Yeung-et-al12} and we give it for the sake of completeness. 
We recall that  $\rho:=2/\sqrt{3}$.  
\begin{lemma} \label{lemivan1}
Let $D_n \in \mathcal{C}_n$ be such that $E_n(\mu_{D_n})$ is bounded, 	
where $\mu_{D_n}$ is the empirical measure associated with $D_n$. Let $H_n'$ be defined as above. Then we have that
$\mu_{D_n}- \rho \chi_{H_n'} \stackrel{*}{\rightharpoonup} 0$. 	
\end{lemma}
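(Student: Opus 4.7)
The plan is to show separately that $\mu_{D_n} - \rho\chi_{H_n} \stackrel{*}{\rightharpoonup} 0$ and that $\rho\chi_{H_n} - \rho\chi_{H_n'} \to 0$ in $L^1(\mathbb{R}^2)$, which together yield the statement. The first convergence is purely a Riemann-sum style estimate that uses only the geometry of the scaled Voronoi cells, while the second is a consequence of the boundary bound coming from Lemma \ref{strip} and the comparison estimate \eqref{comparison1}.

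For the first step, fix $\psi \in C_c(\mathbb{R}^2)$. Since the scaled hexagons $v(x)$ defined in \eqref{scaled_voronoi} have pairwise disjoint interiors and satisfy $|v(x)| = \sqrt{3}/(2n) = 1/(\rho n)$, one may rewrite
\begin{equation*}
\int_{\mathbb{R}^2} \psi \, d\mu_{D_n} - \int_{\mathbb{R}^2} \psi \, d(\rho\chi_{H_n}) = \sum_{x \in D_n} \rho \int_{v(x)} \left[\psi\!\left(\tfrac{x}{\sqrt{n}}\right) - \psi(y)\right] dy.
\end{equation*}
Because each $v(x)$ is contained in a ball of radius $1/\sqrt{3n}$ centered at $x/\sqrt{n}$, and $\psi$ is uniformly continuous with modulus $\omega_\psi$, the right-hand side is bounded in absolute value by $\omega_\psi(2/\sqrt{3n})\, \rho\sum_{x \in D_n}|v(x)| = \omega_\psi(2/\sqrt{3n})$, which tends to $0$ as $n \to \infty$. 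This yields $\mu_{D_n} - \rho\chi_{H_n} \stackrel{*}{\rightharpoonup} 0$; note that no compactness of the supports is required, as the estimate holds globally.

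For the second step, the assumption $E_n(\mu_{D_n}) \le C$ combined with the equivalence of assertions (i) and (ii) in Lemma \ref{strip} gives $\#\partial D_n \le C'\sqrt{n}$ for some constant $C'>0$. Plugging this into \eqref{comparison1} produces
\begin{equation*}
|H_n \Delta H_n'| \le \frac{\#\partial D_n}{8n\sqrt{3}} \le \frac{C'}{8\sqrt{3n}} \to 0,
\end{equation*}
so that $\|\rho\chi_{H_n} - \rho\chi_{H_n'}\|_{L^1(\mathbb{R}^2)} \to 0$. Since $L^1$ convergence implies weak* convergence of the corresponding measures, combining this with the first step gives $\mu_{D_n} - \rho\chi_{H_n'} \stackrel{*}{\rightharpoonup} 0$. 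I do not expect a serious obstacle here: the only nontrivial input is the boundary bound from Lemma \ref{strip}, and the rest is a direct geometric comparison between point masses and their surrounding Voronoi cells.
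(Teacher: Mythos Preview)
Your proof is correct and follows essentially the same approach as the paper: first show $\mu_{D_n}-\rho\chi_{H_n}\stackrel{*}{\rightharpoonup}0$ via a Riemann-sum estimate using uniform continuity of the test function on the scaled Voronoi cells, then use Lemma~\ref{strip} together with \eqref{comparison1} to obtain $\rho\chi_{H_n}-\rho\chi_{H_n'}\to 0$ in $L^1$. Your presentation is in fact slightly more explicit than the paper's in writing out the Riemann-sum identity and the cell-volume computation.
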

\begin{proof} 
It is easy to see that 
$$ \mu_{D_n}- \rho \chi_{H_n}  \stackrel{*}{\rightharpoonup} 0. $$
Namely for $\psi \in C_0(\mathbb{R}^2)$, \FFF where  $C_0(\Rz^2)$ denotes the set of continuous functions with compact support in $\Rz^2$,  \EEE 
 we have that 
$$
\left|\int_{\mathbb{R}^2}(\psi d \mu_{D_n}-\rho \chi_{H_n}\psi) dx\right| \leq  \frac{1}{n} \sum_{x \in D_n}\sup_{x \in D_n} \{|\psi(x)-\psi(y)|: |x-y|\leq \frac{1}{\sqrt{3n}} \}\\ 
\to  0,	
$$
as $n \to \infty$. From estimate \eqref{comparison1} and Lemma \ref{strip} we have that $\rho\chi_{H_n}-\rho \chi_{H_n'} \to 0$ strongly in $L^1$, from which we have the claim.
\end{proof}	

The following lower-semicontinuity result for the discrete energies $E_n$ is based on adapting some \PPP  ideas \EEE used in \PPP \cite{AlbDes} and \cite{FM3}. \EEE

 \begin{theorem} \label{lowerbound}

% Let $H_n, H_n',E \subset B_R$, $\mu_{D_n}$ be defined as above.	
\PPP If  $\{D_n\}$ is a sequence of %almost-connected \ZZZ(since we are using Theorem \ref{compactnesstheorem}) \PPP 
configurations such that  
$$\mu_{D_n} \stackrel{*}{\rightharpoonup} \rho\chi_{\PPP D\EEE}$$ 
weakly* with respect to the convergence of measures, where $\mu_{D_n}$ are the associated empirical measures of $D_n$  and $D\subset\Rz^2\setminus S$ is a set of finite perimeter with $|D|=1/\rho$, then
 \begin{eqnarray} \label{konacnopaolo}
  \liminf_{n \to \infty} E_n (\mu_{D_n}) &\geq& \mathcal{E}(D).
  \end{eqnarray} 
%where in the definition of $\mathcal{E}$ we define $ \psi:=2c_{F}$ and $$\sigma:=2c_{F}-\frac{c_S}{q}.$$ %where $\widetilde{D}_n$ denotes the transformed configuration of a configuration $D_n$.
 \EEE 
%\begin{equation} \label{konacnopaolo}\GGG
%I_0^{e_S} (E):=\begin{cases}
	%I_0^{e_S} (E)  &=&
%	 2c_F   +\left( 2c_F-\frac{c_S}{e_S}\right) \mathcal{H}^1(\partial^*E \cap\{x_2=0\})& \textrm{if } \MMM q\neq 1 \EEE, \\
	%I_0^{e_S} (E) &=& 
%	2c_F \int_{\partial^* E \cap \{x_2>0\}} \Gamma (\nu_E) d \mathcal{H}^1+\left( 2c_F-c_S\right) \mathcal{H}^1(\partial^*E \cap\{x_2=0\}) &\textrm{if } e_S \leq 1.
%	\end{cases}
%	\RRR
%\end{equation} 

 \end{theorem}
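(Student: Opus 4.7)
The plan is to rewrite $E_n(\mu_{D_n})$ in terms of the auxiliary polygonal set $H_n'$ and then pass to the limit by combining an anisotropic lower-semicontinuity result for the free-surface part with a discrete homogenisation argument for the substrate-adhesion part.

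After extracting a subsequence along which the $\liminf$ is attained, assume it is finite. By Lemma \ref{strip}, $E_n(\mu_{D_n})\leq C$ gives $\#\partial D_n\leq C'\sqrt n$, so $\mathcal H^1(\partial H_n')\leq C''$. Lemma \ref{lemivan1} yields $\rho\chi_{H_n'}\stackrel{*}{\rightharpoonup}\rho\chi_D$, and the uniform perimeter bound together with the boundedness of $|H_n'|$ upgrades this to $\chi_{H_n'}\to \chi_D$ in $L^1(\Rz^2)$. The key algebraic identity, obtained by counting film--film missing bonds as edges of $\partial H_n'$ (each of length $1/\sqrt n$ in a direction where $\Gamma(\nu_{H_n'})=2c_F$) and film--substrate active bonds as horizontal unit segments lying in $\partial H_n'\cap\partial S_n\cap O_n$, is
\begin{equation*}
E_n(\mu_{D_n})=\int_{\partial H_n'}\Gamma(\nu_{H_n'})\,d\mathcal H^1\;-\;c_S\,\mathcal H^1\!\big(\partial H_n'\cap\partial S_n\cap O_n\big)+o(1),
\end{equation*}
where the $o(1)$ absorbs the discrepancy between $H_n$ and $H_n'$ controlled by \eqref{comparison1}--\eqref{comparison2}.

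Split the first integral into its contributions over $\partial H_n'\cap(\Rz^2\setminus \overline{S_n})$ and $\partial H_n'\cap\partial S_n$. Since $\Gamma$ extends by one-homogeneity to a convex Finsler norm on $\Rz^2$, the functional $E\mapsto \int_{\partial^* E}\Gamma(\nu_E)\,d\mathcal H^1$ is $L^1$-lower semicontinuous by Reshetnyak's theorem; using $\chi_{H_n'}\to \chi_D$ in $L^1$ and $\chi_D=0$ on $S$, this gives
\[
\liminf_{n\to\infty}\int_{\partial H_n'\cap(\Rz^2\setminus \overline{S_n})}\Gamma(\nu_{H_n'})\,d\mathcal H^1\geq\int_{\partial^* D\setminus\partial S}\Gamma(\nu_D)\,d\mathcal H^1.
\]
For the contact region, combine $\int_{\partial H_n'\cap\partial S_n}\Gamma(\nu_{H_n'})\,d\mathcal H^1=2c_F\,\mathcal H^1(\partial H_n'\cap\partial S_n)$ with the $O_n$-correction. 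Since $\partial\mathcal L_{FS}$ has horizontal spacing $q$, the set $O_n$ occupies a fraction $1/q$ of $\partial S_n$ at scale $1/\sqrt n$, and on every $q$-cluster of consecutive candidate unit contact edges of $\partial H_n'\cap\partial S_n$ at most one lies in $O_n$. Combined with $\chi_{H_n'}\to \chi_D$ in $L^1$ near $\partial S$, this yields
\[
\limsup_{n\to\infty} c_S\,\mathcal H^1\!\big(\partial H_n'\cap\partial S_n\cap O_n\big)\leq \frac{c_S}{q}\,\mathcal H^1(\partial^* D\cap\partial S).
\]
Since $\Gamma(-\tthree)=2c_F$ on $\partial^* D\cap\partial S$, combining the two limits recovers $\mathcal E(D)$.

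The main obstacle is this averaging step. When $q\neq 1$ and $c_S>4c_F$, the pointwise adhesion density $2c_F-c_S$ on the active $O_n$-strips falls below $-\Gamma(\tthree)=-2c_F$, so the continuum homogenisation formula of \cite{AlbDes} is not directly available (its lower-semicontinuous envelope would truncate at $-2c_F$ and produce a strictly smaller limit than $2c_F-c_S/q$). One must therefore exploit the quantised structure of $H_n'$, namely that its boundary is a union of length-$1/\sqrt n$ segments in only three fixed directions, to show that no more than one out of every $q$ candidate contact edges is actually adhered, and then conclude by a vertical-slice/Fatou argument compatible with $\chi_{H_n'}\to \chi_D$.
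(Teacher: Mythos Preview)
Your overall setup coincides with the paper's: pass to a subsequence along which the $\liminf$ is attained and finite, use Lemma~\ref{strip} to bound $\#\partial D_n$, derive the uniform perimeter bound on $H_n'$, invoke Lemma~\ref{lemivan1} to get $\chi_{H_n'}\to\chi_D$, and rewrite $E_n(\mu_{D_n})$ as an anisotropic perimeter minus an adhesion length supported in $O_n$. The Reshetnyak lower bound for the free-surface part is also the paper's argument.

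The genuine gap is your treatment of the adhesion term. You split
\[
E_n(\mu_{D_n})=\underbrace{\int_{\partial H_n'\setminus\partial S_n}\Gamma(\nu_{H_n'})\,d\mathcal H^1}_{A_n}
\;+\;\underbrace{2c_F\,\mathcal H^1(\partial H_n'\cap\partial S_n)-c_S\,\mathcal H^1(\partial H_n'\cap\partial S_n\cap O_n)}_{B_n}
\]
and then use $\liminf(A_n+B_n)\geq \liminf A_n+\liminf B_n$, where for $B_n$ you claim
\[
\limsup_{n\to\infty} c_S\,\mathcal H^1\big(\partial H_n'\cap\partial S_n\cap O_n\big)\leq \frac{c_S}{q}\,\mathcal H^1(\partial^* D\cap\partial S).
\]
This inequality is false in general. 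The observation ``at most one out of every $q$ consecutive candidate edges lies in $O_n$'' is a statement about $q$ consecutive edges along $\partial S_n$, not about $q$ consecutive edges that actually belong to $\partial H_n'\cap\partial S_n$. Nothing prevents $D_n$ from placing \emph{all} of its bottom atoms on $\partial\mathcal L_{FS}$, so that $\partial H_n'\cap\partial S_n\subset O_n$ and your $1/q$ factor disappears. Concretely, take any $D$ with $\mathcal H^1(\partial^*D\cap\partial S)=0$ and modify the natural approximation by moving $\lfloor\sqrt n\rfloor$ boundary atoms to widely separated sites of $\partial\mathcal L_{FS}$: you still have $\mu_{D_n}\stackrel*\rightharpoonup\rho\chi_D$, yet $c_S\,\mathcal H^1(\partial H_n'\cap\partial S_n\cap O_n)\to c_S>0$ while the right-hand side of your claimed bound is $0$. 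Your final paragraph acknowledges a difficulty here but does not supply an argument that survives this example; the ``quantised structure'' of $\partial H_n'$ gives the $1/q$ ratio only on long contiguous contact stretches, not on scattered adhesion points.

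The paper resolves this precisely by \emph{not} separating $A_n$ and $B_n$ near the substrate. It introduces a strip $\{0\leq y_2\leq\delta\}$, keeps the combined measure $\kappa_n=2c_F\kappa_{1,n}-c_S\kappa_{2,n}$ there, and proves a pointwise density lower bound by blow-up: at $\mathcal H^1$-a.e.\ point of $\partial^*D\cap\partial S$ the density of $\kappa_n$ is $\geq 2c_F-c_S/q$ (this is where the $1/q$ averaging legitimately appears, because on a long contact interval the $O_n$-strips do occupy a $1/q$ fraction), while at $\mathcal H^1$-a.e.\ point of $\partial S\setminus\partial^*D$ the density is $\geq 0$. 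The second inequality is the crucial one your proposal is missing: each isolated adhered atom $a_0\in\partial\mathcal L_{FS}\cap D_n$ with little surrounding mass is shown, by a local case analysis on the neighbouring strips $\widetilde O_n^{a_0,\pm1}$, to carry at least $3/\sqrt n$ of \emph{free} boundary in $\partial H_n'$, so its contribution to $\kappa_n$ is $\geq (6c_F-c_S)/\sqrt n\geq 0$ under~\eqref{dewetting_condition}. This is exactly the compensating positive term that makes the decoupled $\limsup$ bound unnecessary and, in the regime $4c_F<c_S<6c_F$, unavailable.
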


\EEE
\begin{proof}
\PPP Let  $\{D_n\}\subset\mathcal{C}_n$ be a sequence of configurations such that $\mu_{D_n} \stackrel{*}{\rightharpoonup} \rho\chi_E$ 
weakly* with respect to the convergence of measures, for \FFF a set $E\subset\Rz^2\setminus S$  of finite \EEE perimeter with $|E|=\sqrt{3}/2$. 
We focus on the case $q\neq 1$ only, since the other case is simpler.

%We notice that we cannot conclude that there exists a radius $R>0$ such that the sets $H_N$ or  $H_N'$ (defined in \eqref{H1} and \eqref{H2}) are contained in a ball $B_R\subset\Rz^2$. However, we can assume without loss of generality that there exists $C>0$ such that $E_n(\mu_{D_n}) \leq C$ \ZZZ(Remember adding why).\PPP

\PPP Without loss of generality  we can assume that the limit in the  left hand side of \eqref{konacnopaolo} is reached and it is finite, and hence there exists $C>0$ such that $E_n(\mu_{D_n}) \leq C$ for every $n\in\Nz$. %\ZZZ(I took away that we need \eqref{upperbound} for this)\PPP. % We can assume, as a consequence of \eqref{upperbound}, that $(E_n(\mu_{D_n}))_n$ is bounded. 
Then, by  the second assertion of Lemma \ref{strip}  there exists $C'>0$ such that $\#\partial D_n \leq C'\sqrt{n}$ for every $n\in\Nz$, from  which it follows that there exists a constant $C''>0$ such that 
  \begin{equation} \label{H1bound}
\mathcal{H}^1 (\partial H_n')<C''
	  \end{equation} 
 for every $n\in\Nz$. Therefore, by, e.g.,  Corollary \ref{corsbv2}, up to a non-relabelled subsequence, $\rho \chi_{H_n'} $ weakly converges in $SBV_{\textrm{loc}}(\mathbb{R}^2)$ to a function  $g\in SBV_{\textrm{loc}}(\mathbb{R}^2)$. Since, up to extracting an extra non-relabelled subsequence, $\mu_{D_n}- \rho \chi_{H_n'} \stackrel{*}{\rightharpoonup} 0$  as proved in  Lemma \ref{lemivan1}  and $\mu_{D_n} \stackrel{*}{\rightharpoonup} \rho\chi_E$ by hypothesis, then $g:=\rho\chi_E$ and $\rho \chi_{H_n'}\stackrel{*}{\rightharpoonup} \rho\chi_E$.

We observe that by \eqref{energy_equivalence}, \eqref{converging_energies00}, and \eqref{H2} we have that 
	  \begin{equation} \label{energy_decomposition} 
 E_n(\mu_{D_n}) = 2c_F  \mathcal{H}^1 (\partial H_n')-c_S \mathcal{H}^1\left(\partial H_n' \cap \left(\Rz\times\left\{\frac{e_S}{\sqrt n}-\frac{1}{2\PPP\sqrt{3n}\EEE}\right\}\right)\cap O_n \right)
%\ZZZ\text{(changed from $\sqrt{3}n$ to $\sqrt{3n}$)}\PPP
	  \end{equation} 
\PPP where $O_n$ is the oscillation set defined in \eqref{total_oscillation}. Fix $\delta>0$ and consider in this proof the notation $y:=(y_1,y_2)\in\Rz^2$ for the coordinate of a point $y\in\Rz^2$. From \eqref{energy_decomposition} it easily follows that \EEE
\begin{eqnarray*} 
E_n (\mu_{D_n}) &=& 2c_{F} \mathcal{H}^1 (\partial H_n' \cap\{\PPP y_2\EEE>\delta\})+\UUU2c_{F}\EEE\mathcal{H}^1 (\partial H_n' \cap \{0 \leq  \PPP y_2\EEE \leq \delta\})\\ & 
&-c_S \mathcal{H}^1\left(\partial H_n' \cap \left\{ \PPP y_2\EEE=\frac{e_S}{\sqrt n}-\frac{1}{2\PPP\sqrt{3n}\EEE}\right\}\cap O_n \right)\\
&=& \int_{\partial H_n' \cap \{ \PPP y_2\EEE >\delta\} }\Gamma (\nu_{H_n'}) d \mathcal{H}^1+2c_F\mathcal{H}^1 (\partial H_n' \cap \{0 \leq  \PPP y_2\EEE \leq \delta\})\\ & 
&-c_S \mathcal{H}^1\left(\partial H_n' \cap \left\{ \PPP y_2\EEE=\frac{e_S}{\sqrt n}-\frac{1}{2\PPP\sqrt{3n}\EEE}\right\}\cap O_n \right), 
\end{eqnarray*}
%\ZZZ (note that the first coefficient I think was wrong, there was an extra 2. Also note that I put $c_F$ in the definition of $\Gamma$ now) \PPP 
where in the second equality we used the definition of $\Gamma$ (see \eqref{gamma}) to see that
$$
\Gamma(\pm\tthree)=\Gamma\left(\pm\frac{\tone+\ttwo}{\sqrt3}\right)=\Gamma\left(\pm\frac{\ttwo-2\tone}{\sqrt3}\right)=2 c_F.
$$
By Reshetnyak's lower semicontinuity \cite[Theorem 2.38]{AFP} we obtain that
$$ 
\liminf_{n \to \infty}  \int_{\partial H_n' \cap \{x_2>\delta\}} \Gamma (\nu_{H_n'}) d \mathcal{H}^1 \geq \liminf_{n \to \infty}  \int_{\partial H_n'\cap  B(R) \cap \{x_2>\delta\}} \Gamma (\nu_{H_n'}) d \mathcal{H}^1 \geq \int_{\partial^* E\cap  B(R) \cap  \{x_2>\delta\}} \Gamma (\nu_E) d\mathcal{H}^1,
$$
for every \FFF ball $B(R)$ centered at the origin and with radius $R>0$, since \EEE $\chi_{H_n'}$ converges weakly* in \PPP $BV_{\textrm{loc}}(\mathbb{R}^2)$ (and thus strongly in $L^1_{\textrm{loc}}$) \EEE to $\chi_E$, \PPP and hence, \EEE
by letting $R \to \infty$,
\begin{equation} \label{lsc1}
\liminf_{n \to \infty}  \int_{\partial H_n' \cap \{x_2>\delta\}} \Gamma (\nu_{H_n'}) d \mathcal{H}^1 \geq \int_{\partial^* E \cap  \{x_2>\delta\}} \Gamma (\nu_E) d\mathcal{H}^1. 
\end{equation} 

\PPP We claim that  for all $\delta>0$ small enough  
\begin{eqnarray}
\nonumber& &\liminf_{n \to \infty} \left[2c_F \mathcal{H}^1 (\partial H_n' \cap \{0 \leq \PPP y_2\EEE \leq  \delta\})-c_S \mathcal{H}^1\left(\partial H_n' \cap \left\{ \PPP y_2\EEE=\frac{e_S}{\sqrt n}-\frac{1}{2\PPP\sqrt{3n}\EEE}\right\}\cap O_n \right) \right]  \\
& & \label{eqivan30}  \hspace {+35ex} \geq \left(2c_F-{\OOO \frac{c_S}{q}} \right) \mathcal{H}^1(\partial^*E \cap\{\PPP y_2\EEE=0\})  \label{aerodrom1}
\end{eqnarray}
\PPP and we notice that from  \eqref{lsc1}  and \eqref{aerodrom1}   we obtain \EEE
\begin{eqnarray*}
\liminf_{n \to \infty} E_n(\mu_{D_n}) &\geq & \int_{\partial^* E'\cap \{x_2>\delta\}} \Gamma (\nu_E) d\mathcal{H}^1+\left(2c_F-{\OOO\frac{c_S}{q}}\right) \mathcal{H}^1(\partial^*E \cap\{x_2=0\}),
\end{eqnarray*}	
\PPP from which \eqref{konacnopaolo} directly follows by letting $\delta \to 0$. 
To prove the claim \eqref{aerodrom1} we fix $\delta>0$, we introduce  the Borel measures $\kappa_{1,n}, \kappa_{2,n}$, and  $\kappa_n$  defined by	
\begin{eqnarray*}
	\kappa_{1,n} (B)&:=& \mathcal{H}^1 (\partial H_n' \cap \{0 \leq  y_2 \leq  \delta\}\cap B),\\ \kappa_{2,n}(B) & :=&  \mathcal{H}^1\left(\partial H_n' \cap \left\{ y_2=\frac{e_S}{\sqrt n}-\frac{1}{2\PPP\sqrt{3n}\EEE}\right\}\cap O_n \cap B\right), \\
	\kappa_n (B)&\PPP:=\EEE& 2c_F\kappa_{1,n}(B)-c_S{\kappa_{2,n}}(B), 
\end{eqnarray*}
for every $B \in \mathcal{B} (\mathbb{R}^2)$, where \FFF $\mathcal{B} (A) $ for a set $A$  denotes the Borel $\sigma$-algebra on $A$, and we consider the sets 
$$Q_{M}:=[-M,M] \times [0,\delta], \quad  \mathring{Q}_M:=(-M,M) \times [0,\delta],\quad\text{and}\quad Q_M^c:=(\Rz \times [0,\delta])\backslash  Q_M. $$
We divide the proof in three steps:\EEE

{\bf Step 1.}  
In this step we prove that for every $M>0$ we have that
\begin{equation} \label{eqivan23} 
\liminf_{n \to \infty} \kappa_n(Q_M) \geq (2c_F-\frac{c_S}{q})\mathcal{H}^1(\partial^*E\cap \{x_2=0\}\cap Q_M). 
\end{equation}
 By \eqref{H1bound} we conclude that, up to extracting a non-relabelled subsequence, for every $M>0$, there exist  Borel measures
$\kappa_1^M$ and $\kappa_2^M$ such that $\kappa_{1,n}|_{Q_M} \stackrel{*}{\rightharpoonup} \kappa_1^M$ and $\kappa_{2,n}|_{Q_M} \stackrel{*}{\rightharpoonup} \kappa_2^M$. Consequently $\kappa_n|_{Q_M} \stackrel{*}{\rightharpoonup} 2c_F \kappa_1^M-c_S \kappa_2^M$. 
By using Lemma \ref{lmabscon} we conclude that $\kappa_2^M$ is absolutely continuous with respect to  the  Borel measure 
\begin{equation}\label{abs_cont}
\mu^M(\cdot)=\mathcal{H}^1 (\{ y_2 =0\}\cap Q_M \cap \cdot)
\end{equation}
 and we denote its density with respect to $\mu^M$  by $\zeta_2^M$. The measure $\kappa_1^M$  might not be absolutely continuous with respect to \eqref{abs_cont}. We denote the density of its  absolutely continuous part with respect to $\mu^M$  by  $\zeta_1^M$. 
To  conclude  the proof of \eqref{eqivan23}
\begin{eqnarray} \label{kontvrd1}
2c_F\zeta_1^M( y_1)-c_S\zeta_2^M( y_1) &\geq & 2c_F-\frac{c_S}{q}, \textrm{ for } (y_1,0) \in \partial^{*} E \cap \mathring{Q}_M, \\ \label{kontvrd2}
 2c_F\zeta_1^M( y_1)-c_S\zeta_2^M( y_1) &\geq & 0,  \textrm{ for  $\mathcal{H}^1$ a.e. } ( y_1,0)  \in (\partial S\setminus\partial^{*} E)\cap  \mathring{Q}_M. 
%&& \ZZZ(\text{cannot we forget the second since it is a set with $\mathcal{H}^1$-measure zero?})\EEE
\end{eqnarray}
 We begin by showing \eqref{kontvrd1}.  Take  $y'=(y'_1,0) \in \partial^* E \cap \mathring{Q}_M$ and denote by $Q_{\varepsilon}(y')$ the square  centered at $y'$ with edges of size $\varepsilon$  parallel to the coordinate axes, where $\varepsilon>0$ is small enough such that $Q_{\varepsilon}(y') \subset \mathring{Q}_M$. Let  $Q^{+}_{\varepsilon}(y'):=Q_{\varepsilon}(y)\cap \{y_2>0\}$.  By  standard properties (see, e.g., \cite[Example 3.68]{AFP}) %\ZZZ(I think the reference is more direct than what was written, i.e., \cite[Theorem 3.59]{AFP})\EEE 
 we conclude that 
$$ \lim_{\varepsilon \to 0} \frac{1}{\varepsilon^2} \int_{Q^{+}_{\varepsilon}(y') } |\chi_E(z)-1|dz=0. $$
Since $\chi_{H_n'} \to \chi_E$ \PPP as $n\to\infty$ in $L^1(Q^{+}_{\varepsilon}(y'))$ for $\varepsilon>0$ fixed \EEE we conclude that 
$$ \lim_{\varepsilon \to 0} \lim_{n \to \infty} \frac{1}{\varepsilon^2} \int_{Q^{+}_{\varepsilon}(y') } |\chi_{H_n'}(z)-1|dz=\lim_{\varepsilon \to 0}\frac{1}{\varepsilon^2} \int_{Q^{+}_{\varepsilon}(y') } |\chi_{E}(z)-1|dz=0. $$
Thus,  for every $0<\alpha<1$ there exists $0<\varepsilon_0< \frac{\delta}{2}$ such that 
\begin{equation*}
 \liminf_{n \to \infty} |H_n' \cap Q^{+}_{\varepsilon}(y)|\geq \frac{\alpha}{2}\varepsilon^2, \quad \forall \varepsilon< \varepsilon_0. 
 \end{equation*} 
Next we define the sets $Q^0_{\varepsilon}(y'):= Q_{\varepsilon}(y') \cap \{y_2=0\}$. We have that
\begin{equation} \label{estimate1}
{\OOO\liminf_{n \to \infty}  \mathcal{H}^1\left(\{(y_1,0) \in Q^0_{\varepsilon}(y'): \{y_1\} \times \mathbb{R}^{+} \cap  H_n' \neq \emptyset\} \right)\geq \varepsilon \alpha. }
\end{equation}
We look the bottom face of the set $H_n'$ and project it on $ Q^0_{\varepsilon}(y')$. From the estimate \eqref{estimate1} it follows that 
\begin{equation} \label{estimate2} 
\liminf_{n \to \infty}\kappa_n(Q_{\varepsilon} (y'))\geq{\OOO  \left(  1/q (2c_F-c_S)+(\alpha-1/q)2c_F \right) \varepsilon.}
\end{equation}
We take a sequence in $(\varepsilon)$, still denoted by $(\varepsilon)$ such that for each memeber of the sequence we have $\kappa_1(\partial Q_{\varepsilon}(x'))=\kappa_2(\partial Q_{\varepsilon}(y'))=0$.
 By the standard properties of measures  (see \cite[Section 1.6.1, Theorem 1]{Evans}) and \eqref{estimate2} we have 
\begin{eqnarray*}
2c_F \zeta_1^M(y')-c_S \zeta_2^M(y')&=& \lim_{\varepsilon \to 0} \frac{2c_F \kappa_1^M(Q_{\varepsilon}(y'))-c_S \kappa_2^M(Q_{\varepsilon}(y'))}{\varepsilon} \\ &=& \lim_{\varepsilon \to 0}  \lim_{n \to \infty}  \frac{\kappa_n(Q_{\varepsilon} (y'))}{\varepsilon} \\
&\geq & {\OOO 1/q (2c_F-c_S)+(\alpha-1/q)2c_F}. 
\end{eqnarray*} 
By letting $\alpha \to 1$ we have \eqref{kontvrd1}.

\PPP It remains to show \eqref{kontvrd2}. \EEE   Let  $y'=(y_1',0) \in \mathring{Q}_M \backslash \partial^{*} E$. Notice that by \PPP standard property of BV functions $\mathcal{H}^1$ a.e. $(y_1,0)$ that does not belong to $\partial^{*} E$, belongs to the set of density zero for $E$ (see \cite[Theorem 3.61]{AFP})\PPP, i.e., \EEE
$$ \lim_{\varepsilon \to 0} \lim_{n \to \infty} \frac{1}{\varepsilon^2} \int_{Q^{+}_{\varepsilon}(y') } \chi_{H_n'}(y)dy=\lim_{\varepsilon \to 0}\frac{1}{\varepsilon^2} \int_{Q^{+}_{\varepsilon}(y') }\chi_{E}(y)dy=0. $$
Thus\PPP, for \EEE each $\alpha>0$ there exits \PPP $\varepsilon_0>0$ \EEE such that 
\begin{equation}\label{estimate10} 
\limsup_{n \to \infty} |H_n' \cap Q^{+}_{\varepsilon}(\PPP y'\EEE)|\leq \alpha\varepsilon^2, \quad \forall \varepsilon< \varepsilon_0. 
\end{equation}  
We need to pay attention to the atoms $y'$ that are bonded with substrate atoms, whose deficiency contribution (recall \eqref{deficiency}) can be negative and as low as $2c_F-c_S$. 
%For fixed $n$ and $\varepsilon$ we look all the atoms that occupy critical position at the distance {\OOO $e_S/\sqrt{n}$} from the substrate atoms and that are in the cube $Q_{\varepsilon} (y')$ (the key problem is that their energy deficiency can be negative $2c_F-c_S$). For fixed $n$ and $\varepsilon$ we denote these set of atoms by $K_{n,\varepsilon}(y')$. Their total energy deficiency can be  $(2c_F-c_S)\textrm{card } K_{n,\varepsilon}(y')$ . 

The proof consists in showing that  for $n$ large enough the total ``energy deficiency" on the cube $Q_{\varepsilon} (y')$ is actually positive, since there is  ``not much of set $E$''  in the cube $Q_{\varepsilon} (y')$.
We define
$$
K_{n,\varepsilon}(y'):= \frac{\partial \mathcal{L}_{FS}}{\sqrt{n}}\cap Q_{\varepsilon} (y')  \cap D_n
$$

 Fix  $a_0\in K_{n,\varepsilon}(y')$ and denote by $a_{-1}$ and $a_{1}$ the closest points to $a_0$ in $\partial \mathcal{L}_{FS}/\sqrt{n}$ on the left and on the right of $a_0$, respectively. We consider the set
$$
 \widetilde O_n^{a_0}:=\bigcup_{i=-1,0,1}\widetilde  O_{n}^{a_0,i},
$$
where $\widetilde O_{n}^{a_0,-1}:=O_n^{a_{-1},+}$, $\widetilde O_{n}^{a_0,1}:=O_n^{a_{1},-}$, and $\widetilde O_{n}^{a_0,0}:=O_n^{a_{0}}$, and we denote its projection onto $\partial S$ by $P^{a_0}_n$. Notice  that $\mathcal{H}^1(P^{a_0}_n)=2/\sqrt{n}$. We claim \FFF that \EEE
$$ \limsup_{n \to \infty} \mathcal{H}^1 \left(\bigcup_{a_0 \in \tilde K_{n, \varepsilon}(y')} P^{a_0}_{n}\right) \leq 16\alpha \varepsilon $$
where 
$$
\widetilde K_{n,\varepsilon}(y'):= \left\{a_0\in K_{n,\varepsilon}(y')\,:\, \text{$\FFF\exists \EEE i\in\{-1,0,1\}$ such that}\, |\widetilde O_{n}^{a_0,i}\cap H_n'\cap Q_{\varepsilon} (y')|>\frac{\varepsilon}{8\sqrt n}
\right\}.
%\frac{\partial \mathcal{L}_{FS}}{\sqrt{n}}\cap Q_{\varepsilon} (y')
$$

\FFF Indeed, as a \EEE consequence of \eqref{estimate10} we have
\begin{equation} \label{eqivan11} 
\#\FFF \widetilde K_{n,\varepsilon}(y') \EEE \leq 8\alpha \varepsilon \sqrt{n}+1 
\end{equation}
\FFF and hence, by  \EEE \eqref{eqivan11} 
we have
\begin{equation} \label{eqivan24} 
\sum_{a_0 \in \widetilde{K}_{n,\varepsilon}(y')}\kappa_{n}^M (\widetilde O_n^{a_0}) \geq -|2c_F-c_S|(8\alpha \varepsilon \sqrt{n}+1) \frac{1}{\sqrt{n}}. 
	\end{equation}
\FFF We now fix $ a_0 \in K_{n, \varepsilon}(y') \backslash \widetilde K_{n, \varepsilon}(y') $ such that $a_0$ is  neither the first left nor the \EEE last right atom in $K_{n,\varepsilon}(y')$ \FFF and show by \EEE a simple analysis of the  atoms  $a_i$, $i=-1,0,1$ that 
 \begin{equation} \label{eqivan1} 
 \mathcal{H}^1(\partial H_n'\cap Q_{\varepsilon} (y') \cap \widetilde O_n^{a_0}) \geq \FFF \frac{3}{\sqrt{n}}\EEE
 \end{equation}
\FFF which immediately implies \EEE that for $ a_0 \in K_{n, \varepsilon}(y') \backslash \widetilde K_{n, \varepsilon}(y') $ the energy contribution of the strip $ Q_{\varepsilon} (y') \cap \widetilde O_n^{a_0}$ for every $\varepsilon>0$ is positive and \FFF so, \EEE
  \begin{equation} \label{eqivan25} 
  \sum_{a_0 \in K_{n,\varepsilon}(y')\backslash\widetilde{K}_{n,\varepsilon}(y')}\kappa_{n}^M (\widetilde O_n^{a_0}) \geq (6c_F-c_S) \frac{\# (K_{n,\varepsilon}(y')\backslash\tilde{K}_{n,\varepsilon}(y'))}{\sqrt{n}}\geq 0. 
  \end{equation}
To prove \eqref{eqivan1} we analyse \FFF the \EEE three possible cases: 
\begin{enumerate}
	\item both  of the strips  $\widetilde O_{n}^{a_0,-1}$ and $\widetilde O_{n}^{a_0,+1}$  have empty intersection with $H_n'$;
	\item  one of the strips  $\widetilde O_{n}^{\FFF a_0,-1\EEE}$ and $\widetilde O_{n}^{a_0+1}$   has empty intersection with $H_n'$;
	\item  none of the strips  $\widetilde O_{n}^{a_0,-1}$ and $\widetilde O_{n}^{a_0,+1}$   has empty intersection with $H_n'$;
\end{enumerate}
In \FFF the \EEE first case we have that  $a_0$  does not have \FFF neighbors \EEE and \FFF hence, \EEE there is a part of $\partial H_n'$ of length $3/\sqrt{n}$ (perimeter of the equilateral triangle with side of size $1/\sqrt{n}$) that surrounds $a_0$, i.e., belongs to $v(a_0)\cap \partial H_n'$.  This proves \eqref{eqivan1} in the case of (1). \FFF For the second case we suppose without loss of generality that the \EEE  interior of the strip of $\widetilde O_{n}^{a_0,-1}$ has empty intersection with $H_n'$. We take the atom $x_1^r$  that belongs to $D_n\cap \overline{\widetilde O_{n}^{a_0,+1}}$ that is the \FFF lowest \EEE  and the atom $x_2^r \in D_n\cap \overline{\widetilde O_{n}^{a_0,+1}}$ that does not have at least one of \FFF the two of \EEE his upper \FFF neighbors\EEE. Both of these atoms always exist (the second one by the fact that $a_0 \in K_{n, \varepsilon}(y') \backslash \tilde K_{n, \varepsilon}(y')$). 
It is easy to see that \eqref{eqivan1} is satisfied also in this 
case since we have contribution of $2/\sqrt{n}$ from $v(a_0) \cap \partial H_n'$, where $v$ is defined in \eqref{scaled_voronoi},  and at least $1/(2\sqrt{n})$ from 
$$\FFF\left(v(x_1^r) \cup v\left(x_1^r-\frac{1}{\sqrt{n}\ttwo}\right)\cup v\left(x_1^r+\frac{1}{\sqrt{n}(\tone-\ttwo)}\right)\right)\EEE\cap \widetilde O_{n}^{a_0,+1}  \cap \partial H_n',$$ and at least  $1/(2\sqrt{n})$ 
from $v(x_2^r)\cap \widetilde O_{n}^{a_0,+1} \cap \partial H_n'$; in the case when $x_1^r=x_2^r$ we have the contribution of at least $1/\sqrt{n}$ from  
 \FFF$$\left(v(x_1^r) \cup v\left(x_1^r-\frac{1}{\sqrt{n}\ttwo}\right)\cup v\left(x_1^r+\frac{1}{\sqrt{n}(\tone-\ttwo)}\right)\right)\EEE\cap \widetilde O_{n}^{a_0,-1}  \cap \partial H_n'.$$ In \FFF a \EEE similar way in the \FFF third  case \EEE we find atoms $x_1^l,x_2^l \in D_n \cap \overline{\widetilde O_{n}^{a_0,+1}}$ and $x_1^r,x_2^r \in D_n \cap \overline{\widetilde O_{n}^{a_0,+1}}$ \FFF for which \EEE  there exist contribution of $1/\sqrt{n}$ coming from $v(a_0) \cap \partial H_n'$,  $1/\sqrt{n}$ coming from
\begin{align*}\FFF
\Bigg[\left(v(x_1^r) \cup v\left(x_1^r-\frac{1}{\sqrt{n}\ttwo}\right)\cup v\left(x_1^r+\frac{1}{\sqrt{n}(\tone-\ttwo)}\right)\right)\cap &\FFF \widetilde O_{n}^{a_0,+1}\cap \partial H_n'\Bigg]\\
&\FFF\cup (v(x_2^r)\cap \widetilde O_{n}^{a_0,+1} \cap \partial H_n'),
\end{align*}\EEE
 and  $1/\sqrt{n}$ coming from 
 \begin{align*}\FFF
 \Bigg[
 \left(v(x_1^l) \cup v\left(x_1^l-\frac{1}{\sqrt{n}\ttwo}\right)\cup v\left(x_1^l+\frac{1}{\sqrt{n}(\tone-\ttwo)}\right)\right)&\cap \widetilde O_{n}^{a_0,-1} \cap \partial H_n'
 \Bigg]\\
&\FFF \cup (v(x_2^l)\cap \widetilde O_{n}^{a_0,-1} \cap \partial H_n'). 
\end{align*}\EEE
The rest of the energy deficiency that is inside the strip is positive.  From \eqref{eqivan24} and \eqref{eqivan25} we conclude that 
\begin{eqnarray*}
	2c_F \zeta^M_1(y')-c_S \zeta^M_2(y')&=& \lim_{\varepsilon \to 0} \frac{2c_F \kappa_1^M(Q_{\varepsilon}(y'))-c_S \kappa_2^M(Q_{\varepsilon}(y'))}{\varepsilon} \\ &=& \lim_{\varepsilon \to 0}  \lim_{n \to \infty}  \frac{\kappa_n(Q_{\varepsilon} (y'))}{\varepsilon} \\
	&\geq & -8|2c_F-c_S| \alpha. 
\end{eqnarray*} 
By letting $\alpha \to 0$, \FFF \eqref{kontvrd2} follows. \EEE

\PPP {\bf Step 2.}  In this step we deduce \eqref{aerodrom1} from the inequalities \eqref{kontvrd1} and \eqref{kontvrd2} proved in Step 1. It suffices \EEE to show that \PPP for every $ \varepsilon>0$ there exist $M_0>0$ and $n_0 \in \mathbb{N}$ such that  
\begin{equation} \label{aerodrom3} 
\kappa_n (Q_M^c) \geq -\varepsilon 
\end{equation} 		
for every $M \geq M_0$.   
%\begin{equation} \label{aerodrom3}  \forall \varepsilon>0\  \exists M_0>0, n_0 \in \mathbb{N} \textrm{ such that }  \forall M \geq M_0,\forall n\geq n_0 \quad (2c_F\kappa_{1,n}-c_S\kappa_{2,n})([-M,M]\times[-M,M]) ^c \geq -\varepsilon.
%\end{equation} 		 
To establish \PPP \eqref{aerodrom3} fix \EEE  $\varepsilon>0$ \PPP and  choose \EEE $M_0>0$ and 
$$n_0 > \frac{16}{\varepsilon^2c_S^2}+1$$
\PPP large enough  \FFF so \EEE that the following three assertions hold: \EEE
\begin{enumerate} 
\item $|E\cap Q_{M_0}^c| \leq \frac{1}{48c_S}\varepsilon \delta$,
\item $\left|(E\cap Q_{M_0}) \Delta (H_n\cap Q_{M_0})\right| \leq \frac{1}{48c_S} \varepsilon \delta$, $\forall n \geq n_0$, 	
\item $|H_n \Delta H_n'| \leq \frac{1}{48c_S}\varepsilon\delta $, $\forall n \geq n_0$.
\end{enumerate} 	
%\ZZZ(check, what is $\delta$? {\color{pink} $\delta$ is given after \eqref{lsc1} })\EEE
\PPP Notice that such $M_0$ and $n_0$ exist since (1) is trivial for large $M_0$, (2) follows from the \FFF $BV_{\textrm{loc}}$-convergence \EEE of \EEE $\rho\chi_{H_n}$  to $\rho \chi_E$, and  (3) \PPP is a \EEE consequence of \eqref{comparison1}. \PPP 

By \EEE (2) and (3) and the fact that 
$$|H_n|=|E|=\frac{\sqrt 3}{2},$$
 we have that 
\begin{equation} \label{aerodrom2} 
|H_n' \cap Q_M^c| \leq \frac{1}{16c_S} \varepsilon \delta, \quad \forall M \geq M_0, \forall n \geq n_0.
\end{equation}

 We define 
$$
K_{n,M}:= \frac{\partial \mathcal{L}_{FS}}{\sqrt{n}}\cap Q_M^c \cap D_n. 
$$
%Next we take again all the atoms  that occupy critical position at the distance $e_S/\sqrt{n}$ from the substrate atoms and that are in the set $([-M,M] \times [-M,M])^c$.  We denote these set of atoms by $K_{n,M}$. Their total energy deficiency can be  $(2c_F-c_S)\textrm{card } K_{n,M}$ . 
Following the same idea of the previous step the proof consists in using the fact that  \FFF ``there is not much of the set $E$ outside $Q_M$'' \EEE  and hence, the  energy deficiency outside $Q_M$ is small, for $n$ large enough.

From \eqref{aerodrom2} it follows that the set $\widetilde K_{n,M}$ defined by 
\begin{align*}
\widetilde K_{n,M}:= \left\{a\in K_{n,M}\,:\, \text{$\exists\alpha\in\{-1,+1\}$ such that}\, |\widetilde O_{n}^{a,\alpha}\cap H_n'\cap (\mathbb{R} \times [0,\delta])|>\frac{\delta}{8\sqrt n}
\right\}\\ %\ZZZ\quad\text{(check, the definition was not written correctly)}\EEE
%\frac{\partial \mathcal{L}_{FS}}{\sqrt{n}}\cap Q_{\varepsilon} (y')
\end{align*}
is such that 
%As before for each $a \in K_{n,M}$ we define $O^a_{n,i}$, i.e., $O_{n,i}^{a,r}$, i.e. $O^{a,l}_{n,i}$, for $i=1,2$. We again look the atoms for which it is valid  $|O_{n,i}^{a}\cap H_n'\cap \mathbb{R} \times [0,\delta]|>\frac{1}{8\sqrt n}\delta$, or  $|O_{n,1}^{a,r}\cap H_n'\cap \mathbb{R} \times [0,\delta]|>\frac{1}{8\sqrt n}\delta$ or $|O_{n,2}^{a,l}\cap H_n'\cap \mathbb{R} \times [0,\delta]|>\frac{1}{8\sqrt n}\delta$. We denote this subset of $K_{n,M}$ by $\tilde K_{n,M}$.
%We easily conclude from \eqref{aerodrom2} that 
$$ \#\widetilde K_{n,M} \leq \frac{1}{2c_S} \sqrt{n} \varepsilon+2, %\ZZZ\quad\text{(check constants, see also before, not clear what is $\delta$)}
$$
and hence
\begin{equation}\label{last_lower}
 \mathcal{H}^1\left( \bigcup_{a \in \widetilde K_{n,M}} \left(a-\frac{1}{2 \sqrt n}, a+\frac{1}{2 \sqrt n}\right) \right) \leq  \frac{\varepsilon}{c_S}
 \end{equation}
for every $n \geq n_0$.   
Since following the same argumentation of the previous step the energy deficiency associated to points in $K_{n,M} \backslash \widetilde K_{n,M}$ is shown to be positive,  from \eqref{last_lower} we easily  conclude \eqref{aerodrom3} 
for every $M \geq M_0$ and $n \geq n_0$.

{\bf Step 3}. 	
Claim \eqref{eqivan30} is an easy consequence of Step 1 and Step 2.  \FFF More precisely, \EEE  from Step 1 and Step 2  we have that  for every $\varepsilon>0$  there exists $M_0>0$ such that
\begin{eqnarray*} 
	\liminf_{n\to \infty} \kappa_n (\Rz^2)&\geq& \liminf_{n \to \infty} \kappa_n(Q_M) +\liminf_{n \to \infty} \kappa_n(Q_M^c)\\ 
	&\geq &  \left(2c_F-\frac{c_S}{q}\right)\mathcal{H}^1(\partial^*E\cap \{x_2=0\}\cap Q_M)-\varepsilon   
\end{eqnarray*}	
for any $M \geq M_0$, where we used \eqref{eqivan23} and \eqref{aerodrom3}. By letting $M \to \infty$ and using arbitrariness of $\varepsilon>0$ \FFF we obtain \eqref{eqivan30}.  
	
\EEE

\end{proof}

%{\color{red} 
\RRR
\section{Upper bound}\label{sec:upper_bound} 
The proof of \PPP the \EEE upper bound  follows \PPP from the arguments of \EEE \cite{Yeung-et-al12} \PPP by playing extra care to the contact with the substrate. \EEE

\begin{theorem} \label{thm:upperbound} %\ZZZ(I took away that it is valid only for $e_S=1$) 
\PPP For \EEE every set $D \subset \Rz^2\setminus S$  of finite perimeter such that $|D|=\sqrt{3}/2$ there exists a sequence of configurations \PPP $D_n \in\mathcal{C}_n$ \EEE such that the \PPP corresponding \EEE associated empirical measures $\mu_{D_n}$    weak* converge to $\rho \chi_D$  and $I_n (\mu_{D_n}) \to \mathcal{E}(D)$.\EEE
\end{theorem}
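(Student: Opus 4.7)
The plan is to construct a recovery sequence via a standard two-step crystalline-anisotropic scheme, adapted to handle the substrate contact.

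\textbf{Step 1 (Polygonal approximation).} Since the $1$-homogeneous extension of $\Gamma$ is a Finsler norm whose unit ball is a regular hexagon with edges normal to the six ``crystal directions'' $\pm\tthree$, $\pm(\tone+\ttwo)/\sqrt 3$ and $\pm(\ttwo - 2\tone)/\sqrt 3$, I would first approximate $D$ by a sequence of polygons $P_k \subset \overline{\mathbb{R}^2 \setminus S}$ with $|P_k| = 1/\rho$ whose free boundary $\partial^* P_k \setminus \partial S$ consists of segments in these six crystal directions, whose contact $\partial^* P_k \cap \partial S$ is a finite union of horizontal segments, and such that $\chi_{P_k} \to \chi_D$ in $L^1$ and $\mathcal{E}(P_k) \to \mathcal{E}(D)$. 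Density of such polygons is classical for crystalline perimeters (Reshetnyak-type continuity), while the contact piece is approximated independently as a $1$-dimensional set. A diagonal argument using the weak* metrizability of Radon measures on compact sets then reduces the problem to building a recovery sequence for a single polygon $P$ of this type.

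\textbf{Step 2 (Discretization).} Fix such a $P$. Since $\partial \mathcal{L}_F$ is $\mathbb{Z}$-invariant along $\tone$, I would pick an integer horizontal shift $c_n \in \mathbb{Z}$ so that the bottom segment(s) of $\sqrt n P + c_n \tone$ start at points of $\partial \mathcal{L}_F$. I would then choose a dilation factor $\lambda_n = 1 + O(n^{-1/2})$ such that
\begin{equation*}
D_n := \mathcal{L}_F \cap (\sqrt n \lambda_n P + c_n \tone)
\end{equation*}
has exactly $n$ elements; such $\lambda_n$ exists because the map $\lambda \mapsto \#(\mathcal{L}_F \cap (\sqrt n \lambda P + c_n\tone))$ sweeps through an $O(\sqrt n)$-neighborhood of $n$ as $\lambda$ varies in an $O(n^{-1/2})$-window around $1$, by the area density $\rho$ of $\mathcal{L}_F$. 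The boundedness of $P$ and $\lambda_n \to 1$ then immediately yield $\mu_{D_n} \stackrel{*}{\rightharpoonup} \rho \chi_P$.

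\textbf{Step 3 (Edge-by-edge energy count).} I would then compute the total deficit $V_n(D_n) + 6 c_F n = c_F \sum_{x \in D_n} (6 - \deg(x)) - c_S \,\# (D_n \cap \partial \mathcal{L}_{FS})$ by decomposing along the edges of $P$. For a free-boundary edge of length $\ell$ with outward normal $\nu$ in a crystal direction, its tangent direction is one of $\pm\tone, \pm\ttwo, \pm(\ttwo-\tone)$, so the atoms of $D_n$ along the edge form a one-dimensional sub-lattice of spacing $1$, and each misses only the bonds pointing outside $P$; a direct count matches this contribution with $\Gamma(\nu)\,\ell\sqrt n + O(1)$. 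For a horizontal contact edge of length $\ell$, the $\ell\sqrt n + O(1)$ atoms in $\partial \mathcal{L}_F$ along it each miss their two downward film bonds (contribution $2c_F \ell\sqrt n$), and since $e_S=q/p$ with $\gcd(p,q)=1$ forces
\begin{equation*}
\partial \mathcal{L}_{FS} = \{(lq, e_S) : l \in \mathbb{Z}\},
\end{equation*}
exactly a fraction $1/q$ of them carry a substrate bond, contributing $-(c_S/q)\ell\sqrt n + O(1)$. Vertex and endpoint errors are $O(1)$ per corner, hence globally $o(\sqrt n)$, and summing yields $E_n(\mu_{D_n}) = n^{-1/2}(V_n(D_n) + 6c_F n) \to \mathcal{E}(P)$, which combined with Step~1 completes the $\limsup$ inequality.

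\textbf{Main obstacle.} The subtlest point is compatibility between Steps 2 and 3: the dilation $\lambda_n$ used to enforce $\#D_n = n$ may misalign the bottom edge with $\partial \mathcal{L}_F$ by $O(1)$ and slightly perturb the length of the contact segment, so the $1/q$ arithmetic count of substrate bonds must be shown to be stable under these perturbations. I expect both effects to contribute only $O(1)$ to the deficit and hence $o(1)$ to $E_n$, but a clean argument may require performing the cardinality correction by adding or removing $O(\sqrt n)$ atoms at concave/convex corners of a free-boundary edge (where each such atom activates exactly three bonds and therefore contributes $0$ to the deficit), thereby decoupling the count correction from the arithmetic alignment at the contact with $\partial S$.
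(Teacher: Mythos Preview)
Your strategy is correct and parallels the paper's, with two differences worth noting.

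The paper does not restrict to crystal-direction polygons. It approximates $D$ first by bounded smooth sets (using reflection across $\partial S$ to obtain simultaneous continuity of the free anisotropic perimeter and of the contact measure $\mathcal{H}^1(\partial^*\cdot\cap\partial S)$; this joint approximation is the delicate point you gloss over in Step~1), then by arbitrary polygons, then moves each vertex to the nearest point of $\mathcal{L}_F/\sqrt n$. In place of your edge-by-edge crystal count it proves a small combinatorial lemma: any segment with endpoints $x,y\in\mathcal{L}_F$ and $y-x=k_1\tone+k_2\ttwo$ is crossed by exactly $2(k_1+k_2)-1=\overline\Gamma(y-x)-1$ unit lattice bonds, so that every polygonal edge, crystalline or not, contributes $\sqrt n\,\Gamma(\nu)\,\ell+O(1)$ to the deficit. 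Your restriction to crystal edges makes the count trivial but shifts the work into the density statement in Step~1; the paper trades that for the bond-counting lemma.

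Your Step~2 has the gap you already flag in the ``main obstacle'' paragraph, and it is slightly worse than a misalignment issue: with edges in lattice directions, the map $\lambda\mapsto\#(\mathcal{L}_F\cap\sqrt n\,\lambda P)$ is piecewise constant with jumps of size $\Theta(\sqrt n)$ whenever an edge crosses a lattice line, so dilation alone cannot in general hit $n$ exactly. Your corner-adjustment repair (add or remove atoms with exactly three neighbours, so the total deficit is unchanged) is valid provided you argue you have $O(\sqrt n)$ such positions available without touching the substrate row. The paper's remedy is different and fully decouples the count from the contact: it accepts $\widetilde D_n:=\mathcal{L}_F^n\cap\overline{E_n}$ with $|\#\widetilde D_n-n|=o(n)$, and in a final step either places a detached Wulff-shaped cluster of $n-\#\widetilde D_n$ atoms in $\{R<x_2<2R\}$ or removes a small parallelogram from the interior, at energy cost $O\bigl(\sqrt{|\#\widetilde D_n-n|}\,/\sqrt n\bigr)\to 0$.
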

\begin{proof}
\PPP
The proof is divided in 5  steps.
\EEE

\textbf{Step 1 \PPP (Approximation by bounded smooth sets).}  \EEE
	In this step we claim that: If $E  \subset \Rz^2\setminus S$  is a set of finite perimeter with $|E|=1/\rho$, then  there \FFF exists a \EEE sequence of sets $(E_j)_{j \in \mathbf{N}}$ \FFF with \EEE $E_j \subset \Rz^2\setminus S$ \FFF for \EEE $j \in \mathbf{N}$  such that \PPP the following assertions hold: 
\begin{enumerate} 
\item[(i)] 	$|E_j|=  1/\rho $; % \ZZZ(Written like this we have not equal but converging to $\sqrt 3/2$. It is enough, or?{\color{pink} Rescaling is explained})\EEE; 
\item[(ii)] $E_j$ \FFF are \EEE bounded;
\item[(iii)]   there exist sets  $E_j' \subset \mathbb{R}^2$ of class $\mathcal{C}^{\infty}$ such that $E_j=E_j' \cap(\Rz^2\setminus \bar{S})$; 
\item[(iv)]  $|E_j \Delta E| \to 0$ as $j \to \infty$;
\item[(v)] $|D \chi_{E_j}| (\Rz^2\setminus \bar{S}) \to | D \chi_E | (\Rz^2\setminus \overline{S})$ as $j \to \infty$;
\item[(vi)] \begin{equation}\label{for_konacnopaolo1}
\int_{\partial^* E_j \cap \PPP(\Rz^2\setminus \bar{S})\EEE} \Gamma (\nu_{E_j} ) d \mathcal{H}^1 \to  \int_{\partial^* E \cap (\Rz^2\setminus \overline{S})} \Gamma (\nu_{E} ) d \mathcal{H}^1 \textrm{ as } j \to \infty;
\end{equation} 
\item[(vii)] \begin{equation}\label{for_konacnopaolo2}
\mathcal{H}^1 (\partial^{*} E_j \cap \partial S) \to \mathcal{H}^1 (\partial^{*} E \cap \partial S) \textrm{ as } j \to \infty.
\end{equation} 
\end{enumerate} 	
We \FFF now  \EEE construct the sequence of sets $(E_j)_{j \in \mathbf{N}}$ that satisfy (ii)-(vii) \FFF and observe that then \EEE (i) is  easily obtained by scaling.
 Let $E' \subset \mathbb{R}^2$ be the set determined from $E$ by reflection over $\partial S$ and note that 
\begin{equation} \label{reflection}
\mathcal{H}^1(\partial^*E'\cap \partial S)=0.
\end{equation} 
By  \cite[Theorem 13.8 and Remark 13.9]{Maggi} we find smooth bounded  open sets $ E_j'\subset \mathbb{R}^2$ that satisfy $| E_j' \Delta E'|\to 0$ and 
\begin{equation} \label{convergence_perimeters}
 |D\chi_{ E_j'}| (\mathbb{R}^2) \to |D\chi_{E'}| (\mathbb{R}^2).
\end{equation} 
We define $ E_j:= E_j' \cap (\Rz^2\setminus \bar{S})$  and we  claim that the sets $E_j$   satisfy (ii)-(vii). 

We begin by noticing that (ii)-(iv) are trivial. % \PPP Furthermore, we can assume assertion (2) without loss of generality by replacing $\tilde E_j$ (if not bounded)  with $\tilde E_j\cap B_j$ for $j$ big enough.  \EEE
\PPP To prove assertion  (v) we begin to observe that
\begin{eqnarray}
%|D \chi_{E'}|(\Rz^2\setminus \bar{S}) &\leq& \liminf_{j \to \infty} |D \chi_{E_j'}|(\Rz^2\setminus \bar{S}), \\ 
|D \chi_{E'}|(\Rz^2\setminus \overline{S}) &\leq& \liminf_{j \to \infty} |D \chi_{E_j'}|(\Rz^2\setminus \overline{S}), \\ 
|D \chi_{E'}|(S) &\leq& \liminf_{j \to \infty} |D \chi_{E_j'}|(S),\\
|D \chi_{E'}|(\partial S)=0 &\leq& \liminf_{j \to \infty} |D \chi_{E_j'}|(\partial S),
 \end{eqnarray}
where we used  \eqref{convergence_perimeters} and \eqref{reflection}, and hence,
\begin{eqnarray*}
%|D \chi_{E'}|(\Rz^2\setminus \bar{S}) &\leq& \liminf_{j \to \infty} |D \chi_{E_j'}|(\Rz^2\setminus \bar{S}), \\ 
&|D \chi_{E'}|(\Rz^2)=|D \chi_{E'}|(\Rz^2\setminus\overline{S})+|D \chi_{E'}|(S)+  |D \chi_{E'}|(\partial S)\\
&\leq \liminf_{j \to \infty} |D \chi_{E_j'}|(\Rz^2\setminus \overline{S})+ \liminf_{j \to \infty} |D \chi_{E_j'}|(S)+ \liminf_{j \to \infty} |D \chi_{E_j'}|(\partial S)\\
&\leq \liminf_{j \to \infty} |D \chi_{E_j'}|(\Rz^2)=|D \chi_{E'}|(\Rz^2).
 \end{eqnarray*}
 Since this can be done on an arbitrary subsequence we have
 \begin{equation} \label{eqivan44} 
 |D \chi_{E}|(\Rz^2\setminus \overline{S})= |D \chi_{E'}|(\Rz^2\setminus \overline{S})= \lim_{j \to \infty} |D \chi_{E_j'}|(\Rz^2\setminus \overline{S})= \lim_{j \to \infty} |D \chi_{E_j}|(\Rz^2\setminus \overline{S}),
 \end{equation} 
 since by the definition of $E'$ and  $E_j$ we have that $D \chi_{E'}(A) = D \chi_{E}(A)$  and $D\chi_{E_j'}(A) = D \chi_{E_j}(A)$ for every $A\in\mathcal{B}(\Rz^2\setminus \overline{S})$. \EEE 

Assertion \EEE (vi) is a direct consequence of Reshetnyak continuity theorem \cite[Theorem 2.39]{AFP}.

To prove  assertion  (vii) we  will first \FFF claim \EEE that for almost every $M>0$  
\begin{equation}\label{for_konacnopaolo2}
\mathcal{H}^1 (\partial E_j \cap  (-M,M) \times\{0\}) \to \mathcal{H}^1 (\partial^{*} E\cap    (-M,M)\times\{0\}).
\end{equation} 
To prove  \FFF claim \EEE \eqref{for_konacnopaolo2} we observe that for almost every $M \geq 0$ we have 
$$
|D \chi_{E'}| \left((\{-M\}\times \mathbb{R} )\cup (\{M\} \times \mathbb{R})\right)=0. 
$$ 
In fact the set of $M\geq0$ where this condition is not satisfied is at most countable. As in the proof of (v) we conclude that for all such $M \geq 0$ we have 
\begin{eqnarray} 
\label{eqivan51} |D\chi_{ E_j}|\left((-\infty, -M)  \times \mathbb{R}^+\right) &\to& |D\chi_{E}|\left((-\infty, -M)  \times \mathbb{R}^+\right), \\ \label{eqivan50} |D\chi_{E_j}|\left( (M, +\infty )  \times \mathbb{R}^+\right) &\to& |D\chi_{E}|\left(( M, +\infty)  \times \mathbb{R}^+\right), \\ \label{eqivan47}   |D\chi_{E_j}|\left(( -M, M )  \times \mathbb{R}^+\right) &\to& |D\chi_{E}|\left( (-M, M )  \times \mathbb{R}^+\right).
\end{eqnarray} 
\FFF We then notice that \EEE \eqref{for_konacnopaolo2} is a consequence of continuity of traces and \eqref{eqivan47}. 

We now make the second claim that 
 \begin{eqnarray} \label{eqivan48}
 \mathcal{H}^1\left(\partial E_j \cap (-\infty, -M)  \times \{0\} \right)&\to & 0, \quad \textrm{ as } M \to \infty,\\ \label{eqivan49} 
\mathcal{H}^1\left(\partial  E_j \cap ( M, +\infty)  \times \{0\} \right)&\to & 0,\quad \textrm{ as } M \to \infty, 
\end{eqnarray} 
which together with \eqref{for_konacnopaolo2} yields (vii).  We \EEE prove only \eqref{eqivan48}, since \eqref{eqivan49} goes in an analogous way. It is enough to show that
\begin{equation} \label{eqivan52} 
 \mathcal{H}^1 \left(\partial E_j \cap  (-\infty,-M) \times\{0\}\right) \leq  
|D \chi_{ E_j}| (\langle-\infty,-M\rangle \times \mathbb{R}^+), 
\end{equation} 
since as a consequence of \eqref{eqivan51} we have that \FFF the \EEE right hand side of \eqref{eqivan52} goes to zero as $j \to \infty$. \FFF 
Estimate \EEE \eqref{eqivan52}  can be seen by taking $\varphi \in C_c(\mathbb{R}^2)$, $\varphi=\tthree$ on some open set $F$ such that $E_j \cap (-\infty, -M)  \times \mathbb{R}^+) \subset \subset F$,  in the identity 
\begin{eqnarray*} 
	\int_{\partial \left(E_j \cap (-\infty, -M)  \times \mathbb{R}^+\right)} \langle \nu_{E_j \cap (-\infty, -M)  \times \mathbb{R}^+},\varphi \rangle dx&=& \int_{F} \langle D\chi_{E_j \cap \left((-\infty, -M)  \times \mathbb{R}^+\right)},\varphi\rangle dx \\ &=& \int_F \chi_{E_j \cap \left((-\infty, -M)  \times \mathbb{R}^+\right)} \textrm{div} \varphi dx =0.
\end{eqnarray*} 
From this it follows that 
\FFF \begin{align} 
 \int_{\partial \left( E_j \cap (-\infty, -M)  \times \mathbb{R}^+\right) \cap\{x_2>0\}}  \langle & \nu_{E_j \cap (-\infty, -M)  \times \mathbb{R}^+}, \tthree  \rangle d \mathcal{H}^1\notag\\
  &=-\mathcal{H}^1 \left(\partial \left( E_j \cap (-\infty, -M)  \times \mathbb{R}^+\right) \cap \{x_2=0\}\right), \label{eqivan100} 
 \end{align} \EEE
where we used the fact that 
$$ \nu_{E_j \cap (-\infty, -M)  \times \mathbb{R}^+}=-\tthree,\quad \mathcal{H}^1 \textrm{ almost everywhere on } x_2=0. $$
\FFF Therefore,  \FFF by \eqref{eqivan100} and since \EEE
$$ \nu_{E_j \cap (-\infty, -M)  \times \mathbb{R}^+}=\tone,\quad \mathcal{H}^1 \textrm{ almost everywhere on } x_1=-M. $$
\EEE
we obtain 
\FFF \begin{align} 
& \mathcal{H}^1 \left( \partial \left( E_j \cap (-\infty, -M)  \times \mathbb{R}^+\right) \cap \{x_2=0\} \right) \notag\\
& \leq  \mathcal{H}^1 \left( \partial \left( E_j \cap (-\infty, -M) \times \mathbb{R}^+\right) \cap(-\infty,-M)\times \mathbb{R}^+  \right). \label{eqivan60} \EEE
 \end{align} 
Notice that 
\begin{equation} \label{eqivan61} 
 \partial \left( E_j \cap (\langle-\infty, -M\rangle  \times \mathbb{R}^+)\right) \cap(-\infty,-M)\times \mathbb{R}^+ =\partial E_j \cap (-\infty,-M)\times \mathbb{R}^+, 
 \end{equation} 
which together with \eqref{eqivan60}  implies \eqref{eqivan48}, since
$$ \mathcal{H}^1 \left( \partial \left( E_j \cap (-\infty, -M) \times \mathbb{R}^+\right) \cap (-\infty, -M)\times \mathbb{R}^+ \right)= |D \chi_{E_j}| (\langle-\infty, -M\rangle  \times \mathbb{R}^+), $$
see \cite[Chapter 3.3]{AFP}. This \FFF concludes \EEE  the proof of (vii).

\vspace{0.5cm}

\textbf{Step 2 \PPP(Approximation  by polygons)\EEE.}  
By Step 1  we can assume that $E \subset \subset B(R)$ is smooth and bounded. 
Furthermore, for such $E$ we can construct a sequence  of approximating polygons $P_{j}$ by choosing the vertices of each $P_{j}$ on the boundary of $E$ in such a way that 
$|P_{j} \Delta E| \to 0$,
$$ \int_{\partial P_{j} \cap \PPP (\Rz^2\setminus \overline{S}) \EEE} \Gamma (\nu_{P_{j}} ) d \mathcal{H}^1 \to  \int_{\partial E \cap (\Rz^2\setminus \overline{S})} \Gamma (\nu_{E} ) d \mathcal{H}^1,$$
and 
$$
 \mathcal{H}^1 (\partial P_{j} \cap \PPP \partial S\EEE) \to \mathcal{H}^1 (\partial E \cap \PPP \partial S\EEE),
	$$
so that
\begin{equation}\label{approximation2}
 \PPP \mathcal{E}\EEE (P_{j}) \to  \PPP \mathcal{E}\EEE  (E).
\end{equation}  
\vspace{0.5cm}
\textbf{Step 3 \PPP (Approximation by polygons with vertices on  the lattice)\EEE.}  \PPP In view \EEE of previous steps \PPP and the \EEE metrizability of the unit ball of measures (where the norm is given by total variation)  induced  by the weak* convergence\PPP, \EEE by employing \PPP a \EEE standard diagonal argument and \PPP \eqref{approximation2}\EEE, we can assume, without loss of generality, that $E$ has polygonal boundary. We now approximate \PPP such polygonal set $E$, whose number of vertices we denote by $m\in\mathbb{N}$ with \EEE a sequence of polygons $E_n$ characterized by $m$ vertices  belonging to 
$$\mathcal{L}_F^n:=\frac{1}{\sqrt{n}} \mathcal{L}_F.$$

More precisely, let $E_n$ be the polygon with vertices the set of $m$ points in $\frac{1}{\sqrt{n}} \mathcal{L}_F$ closest in the Euclidean norm to the $m$ vertices of $E$. Notice that the angles at the vertices of $E_n$ approximate the angles at the vertices of $E$, $|E_n \Delta E| \to 0$,  \RRR
$$ \int_{\partial E_{n} \cap  \PPP (\Rz^2\setminus \overline{S_n}) \EEE} \Gamma (\nu_{E_{n}} ) d \mathcal{H}^1 \to  \int_{\partial E \cap \PPP (\Rz^2\setminus \overline{S}) \EEE} \Gamma (\nu_{E} ) d \mathcal{H}^1
$$
and  \PPP
$$
\mathcal{H}^1 (\partial E_{n} \cap\PPP \partial S_n\EEE ) \to \mathcal{H}^1 (\partial E \cap  \PPP \partial S \EEE).   $$
where  $S_n$ is defined in \eqref{substrate_n}. 
Therefore, 
\begin{equation}\label{approximation3}
\PPP \mathcal{E}_{n}\EEE (E_n) \to \PPP \mathcal{E}\EEE (E),
\end{equation}  
\PPP where $\mathcal{E}_{n}$ is defined in \eqref{continuum_n}. \EEE
Furthermore, there exist    \PPP $\alpha_n\searrow0$ and  $\beta_n \searrow 0$  \EEE  such that 
\begin{equation}\label{alphabeta}
\PPP \left||E_n|-\frac{\sqrt{3}}{2}\right|=\alpha_n\qquad\textrm{and}\qquad |\mathcal{H}^1(\partial E_n)-\mathcal{H}^1(\partial E)|=\beta_n.  \end{equation}  \RRR
\vspace{0.5cm}

\textbf{\PPP Step 4 \PPP (Discrete recovery sequence)\EEE.}  \PPP  \GGG Let us now consider the sequence of crystalline configurations  $\widetilde D_n :=\mathcal{L}_F^n \cap  \overline{E}_n$, and notice that $\mu_{\widetilde D_n}$ weakly* converges to $\rho \chi_E$.  \PPP Furthermore, from the definition of   scaled Voronoi cells $v(x)$ of $x$ (see \eqref{scaled_voronoi}) it follows that\EEE
$$
 \#\widetilde D_n-n=\frac{2n}{\sqrt{3}}\sum_{x\in \widetilde D_n} |v(x)|\,-\,n=\frac{2n}{\sqrt{3}}\sum_{x\in  \PPP\widetilde D_n\setminus \partial \widetilde D_n\EEE} |\PPP v(x)\EEE|\,-\,n\,+\,\frac{2n}{\sqrt{3}}\sum_{x\in \PPP\partial \widetilde D_n\EEE} |\PPP v(x)\EEE|,
$$
and hence, since for every $x\in \widetilde D_n$ we have $  |v(x)| = \sqrt{3}/(2n)$, 
 \begin{align} \label{paolo1} 
 |\#\widetilde D_n-n|&\leq \PPP \frac{2}{\sqrt{3}}\EEE\alpha_nn + \PPP\#\partial \widetilde D_n\EEE  \notag\\%= \PPP \frac{2}{\sqrt{3}}\EEE\alpha_nn + \frac{5}{6} \PPP\#\partial \widetilde D_n\EEE \notag\\
&\leq\frac{2}{\sqrt{3}}\alpha_nn +  C(\mathcal{H}^1(\partial E) +\beta_n)\sqrt{n}
  \end{align} 
\PPP  for some constant $C>0$, where in the last inequality we used \eqref{alphabeta}.\EEE
%To see this we can make volume comparison and scaling with $\sqrt{n}$. Namely,  after scaling (notice that the volume of $\sqrt n E_n$ is $\frac{\sqrt 3}{2}n+n\alpha_n$ ) and dividing by the volume of the Voronoi cell of each atom in the lattice $\mathcal{L}_F$ (which is a hexagon of volume $\frac{\sqrt 3}{2}$) we obtain \eqref{paolo1} (the error of $c\sqrt n$ comes from the fact that for the atoms at the boundary it is not satisfied that their Voronoi cell is completely included in $E_n$ and the fact that the sum of volumes of Voronoi cells of atoms at the boundary is bounded by $\textrm{Per} (E_n) \sqrt n$)).

 We now claim that  
   \begin{equation} \label{paolo3}   
\frac{\PPP\widetilde V_n \EEE (\sqrt{n}\widetilde D_n)+6c_F \#\widetilde D_n}{\sqrt n}=\PPP\mathcal{E}_n\EEE (E_n),    
 \end{equation}  
 \PPP where $\widetilde V_n$ is the generalization of $V_n$ (see \eqref{V}) to configurations with a number of atoms different than $n$, i.e., 
 $$
 \widetilde V_n (D_k) := \sum_{i\neq j} v_{FF}(|d_i-d_j|)\,+\,  \sum_{i=1}^k v^1(d_i), 
 $$
 for every configuration $D_k:=\{d_1,\dots,d_k\}  \in \mathcal{C}_k$. \EEE
 The claim easily follows from the observation that each side $S_{k,n}$ of $E_n$, $k=1,\dots,m$, for $n$ large enough,  intersects  
  $\sqrt n \,\Gamma (\nu_{S_{k,n}}) \mathcal{H}^1(S_{k,n})+O(1)$ segments such that $|z_1-z_2|=1/\sqrt n$ and $(z_1,z_2) \in  \widetilde D_n \times (\PPP\mathcal{L}_F^n\EEE \backslash \widetilde D_n)$.
  %, so that in particular 
 % \begin{equation} \label{paolo3}   
%\frac{V_n (\widetilde D_n)+6c_F \#\widetilde D_n}{\sqrt n}=I_{x_F^n}^{e_S} (E_n).       
% \end{equation}  
 To \FFF see that \EEE we begin by considering a segment $\PPP L\EEE=(x,y)$ with endpoints $x,y \in  \mathcal{L}_F$. We denote the unit tangential and normal vector to $\PPP L\EEE$ by \PPP${\boldsymbol t_L}$ \EEE and  $\nu_{\PPP L\EEE}$, respectively. Obviously $y=x+{\boldsymbol t}$ for the vector ${\boldsymbol t}:=\PPP \mathcal{H}^1(L)\EEE {\boldsymbol t_L}=k_1 \tone+ k_2 \ttwo$ defined for some $k_1, k_2 \in \mathbb{Z}$. We restrict to the case in which $k_1,k_2 \in \mathbb{N}_0$ since the remaining case can be treated analogously.  Let $\overline{\Gamma}$ be \PPP the function \EEE such that $\overline{\Gamma}({\boldsymbol t_L})=\Gamma(\nu_L)/\PPP c_F\EEE$, i.e., 
 $$\overline{\Gamma}({\boldsymbol t_L})\PPP:=\EEE 2\left({\boldsymbol t_L^1} +\frac{{\boldsymbol t_L^2}}{\sqrt 3}\right)$$  
 for 
 $${\boldsymbol t_L}:={{\boldsymbol t_L^1} \choose {\boldsymbol t_L^2}},$$
and extend $\overline{\Gamma}$ by homogeneity. Notice that
    \begin{equation} \label{gamma12} 
      \overline{\Gamma}(k_1\tone)=2k_1 \qquad\textrm{and}\qquad \overline{\Gamma}(k_2\ttwo)=2k_2 . 
  \end{equation}  
 Let $P_L$ be the parallelogram with sides the vectors $x+k_1 \tone$ and $x+k_2\ttwo$. Furthermore, let  $P_L^+$ and $P_L^-$ the open triangles in which $L$ divides $P_L$. Notice that  inside $P_L$ we have $k_1$ lines parallel to $k_2\ttwo$, $k_2$ lines parallel to $k_1\tone$, and $k_1+k_2-1$ lines with varying length that are  parallel to the vector $\ttwo-\tone$. Since $x+{\boldsymbol t}$ intersects each of these last lines (and each line intersects $L$ one time), we have that $L$ exactly intersects 
 $$
 2(k_1+k_2)-1= \overline{\Gamma}(k_1\tone)+\overline{\Gamma}(k_2\ttwo)-1=\overline{\Gamma}({\boldsymbol t})-1=\PPP \mathcal{H}^1(L)\EEE\overline{\Gamma}({\boldsymbol t_L})-1=\PPP \frac{\mathcal{H}^1(L)}{\PPP c_F\EEE}\EEE\Gamma(\nu_L) -1 % {\color{pink} INPUT FIGURE }
 $$ 
 lines and hence,  $L$ intersects $ \mathcal{H}^1(L)\Gamma(\nu_L)$ segments $[z_1,z_2]$ such that $|z_1-z_2|=1$, $z_1\in  \mathcal{L}_F\cap(P_L^+\cup L)$, and  $z_2\in  \mathcal{L}_F\cap P_L^-$. 
 %The analogue observation is valid for any segment for $k_1,k_2 \in \mathbb{Z}$, which can be seen by rotation and the fact that $\Gamma$ is $\pi/3$ periodic. 
 Therefore, if we denote the $m$ vertices of $E_n$ by $v_{k,n}$ for $k=1,\dots,m$ and let $v_{m+1,n}=v_{1,n}$, then, for $n$ large enough, each side $S_{k,n}=[v_{k,n},v_{k+1,n}]$ of $E_n$   intersects  $\sqrt n \Gamma (\nu_{S_{k,n}}) |S_{k,n} |+O(1)$ segments such that $|z_1-z_2|=\frac{1}{\sqrt n}$ and $(z_1,z_2) \in  \widetilde D_n \times (  \frac{1}{\sqrt n}\mathcal{L}_F \backslash \widetilde D_n)$, where the contribution $O(1)$ takes into account  that the endpoints of $S_{k,n} $ might have a different  numbers of neighbors in $\widetilde D_n$. However such disturbance is of the order $O(1)$ \RRR since the angles of $E_n$ at the segment are approximately the same for all $n$.

\vspace{0.5cm}

\GGG
 
\textbf{\PPP Step 5 \PPP (Final recovery sequence)\EEE.}   Finally we variate the configuration $\widetilde D_n$ to obtain configurations $D_n$ such that $\# D_n=n$. 

If $\#\widetilde D_n<n$, \PPP then \GGG we can choose any set $\widehat{D}_n\subset \mathcal{L}_F^n\cap \PPP\{2R\geq x_2 >R\}\EEE$ for a fixed $ R>0$ large enough with cardinality $n-\#\widetilde D_n$ and surface energy  of order $\sqrt{n-\#\widetilde D_n}$, and define $$D_n:=\widetilde D_n\cup\widehat{D}_n.$$ %The new configuration we call $D_n$ and as a consequence of \eqref{paolo1} we have 
By  \eqref{paolo1} there exists a constant $C>0$ such that  \RRR
\begin{equation} \label{paolo2}  
\left| \frac{\PPP\widetilde V_n \EEE (\sqrt{n}\widetilde D_n)+6c_F \#\widetilde D_n}{\sqrt n}-\frac{\widetilde V_n  (\sqrt{n} D_n)+6c_F n}{\sqrt n}   \right| \GGG \leq C\frac{\sqrt{|\#\widetilde D_n-n|}}{\sqrt{n}} \RRR\to 0.
 \end{equation} 
 
\GGG Similarly, if $\#\widetilde D_n>n$ by \eqref{paolo1} for $n$ large enough we can define a configuration $D_n$ satisfying \eqref{paolo2} by taking away $\PPP \#\widetilde D_n\EEE-n$ atoms from $\widetilde D_n$,  for example we can define $D_n:=\widetilde D_n\setminus P_n$ for some parallelogram $P_n\subset E_n$. % is a parallelogram there exists a  without changing the surface energy, for example we consider a parallelogram $P_n\subset E_n$ with two sides contained in $\partial E_n$ such that the cardinality of $P_n\cap\mathcal{L}_F$ is  $M_n-n$ and we define $D_n:=\tilde D_n\setminus P_n$. %eliminate a parallelogram of atoms at a vertex $E_n$ not intersecting $\{x_2=0\}$. of  namely we eliminate the atoms in the parallelogram $\overline{P}_n$ with sides of order (by elliminating the paralellogram from $\tilde D_n$ whose sides are of order $\sqrt{M_n-n}$) making also the configuration $D_n$ which satisfies \eqref{paolo2}. 

Since $\mu_{D_n}$ weakly* converges to $\rho \chi_E$, the assertion follows from \eqref{paolo3}  and \eqref{paolo2}. \\ \RRR %\MMM(specify why such parallelogram exists) \RRR

\end{proof} 
\section{Proof of the main theorems \PPP in the dewetting regime} \label{sec:main_results}\EEE

\PPP We begin the section by stating a $\Gamma$-convergence results that is a direct consequence of Sections \ref{sec:lower_bound} and \ref{sec:upper_bound}. Recall that $\rho:=2/\sqrt{3}$.

 \begin{theorem}[$\Gamma$-convergence]  \label{Thm:Gamma_convergence} 
 Assume \eqref{dewetting_condition}.  The functional 
	\begin{equation}\label{converging_energies}
	E_n:= n^{-1/2}(I_n+6c_F n),
	\end{equation}
	where $I_n$ is defined by \eqref{radon_functional}, $\Gamma$-converges with respect to the weak* convergence of measures to the functional $I_\infty$ defined by
		\begin{equation}
	I_\infty(\mu):=\begin{cases}  \mathcal{E}(D_\mu), &  \text{if $\exists$ $D_\mu\subset\Rz^2\setminus S$  set of finite perimeter}\\
	& \hspace{15ex} \text{with  $|D_\mu|=\frac{\sqrt{3}}{2}$ such that $\mu=\frac{2}{\sqrt{3}}\chi_{D_\mu}$}
\\%\text{if $\mu\in\mathcal{M}_W$},\UUU\\
	+\infty, &\text{otherwise,}\\
	\end{cases}
	\end{equation}
	for every $\mu\in\mathcal{M}(\Rz^2)$. %, where 
 \end{theorem}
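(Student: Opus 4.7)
The plan is to deduce $\Gamma$-convergence by combining the lower-bound result (Theorem~\ref{lowerbound}) and the upper-bound result (Theorem~\ref{thm:upperbound}) from Sections~\ref{sec:lower_bound} and~\ref{sec:upper_bound}, after a compactness step that identifies the form of any finite-energy limit.

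For the recovery sequence (limsup inequality) I fix $\mu \in \mathcal{M}(\mathbb{R}^2)$. If $I_\infty(\mu) = +\infty$ the inequality is trivial for any choice of sequence, so I may assume $I_\infty(\mu) < +\infty$, which by definition means $\mu = \rho\, \chi_{D_\mu}$ for some finite-perimeter set $D_\mu \subset \mathbb{R}^2 \setminus S$ with $|D_\mu| = \sqrt{3}/2$. Theorem~\ref{thm:upperbound} then produces configurations $D_n \in \mathcal{C}_n$ with $\mu_{D_n} \stackrel{*}{\rightharpoonup} \mu$, and its proof actually establishes, via the identity $(V_n(D_n) + 6c_F n)/\sqrt{n} = \mathcal{E}_n(E_n) \to \mathcal{E}(D_\mu)$, that $E_n(\mu_{D_n}) \to \mathcal{E}(D_\mu) = I_\infty(\mu)$, which is exactly the desired recovery property.

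For the liminf inequality I take an arbitrary sequence $\mu_n \stackrel{*}{\rightharpoonup} \mu$ and reduce to the nontrivial case $\liminf_n E_n(\mu_n) < \infty$. Passing to a non-relabelled subsequence that realises the liminf, the finiteness of $E_n$ forces $\mu_n \in \mathcal{M}_n$, so $\mu_n = \mu_{D_n}$ with $D_n \in \mathcal{C}_n$, and the second assertion of Lemma~\ref{strip} gives the sharp perimeter bound $\#\partial D_n \leq C\sqrt{n}$. Associating to $D_n$ the auxiliary sets $H_n'$ of Section~\ref{sec:lower_bound}, Lemma~\ref{lemivan1} yields $\rho\, \chi_{H_n'} \stackrel{*}{\rightharpoonup} \mu$, while the estimates \eqref{comparison1} and \eqref{comparison2} together with the perimeter bound provide a uniform control of $\mathcal{H}^1(\partial H_n')$. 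Standard $BV_{\mathrm{loc}}$-compactness then delivers, up to a further subsequence, $\chi_{H_n'} \to \chi_D$ in $L^1_{\mathrm{loc}}$ for some finite-perimeter $D \subset \mathbb{R}^2 \setminus S$, whence $\mu = \rho\, \chi_D$. When $|D| = \sqrt{3}/2$, Theorem~\ref{lowerbound} applies verbatim and delivers $\liminf_n E_n(\mu_n) \geq \mathcal{E}(D) = I_\infty(\mu)$, closing the main case.

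The only remaining obstacle, and in my view the main technical point, is the mass-escape scenario $|D| < \sqrt{3}/2$, in which $I_\infty(\mu) = +\infty$ must still be produced in the limit even though the sequence could a priori keep bounded energy by splitting into pieces that drift apart along the infinite substrate. I would address this by a localisation of the proof of Theorem~\ref{lowerbound} to a sequence of growing vertical strips, combined with a concentration-compactness-type decomposition of $D_n$: on each strip the Reshetnyak-type estimate from the proof of Theorem~\ref{lowerbound} recovers the perimeter of the corresponding truncation of $D$, while the Wulff-type isoperimetric lower bound for isolated triangular-lattice clusters from~\cite{Yeung-et-al12} forces every escaping piece carrying positive limit mass to contribute a strictly positive surface term that survives the $n^{-1/2}$ rescaling; summing these contributions drives the liminf to $+\infty$, as required for $\Gamma$-convergence.
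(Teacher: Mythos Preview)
Your treatment of the limsup inequality and of the ``main'' liminf case (when the limit is already of the form $\rho\chi_D$ with $|D|=\sqrt3/2$) matches the paper exactly: the paper's proof is a one-line appeal to Theorems~\ref{lowerbound} and~\ref{thm:upperbound}, and you supply the same argument with a bit more detail.

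Where you go further than the paper is in worrying about the mass-escape case $|D|<\sqrt3/2$, and here your proposed fix does not work. A single (approximately) hexagonal Wulff configuration $D_n\in\mathcal{C}_n$ placed at height of order $n$ away from the substrate satisfies $\mu_{D_n}\stackrel{*}{\rightharpoonup}0$ while $E_n(\mu_{D_n})$ converges to the free Wulff energy, a \emph{finite} positive constant. Your concentration-compactness sketch asserts that each escaping piece ``contributes a strictly positive surface term'' and that ``summing these contributions drives the liminf to $+\infty$''; but in this example there is a single escaping piece, its surface contribution is finite, and the sum is finite. No decomposition into strips changes this. So the liminf inequality $\liminf_n E_n(\mu_n)\ge I_\infty(\mu)$ genuinely fails at $\mu=0$, and the full $\Gamma$-convergence as literally stated cannot be rescued by the route you propose.

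In other words, the subtlety you flagged is real, and the paper's terse proof simply does not address it. The paper's downstream results (Corollary~\ref{Cor:trasformed_convergence} and Theorem~\ref{thm:convergence_minimizers}) are nonetheless unaffected, because compactness is obtained separately through the transformation $\mathcal{T}$ and Theorem~\ref{compactnesstheorem}, which force the limit into $\mathcal{M}_W$, the only regime in which Theorem~\ref{lowerbound} is actually invoked.
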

 
	 \begin{proof} 
	In view of the definition of $\Gamma$-convergence the Assertion directly follows from the lower and upper bound provided by Theorems \ref{lowerbound} and  \ref{thm:upperbound}, respectively.
\end{proof}

\EEE

\begin{comment}
 \begin{theorem} \label{lowerbound} 
% Let $H_n, H_n',E \subset B_R$, $\mu_{D_n}$ be defined as above.	
\PPP If  $\{D_n\}$ is a sequence of %almost-connected \ZZZ(since we are using Theorem \ref{compactnesstheorem}) \PPP 
configurations such that  
$$\mu_{D_n} \stackrel{*}{\rightharpoonup} \rho\chi_{\PPP D\EEE}$$ 
weakly* with respect to the convergence of measures, where $\mu_{D_n}$ are the associated empirical measures of $D_n$, $\rho:=2/\sqrt{3}$, and $D\subset\Rz^2\setminus S$ is a set of finite perimeter with $|D|=\sqrt{3}/2$, then
 \begin{eqnarray} \label{konacnopaolo}
  \liminf_{n \to \infty} E_n (\mu_{D_n}) &\geq& \mathcal{E}(D).
  \end{eqnarray} 
%where in the definition of $\mathcal{E}$ we define $ \psi:=2c_{F}$ and $$\sigma:=2c_{F}-\frac{c_S}{q}.$$ %where $\widetilde{D}_n$ denotes the transformed configuration of a configuration $D_n$.
 \EEE 
%\begin{equation} \label{konacnopaolo}\GGG
%I_0^{e_S} (E):=\begin{cases}
	%I_0^{e_S} (E)  &=&
%	 2c_F   +\left( 2c_F-\frac{c_S}{e_S}\right) \mathcal{H}^1(\partial^*E \cap\{x_2=0\})& \textrm{if } \MMM q\neq 1 \EEE, \\
	%I_0^{e_S} (E) &=& 
%	2c_F \int_{\partial^* E \cap \{x_2>0\}} \Gamma (\nu_E) d \mathcal{H}^1+\left( 2c_F-c_S\right) \mathcal{H}^1(\partial^*E \cap\{x_2=0\}) &\textrm{if } e_S \leq 1.
%	\end{cases}
%	\RRR
%\end{equation} 

 \end{theorem}
 \begin{theorem} \label{thm:upperbound} \ZZZ(I took away that it is valid only for $e_S=1$) 
\PPP For \EEE every set $D \subset \Rz^2\setminus S$  of finite perimeter such that $|D|=\sqrt{3}/2$ there exists a sequence of configurations \PPP $D_n \in\mathcal{C}_n$ \EEE such that the \PPP corresponding \EEE associated empirical measures $\mu_{D_n}$    weak* converge to $\rho \chi_D$ \PPP with $\rho:=2/\sqrt{3}$, and $I_n (\mu_{D_n}) \to \mathcal{E}(D)$.\EEE
\end{theorem}

\end{comment}

\PPP
We notice that Theorem \ref{Thm:Gamma_convergence}  is not enough to conclude Assertion 3. of Theorem \ref{thm:convergence_minimizers}. In fact, the compactness provided for energy equi-bounded sequences $D_n\in\mathcal{C}_n$ by Theorem \ref{compactnesstheorem} of Section \ref{sec:compactness} holds only for almost-connected configurations $D_n$. Therefore, as detailed in the following result, we can deduce the convergence of a subsequence of minimizers only after performing (for example) the  transformation $\mathcal{T}$ given by Definition \ref{transformation}, which does not change the property of being a minimizer.

\PPP

  \begin{corollary} \label{Cor:trasformed_convergence}
  Assume \eqref{dewetting_condition}. For every sequence  of minimizers $\mu_n\in\mathcal{M}_n$ of $E_n$, there exists a \FFF \emph{(}\EEE possibly different\FFF\emph{)} \EEE sequence of minimizers $\widetilde \mu_n\in\mathcal{M}_n$ of $E_n$ that admits a subsequence  converging with respect to the weak* convergence of measures to a minimizer of  $I_\infty$ in \FFF
	\begin{align*}
\mathcal{M}_W:=\bigg\{\mu\in\mathcal{M}(\Rz^2)\ :\ \text{$\exists$ $D\subset\Rz^2\setminus S$  set of finite  perimeter,}  \text{ bounded, }  \\%\label{M_Wulff}\\
	& \hspace{-35ex} \text{ with  $|D|=\frac{1}{\rho}$, and such that $\mu=\rho\chi_D$}\bigg\}. 
	\end{align*} \EEE
    \end{corollary}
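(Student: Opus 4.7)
The plan is to combine the transformation $\mathcal{T}$ of Definition \ref{transformation}, the compactness Theorem \ref{compactnesstheorem}, and the $\Gamma$-convergence Theorem \ref{Thm:Gamma_convergence}. Starting from a sequence of minimizers $\mu_n=\mu_{D_n}\in\mathcal{M}_n$ of $E_n$ with $D_n\in\mathcal{C}_n$, I would first define $\widetilde D_n:=\mathcal{T}(D_n)$ and $\widetilde\mu_n:=\mu_{\widetilde D_n}$. By property (iii) listed right after Definition \ref{transformation} one has $V_n(\widetilde D_n)\leq V_n(D_n)$, so $\widetilde D_n$ is itself a minimizer of $V_n$ in $\mathcal{C}_n$; by property (i) each $\widetilde D_n$ is almost connected. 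This gives a candidate sequence $\widetilde\mu_n$ of minimizers of $E_n$ that is now amenable to Theorem \ref{compactnesstheorem}.

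Next I would verify the energy equi-boundedness required to apply Theorem \ref{compactnesstheorem}. Picking any fixed bounded set $D\subset\Rz^2\setminus S$ of finite perimeter with $|D|=1/\rho$ (e.g.\ a half-disc of the right area placed above $\partial S$, which shows in particular that $\mathcal{M}_W\neq\emptyset$), Theorem \ref{thm:upperbound} furnishes a recovery sequence $D_n^{\rm rec}\in\mathcal{C}_n$ with $E_n(\mu_{D_n^{\rm rec}})\to\mathcal{E}(D)<\infty$. By minimality $E_n(\widetilde\mu_n)\leq E_n(\mu_{D_n^{\rm rec}})$ is bounded, hence via \eqref{converging_energies} one has $V_n(\widetilde D_n)\leq -6c_F n+C\sqrt{n}$ for some $C>0$. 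Theorem \ref{compactnesstheorem} then produces integers $t_n\in\Zz$ and a non-relabelled subsequence such that $\widetilde\mu_n(\,\cdot\,+t_n\tone)\stackrel{*}{\rightharpoonup}\mu_\infty$ in $\mathcal{M}(\Rz^2)$ with $\mu_\infty\in\mathcal{M}_W$. I would then absorb this translation into the definition of $\widetilde\mu_n$; this is licit because, as already stressed in the second assertion of Theorem \ref{compactnesstheorem}, the translations $t_n\tone$ can be chosen so that the translated configuration still lies in $\mathcal{C}_n$ and has the same value of $V_n$, hence remains a minimizer of $E_n$.

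To conclude that $\mu_\infty$ minimizes $I_\infty$ in $\mathcal{M}_W$, I would argue by a standard $\Gamma$-convergence/compactness pairing. For any competitor $\nu\in\mathcal{M}_W$ the upper-bound part of Theorem \ref{Thm:Gamma_convergence} (i.e.\ Theorem \ref{thm:upperbound}) provides a recovery sequence $\nu_n\stackrel{*}{\rightharpoonup}\nu$ with $E_n(\nu_n)\to I_\infty(\nu)$. Minimality of $\widetilde\mu_n$ gives $E_n(\widetilde\mu_n)\leq E_n(\nu_n)$, and combining with the $\Gamma$-liminf inequality of Theorem \ref{Thm:Gamma_convergence} yields
\[
I_\infty(\mu_\infty)\;\leq\;\liminf_{n\to\infty}E_n(\widetilde\mu_n)\;\leq\;\lim_{n\to\infty}E_n(\nu_n)\;=\;I_\infty(\nu).
\]
Since $\mu_\infty\in\mathcal{M}_W$ and $\nu\in\mathcal{M}_W$ is arbitrary, this shows $\mu_\infty$ is a minimizer of $I_\infty$ restricted to $\mathcal{M}_W$.

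The main obstacle I expect is the translation step: one has to make sure that replacing $\widetilde\mu_n$ by its horizontal translate $\widetilde\mu_n(\,\cdot\,+t_n\tone)$ preserves both membership in $\mathcal{M}_n$ and the value of $V_n$, and in particular the contribution of the substrate interactions stored in $E_S$. Because $e_S=q/p$, this imposes that the integers $t_n$ must be chosen compatibly with the period of $\partial\mathcal{L}_{FS}$ along $\tone$, which is exactly the content of the second statement of Theorem \ref{compactnesstheorem} and of the construction of $\mathcal{T}_2$ in Definition \ref{transformation}. Once this periodicity argument is invoked, the rest of the proof is a routine application of the abstract property that $\Gamma$-limits of minimizers are minimizers of the $\Gamma$-limit, provided a compactness statement is available along the selected sequence.
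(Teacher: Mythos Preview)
Your proposal is correct and follows essentially the same route as the paper: apply $\mathcal{T}$ to pass to almost-connected minimizers, invoke Theorem \ref{compactnesstheorem} for compactness, and use Theorem \ref{Thm:Gamma_convergence} to identify the limit as a minimizer of $I_\infty$; the paper's own proof is in fact terser than yours and leaves the energy equi-boundedness and the standard $\Gamma$-convergence minimizer argument implicit. One small remark: you conclude only that $\mu_\infty$ minimizes $I_\infty$ \emph{restricted to} $\mathcal{M}_W$, but since Theorem \ref{thm:upperbound} produces recovery sequences for \emph{all} sets $D$ of finite perimeter (not just bounded ones) and $I_\infty=+\infty$ off this class, your inequality $I_\infty(\mu_\infty)\leq I_\infty(\nu)$ actually holds for every $\nu\in\mathcal{M}(\Rz^2)$, which is what the statement intends.
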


\begin{proof}
Let $\mu_n\in\mathcal{M}_n$ be minimizers   of $E_n$. By \eqref{radon_space}, \eqref{radon_functional}, and \eqref{converging_energies} there exist configurations $D_n\in\mathcal{C}_n$ such that $\mu_n:=\mu_{D_n}$. Let   $\mathcal{T}(D_n)\in\mathcal{C}_n$ be the transformed configurations associated to $D_n$  by Definition \ref{transformation}. We notice that the sequence of measures 
$$
\widehat \mu_n:=\mu_{\mathcal{T}(D_n)}
$$
is also a sequence of minimizers of $E_n$, since by Definition \ref{transformation} and \eqref{energy_equivalence} we have that
$$
E_n(\widehat \mu_n)\leq E_n(\mu_n). 
$$
Therefore, by Theorem \ref{Thm:Gamma_convergence} and  Theorem \ref{compactnesstheorem} we obtain that there exist  a sequence of vectors  $a_n:=t_n\tone$ for $t_n\in\Zz$, an increasing sequence $n_k$, $k\in\Nz$, and a measure $\mu\in \mathcal{M}_W$   such that   $\widetilde \mu_{n_k}\rightharpoonup^*\mu$ in $\mathcal{M}(\Rz^2)$, where
$$
\widetilde \mu_n:=\widehat \mu_n(\cdot+a_{n}).
$$
\end{proof}

\EEE

\PPP In view of Theorem \ref{connectness} we can improve the previous result and in turns, prove the convergence of minimizers (up to a subsequence) directly without passing to an  auxiliary sequence of minimizers obtained by performing the transformation $\mathcal{T}$ given by Definition \ref{transformation}. In fact, Theorem \ref{connectness} allows to exclude the possibility that a sequence of (not almost-connected) minimizers $\mu_n\in\mathcal{M}_n$ losses mass in the limit. 

%\begin{remark}
%Notice that we did not prove the compactness of the sequence of minimizers, but the compactness only up to the transformation by the steps (i)-(iii). This, however,  influences the main consequence of the $\Gamma$ convergence. Namely the sequence of minimizers converges (only after transformation which does not change the property of being a minimizer) to the minimizer of the limiting functional. \GGG In principle it could be possible that the sequence of minimizers losses mass in the limit, situation that we exclude for (global) minimizers with the Theorem \ref{connectness}. %\end{remark}

\EEE
 	 \begin{proof}[Proof of Theorem \ref{connectness}]
	 \PPP Let $\widehat{D}_n$ be such that 
$$V_n(\widehat{D}_n)=\min_{D_n\in\mathcal{C}_n}{V_n(D_n)},$$
and select for every $\widehat{D}_n$ a connected component $\widehat{D}_{n,1}\subset \widehat{D}_n$ with  largest cardinality. 
\EEE
  We assume by contradiction that 
  $$\liminf_{n \to \infty}   \mu_{\widehat D_{n}} (\widehat D_{n,1})<1,$$ 
   and we select a subsequence \UUU $n_k$  such that 
\begin{equation}\label{contradiction}
 \lim_{k \to \infty} \mu_{\widehat D_{n_k}} (\widehat D_{n_k,1})=\liminf_{n \to \infty}   \mu_{\widehat D_{n}} (\widehat D_{n,1})<1.
 	\end{equation}
\PPP	
	By Corollary \ref{Cor:trasformed_convergence} there exists  a (possibly different) sequence of minimizers $\widetilde \mu_{n_k}\in\mathcal{M}_{n_k}$ of $E_{n_k}$ that (up to passing to a non-relabelled subsequence)  converge with respect to the weak* convergence of measures to a minimizer $\mu\in\mathcal{M}_W$ of  $I_\infty$. Therefore,  there exists a bounded   set $D\subset\Rz^2\setminus S$ of finite  perimeter with $|D|=1/\rho$ such that $\mu= \rho  \chi_D$ and $\widetilde \mu_{n_k}$ converge with respect to the weak* convergence to $ \rho\chi_D$.
	\EEE
	
	We claim that 	
	\begin{equation}\label{claim_positive}
	m_0:=\mathcal{E}(D)>0,
	\end{equation}
	and observe that \eqref{claim_positive} follows from 
	\begin{equation} 
	\label{nada1}
	\int_{\partial^* D \cap \{x_2>0\}} \Gamma (\nu_{D} ) d \mathcal{H}^1  \geq  \PPP 2c_F\EEE\mathcal{H}^1(\partial^*D \cap\{x_2=0\}). 
	\end{equation} 
In order to prove \eqref{nada1} we  first show that
\begin{equation} 
	\label{normal_around} 
	\int_{\partial^* D} \nu_D d \mathcal{H}^1=0, 
		\end{equation} 
by taking $\varphi_i \in C_c^{\PPP1\EEE}(\mathbf{R}^2\PPP;\mathbf{R}^2\EEE)$, $\varphi_i=\ti$ on some  open  set $F$ such that $D\subset \subset F$, for $i=1,3$, in the identity 
%$$   \int_{\partial^* D} \langle \nu_D,\varphi_i \rangle dx= \int_{F} \langle D\chi_D,\varphi_i\rangle dx=\int_{F}\chi_D\textrm{div} \varphi_i dx =0.$$
$$   \int_{\partial^* D} \langle \nu_D,\varphi_i \rangle \PPP d \mathcal{H}^1\EEE= \PPP-\EEE \int_{D}\textrm{div} \varphi_i dx \PPP=-\int_{D}\textrm{div} \ti dx \EEE=0,$$
where we used   the definition of reduced boundary and the  \emph{generalized Gauss-Green formula} \cite[Theorem 3.36]{AFP} for sets of finite perimeter. \EEE
Then, from \eqref{normal_around} it follows that 
$$ \int_{\partial^* D \cap \{x_2>0\}} \nu_D d\mathcal{H}^1 =-\mathcal{H}^1 (\PPP\partial^* D\EEE \cap \{x_2=0\})\tthree$$
and hence, \PPP since $\Gamma$ is convex and homogeneous, by Jensen's inequality and the fact that $\Gamma(\tthree)=2c_F$  we conclude that \EEE
$$  \int_{\partial^* D \cap \{x_2>0\}} \Gamma (\nu_{D} ) d \mathcal{H}^1 \geq L \Gamma ( \frac{1}{L}\int_{\partial^*`D \cap \{x_2>0\}}\nu_{D} d\mathcal{H}^1 )=  \PPP 2c_F\EEE\mathcal{H}^1 (\PPP\partial^* D\EEE \cap \{x_2=0\}), $$
where $L\PPP:=\EEE\mathcal{H}^1(\partial^*D \cap \{x_2>0\})$, \PPP which is \eqref{nada1}. \EEE

\FFF
We claim that there exist configurations $\widetilde D_{n_k}\in\mathcal{C}_{n_k}$ defined by
$$
\widetilde D_{n_k}:=\widetilde D_{n_k}^1\cup \widetilde D_{n_k}^2, %=(\overline D_{n_k}\setminus\overline D_{n_k,1})\cup(\overline D_{n_k,1}+t_n\tone),
$$
where $\widetilde D_{n_k}^1$ and $\widetilde D_{n_k}^2$ are configurations such that:
\begin{itemize}
\item[(i)] $\textrm{supp } \mu_{\widetilde D_{n_k}^1}\subset B(x_1,R_1)$ and $\textrm{supp } \mu_{\widetilde D_{n_k}^1}\subset B(x_2,R_2)$ for some   $x_1,x_2\in\Rz^2$ and $R_1,R_2>0$ with $ B(x_1,R_1) \cap B(x_2,R_2)=\emptyset$, 
\item[(ii)] the energy is preserved, i.e.,  $$ %\begin{equation} \label{eqivan103'}
 V_{n_k}(\widetilde D_{n_k})=V_{n_k}(\widehat D_{n_k}),
$$% \end{equation} 
 \item[(iii)]  the following inequalities hold: %\begin{equation} \label{eqivan103}
 $$
\limsup_{k \to \infty } \mu_{\widetilde D_{n_k}}(\widetilde D_{n_k}^1)>0 \quad\text{and}\quad \limsup_{k \to \infty } \mu_{\widetilde D_{n_k}}(\widetilde D_{n_k}^2)>0. 
$$
%\end{equation} 
\end{itemize}

Under the further assumption that
\begin{equation} \label{eqivan101} 
 \lim_{k \to \infty} \mu_{\widehat D_{n_k}} (\widehat D_{n_k,1})>0, 
\end{equation} 
we can explicitly define  $\widetilde D_{n_k}^1:=\FFF\mathcal{T}(\widehat{D}_{n_k} \backslash \widehat{D}_{n_k,1})\EEE$  and  $\widetilde D_{n_k}^2:=\widehat D_{n_k,1}+tq\tone$ for some large $t\in\Zz$ (see \eqref{eSratio} for the definition of $q$). In fact,  the configurations  $\widetilde D_{n_k}^1$ and $\widetilde D_{n_k}^2$ are bounded because by Definition \ref{transformation} they are almost connected and hence, property (i) is satisfied provided that  $t\in\Zz$ is chosen large enough. Furthermore, again by Definition \ref{transformation} (and the translation of $\widehat D_{n_k,1}$ of $q$-multiples) property (ii) is verified. Finally, property (iii) directly follows from %  in accordance with \eqref{eqivan103'}. 
%Note that in this case \eqref{eqivan103} follows from 
\eqref{contradiction} and \eqref{eqivan101}. 

If condition \eqref{eqivan101} is not satisfied, the definition of  $\widetilde D_{n_k}^1$ and  $\widetilde D_{n_k}^2$ is more involved. We choose an order among the connected components of $\widehat D_{n_k}$ other than $\widehat D_{n_k,1}$, say $\widehat D_{n_k,\ell}$ for $\ell\geq2$  with the convention that $\widehat D_{n_k,\ell}:=\emptyset$ for $\ell$ larger than the number of connected components of $\widehat D_{n_k}$, and we observe that 
$$
0\leq \lim_{k \to \infty} \mu_{\widehat D_{n_k}} (\widehat D_{n_k,\ell})\leq  \lim_{k \to \infty} \mu_{\widehat D_{n_k}} (\widehat D_{n_k,1})=0, 
$$
so that
\begin{equation}\label{componentszero}
\lim_{k \to \infty} \mu_{\widehat D_{n_k}} (\widehat D_{n_k,\ell})=0
\end{equation} 
for all $\ell\in\Nz$. Furthermore, from \eqref{componentszero} and the fact that 
\begin{equation}\label{totalnk}
\mu_{\widehat D_{n_k}}\left( \bigcup_\ell  \widehat D_{n_k,\ell}\right)=1,
\end{equation} 
it follows that there exist $J_k\geq2$ such that 
\begin{equation}
 \sum_{j=1}^{J_k-1}\mu_{\widehat D_{n_k}} (\widehat D_{n_k,\ell_j})\leq\frac13 \quad\text{and}\quad \sum_{j=1}^{J_k}\mu_{\widehat D_{n_k}} (\widehat D_{n_k,\ell_j})>\frac13.
\end{equation}  
We define
$$
\widetilde  D_{n_k}^1:=\mathcal{T}\left(\bigcup_{j=1}^{J_k}\widehat D_{n_k,\ell_j}\right)\quad\text{and}\quad \widetilde D_{n_k}^2:=\mathcal{T}\left(\widehat{D}_{n_k} \backslash \left(\bigcup_{j=1}^{J_k}\widehat D_{n_k,\ell_j}\right)\right)+t'q\tone
$$ 
for a large $t'\in\Nz$. As in the previous case properties (i) and (ii) directly follow from Definition \ref{transformation} %the fact that $\widetilde  D_{n_k}^1$ and $\widetilde  D_{n_k}^2$ are almost connected and so bounded and   Definition \ref{transformation} 
and the choice of the $\tone$-translation by a  $q$-multiple $t'\in\Nz$ large enough, where $q$ is defined in \eqref{eSratio}.  
%$\widehat D_{n_k,1}$ of $q$-multiples)
%and observe that there exist    $R_1,R_2>0$ and $x_1\in\Rz^2$ such that $\textrm{supp } \mu_{\widetilde D_{n_k}^1}\subset B(x_1,R_1)$ and $\textrm{supp } \mu_{\widetilde D_{n_k}^3}\subset B(x_1,R_2)$. We then define  $\widetilde D_{n_k}^2:=\widetilde D_{n_k}^3+t'q\tone$ with $t'\in\Nz$ such that $t'q\geq R_1+R_2+1$. We notice that (i) is satisfied with $x_2:=x_1+t'q$ and that 
Finally, property (iii)  is also satisfied since 
\begin{align*}
& \limsup_{k \to \infty } \mu_{\widetilde D_{n_k}}(\widetilde D_{n_k}^2)\\
  &\geq  \lim_{k \to \infty } \mu_{\widehat D_{n_k}}\left(\bigcup_{\ell\in\Nz}  \widehat D_{n_k,\ell}\right) -  \limsup_{k \to \infty } \mu_{\widetilde D_{n_k}}\left(\bigcup_{j=1}^{J_k-1}\widehat D_{n_k,\ell_j}\right) - \lim_{k \to \infty} \mu_{\widehat D_{n_k}} (\widehat D_{n_k,J_k})\\
 &=1-  \limsup_{k \to \infty } \mu_{\widetilde D_{n_k}}\left(\bigcup_{j=1}^{J_k-1}\widehat D_{n_k,\ell_j}\right) -0\geq \frac23>0,
 \end{align*}
where we used \eqref{totalnk} and \eqref{componentszero}. Therefore, the claim is verified. 
\EEE

By \FFF such claim and \EEE the same arguments used in Theorem \ref{compactnesstheorem} we deduce that (up to a non-relabeled subsequence) 
$$
\mu_{\widetilde D_{n_k}^j}\rightharpoonup^*\frac{1}{|D^j|}\chi_{D^j}
$$
 in $\mathcal{M}(\Rz^2)$ for $j=1,2$, with $D^j$  disjoint bounded sets of finite perimeter such that
$$
D=D^1\cup D^2.
$$
Therefore, if with $\lambda_j:=|D^j|$, then
\begin{equation} 
	\label{sum_lambdas} 
\lambda_1+\lambda_2=|D| =\frac{\sqrt 3}{2}
		\end{equation} 
with both $\lambda_1>0$  and $\lambda_2>0$, respectively, because of \eqref{contradiction} and the fact that $\FFF\textrm{supp } \mu_{\widetilde D_{n_k}^2}\EEE\subset B(x_2,R_2)$ are the connected components of $\widetilde D_{n_k}$ with largest cardinality  in the case when \eqref{eqivan101} is satisfied and because of \eqref{eqivan103} when \eqref{eqivan101} is not satisfied.  
Finally,  by scaling arguments we conclude 
$$ 
m_0=\PPP\mathcal{E}\EEE(\PPP D\EEE)=\PPP\mathcal{E}\EEE(\PPP D^1\EEE)+\PPP\mathcal{E}\EEE(\PPP D^2\EEE)= \GGG \sqrt{\frac{2}{\sqrt 3}} \UUU \left(\sqrt{\lambda_1} m_0 +\sqrt{\lambda_2} m_0\right)
$$
and hence, \PPP by \eqref{claim_positive} and \eqref{sum_lambdas}  we obtain \EEE
$$\sqrt{\lambda_1} +\sqrt{\lambda_2}=\sqrt{\sqrt{3}/2}=\sqrt{\lambda_1+\lambda_2},$$
\PPP which \EEE implies \UUU $\lambda_1=0$ or $\lambda_2=0$ \PPP that \EEE is a contradiction.

\end{proof}

We are now ready to prove  Theorem \ref{thm:convergence_minimizers}.

	 \begin{proof}[Proof of Theorem \ref{thm:convergence_minimizers}]
\PPP Assertions 1.\ and 2. directly follow from Theorem \ref{Thm:Gamma_convergence} and Corollary \ref{Cor:trasformed_convergence}, respectively. It remains to show Assertion 3. to which the rest of the proof is devoted.\EEE

\PPP
Let $\mu_n\in\mathcal{M}_n$ be  minimizers of $E_n$. By Corollary \ref{Cor:trasformed_convergence}  there exist another sequence of minimizers $\widetilde \mu_n\in\mathcal{M}_n$ of $E_n$, an increasing sequence $n_k$ for $k\in\Nz$, and a measure $\mu\in \mathcal{M}_W$ minimizing $I_\infty$ such that   
\begin{equation} \label{conclusion1}
\widetilde \mu_{n_k}\rightharpoonup^*\mu
\end{equation} 
 in $\mathcal{M}(\Rz^2)$. In particular, from the proof of Corollary \ref{Cor:trasformed_convergence}  we observe that  
$$
\widetilde \mu_n:=\mu_{\mathcal{T}(D_n)}(\cdot+t_{n}\tone)
$$
for  some  integers $t_n\in\Zz$, and for configurations $D_n\in\mathcal{C}_n$ such that $\mu_n:=\mu_{D_n}$, where  $\mathcal{T}(D_n):=\mathcal{T}_2(\mathcal{T}_1(D_n))$ (see Definition \ref{transformation}). % (which exist by \eqref{radon_space}, \eqref{radon_functional}, and \eqref{converging_energies}). 
Furthermore,  by \eqref{radon_functional} and \eqref{converging_energies} the configurations  $D_n$ are minimizers of $V_n$ in $\mathcal{C}_n$ and hence, $\mathcal{T}_1(D_n)=D_n$ and by Theorem \ref{connectness} we have that, up to a non-relabelled subsequence, 
\begin{equation} \label{conclusion2}
\lim_{k \to \infty} \mu_{D_{n_k}} (D_{{n_k},1})=1, 
\end{equation} 
where $D_{{n_k},1}$ is a connected component of $D_{n_k}$ (with the largest cardinality). We also observe that the transformation $\mathcal{T}_2$ consists in translations  of the connected components of $D_{n_k}$ with respect to a vector  in the direction $-\tone$ with norm (depending on the component) in $\Nz\cup\{0\}$. Let  $t'_{n_k}\in\Nz\cup\{0\}$ be the norm of the vector for the connected component $D_{{n_k},1}$.  From \eqref{conclusion1} and \eqref{conclusion2} it follows that
$$
\mu_{D_{n_k}}(\cdot+(t_{n_k}-t'_{n_k})\tone)\rightharpoonup^*\mu 
$$
and hence, we can choose $c_{n_k}:=t_{n_k}-t'_{n_k}\in\Zz$.

\EEE

%Assertion 3. follows from the fact that every minimizing sequence after transformation given by steps (i)-(iii) in \PPP Proposition \EEE  \ref{zadnje} converges to the minimizer of the limiting functional follows directly from Theorem \ref{lowerbound} and Theorem \ref{thm:upperbound}. However since from Theorem \ref{connectness} it follows that every global minimizer has one component that carries mass the claim of Theorem \ref{thm:convergence_minimizers} is satisfied. 
\end{proof}

\EEE

\section*{\PPP Appendix}

\PPP In this Section we collect some auxiliary results used in the proofs of previous sections for the convenience of the Reader.\EEE
%Before we give the lower semicontinuity result we need one simple claim.
%\begin{definition}
%	\PPP Let $\Omega \subset \mathbb{R}^2$ be an open set. We say that a sequence of measures $(\kappa_n)_n$  is \PPP uniformly \OOO absolutely continuous with respect to the \PPP sequence \OOO of measures $(\mu_n)_n$; both of them defined on $\mathcal{B} (\Omega)$ if for each $\varepsilon>0$ there exists $\delta>0$ such that for every $n \in \mathbb{N}$ 
%	$$ \mu_n(A) < \delta \implies \kappa_n (A) < \varepsilon, \quad \forall A \in \mathcal{B} (\Omega) . $$ \label{defabscon}
%\end{definition}
\PPP \begin{lemma}\label{lmabscon}
	 Let $\Omega \subset \FFF \mathbb{R}^d\EEE$ be an open set. Let $\kappa_n$ and $\mu_n$ be two sequences of finite (positive) Borel measures \FFF on the $\sigma$-algebra on $\Omega$ denoted by $\mathcal{B} (\Omega)$ \EEE such that:
	 \begin{itemize}
	 \item[(i)] {$\sup_{n \in \mathbb{N} } (\mu_n(\Omega)+\kappa_n(\Omega))< \infty$},
	 \item[(ii)]  $(\kappa_n)_n$ is uniformily absolutely continuous with respect to $(\mu_n)_n$, i.e., for every $\varepsilon>0$ there exists $\delta>0$ such that  
	$$ \mu_n(A) < \delta \implies \kappa_n (A) < \varepsilon, $$ 
	 for every   $A \in \mathcal{B} (\Omega)$ and $n \in \mathbb{N}$. 
	 \end{itemize}
	 If there exist Borel measures $\kappa$ and $\mu$ on $\mathcal{B} (\Omega)$ such that
	$\kappa_n \stackrel{*}{\rightharpoonup} \kappa$ and $\mu_n \stackrel{*}{\rightharpoonup} \mu$, then $\kappa$ is absolutely continuous with respect to $\mu$.
\end{lemma}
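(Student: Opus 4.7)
The plan is to show that $\kappa(A)=0$ for every $A\in\mathcal{B}(\Omega)$ with $\mu(A)=0$. Since the weak* limit $\kappa$ of finite positive measures with uniformly bounded total mass is itself a finite Radon measure on $\Omega$, it is inner regular, so it will suffice to prove $\kappa(K)=0$ for every compact subset $K\subset A$. To do this I will approximate $\chi_K$ from above by a cut-off $\phi\in C_c(\Omega)$ supported in an open neighborhood of $K$ of small $\mu$-measure, and compare $\int\phi\,d\kappa_n$ with $\int\phi\,d\mu_n$ via hypothesis (ii).

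Fix $\varepsilon>0$ and let $\delta=\delta(\varepsilon)>0$ be given by the uniform absolute continuity, chosen so that $\delta<\varepsilon$; set $C:=\sup_n\kappa_n(\Omega)<\infty$. Since $\mu(K)=0$, outer regularity of $\mu$ allows me to choose an open $U\supset K$ with compact closure in $\Omega$ and $\mu(U)<\delta^2/2$, and then by Urysohn's lemma to pick $\phi\in C_c(\Omega)$ with $0\le\phi\le 1$, $\phi\equiv 1$ on $K$, and $\mathrm{supp}\,\phi\subset U$. The key step will be a layer-cake splitting at level $\delta$: on the open set $V:=\{\phi>\delta\}$ a Chebyshev bound gives $\mu_n(V)\le\delta^{-1}\int\phi\,d\mu_n$, and since $\int\phi\,d\mu_n\to\int\phi\,d\mu\le\mu(U)<\delta^2/2$ by weak* convergence, one has $\mu_n(V)<\delta$ for all sufficiently large $n$. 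Hypothesis (ii) then yields $\kappa_n(V)<\varepsilon$, and splitting $\int\phi\,d\kappa_n$ over $V$ and $V^c$ gives
\begin{equation*}
\int\phi\,d\kappa_n \;=\; \int_V \phi\,d\kappa_n + \int_{V^c}\phi\,d\kappa_n \;\le\; \kappa_n(V)+\delta\,\kappa_n(\Omega)\;\le\; \varepsilon+\delta C.
\end{equation*}
Passing to the limit using $\int\phi\,d\kappa_n\to\int\phi\,d\kappa$ and the bound $\kappa(K)\le\int\phi\,d\kappa$ then yields $\kappa(K)\le(1+C)\varepsilon$, and letting $\varepsilon\to 0$ concludes.

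The main obstacle is that the natural strategy of choosing $U\supset K$ with $\mu(\overline U)$ small does not directly work, since $\mu$ may put mass on $\partial U$ and one only controls $\limsup \mu_n(\overline U)\le \mu(\overline U)$; the layer-cake splitting of $\phi$ at level $\delta$ circumvents this by replacing the compact closure $\overline U$ with the open set $\{\phi>\delta\}$, on which Chebyshev gives uniform control of $\mu_n$ from the single bound on $\int\phi\,d\mu_n$. A secondary point to verify is that the weak* limit $\mu$ is a finite Radon measure on $\Omega$ and hence outer regular, which is standard given hypothesis (i) and the identification of $\mathcal{M}(\Omega)$ with the dual of $C_0(\Omega)$.
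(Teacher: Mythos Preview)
Your proof is correct. Both your argument and the paper's begin with the same reduction via inner regularity to compact $K\subset A$ with $\mu(A)=0$, but then diverge in how they control $\mu_n$ and $\kappa_n$ near $K$.

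The paper proceeds by constructing a \emph{continuity set}: it covers $K$ by finitely many balls $B(x_i,r_{x_i})\subset U$ with $\mu(\partial B(x_i,r_{x_i}))=0$, so that the open set $V=\bigcup_i B(x_i,r_{x_i})$ satisfies $\mu(\partial V)=0$, and then invokes the Portmanteau-type consequence $\mu_n(V)\to\mu(V)<\delta$. Hypothesis (ii) gives $\kappa_n(V)<\varepsilon$ for large $n$, and lower semicontinuity of $\kappa_n$ on the open set $V$ yields $\kappa(K)\le\kappa(V)\le\liminf_n\kappa_n(V)<\varepsilon$.

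Your route avoids the continuity-set construction entirely by working with a single test function $\phi\in C_c(\Omega)$ and splitting at level $\delta$ via Chebyshev. This is a genuine alternative: it uses only the defining property of weak* convergence (integrals against $C_c$ functions converge) and elementary inequalities, rather than the auxiliary facts that (a) for each $x$ almost every radius gives a $\mu$-null sphere and (b) weak* convergence implies convergence of measures on continuity sets. The paper's approach is perhaps more geometric and gives the cleaner bound $\kappa(K)<\varepsilon$ directly, while yours trades that for the slightly weaker $\kappa(K)\le(1+C)\varepsilon$ but requires less measure-theoretic machinery.
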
\EEE
\begin{proof} 
	Take $A \subset \Omega$ such that $\mu(A)=0$. Since $\kappa$ is regular Borel measure it is enough to prove that $\kappa(K)=0$ for every $K \subset A$, $K$ compact. Take an arbitrary $K \subset A$ compact and $\varepsilon>0$. By regularity of $\mu$ there exists $U \subset X$ open such that $A \subset U$ and $\mu(U) < \delta$, where $\delta$ is given by \PPP (ii). %Definition \ref{defabscon}.  
\EEE 	For every $x \in K$ we find a ball of radius $r_x$ such that $\mu (\partial B(x,r_x))=0$ and $B(x,r_x) \subset U$. Since $K$ is compact we can find a finite number of balls $(B(x_i, r_{x_i}))_{i=1,\dots, n}$ that cover $K$ and we define an open set $V \subset U$ as $V:= \cup_{i=1}^n B(x_i, r_{x_i})$. Obviously $\mu (\partial V)=0$. We have that $\mu (V) < \delta$ and $\mu_n(V) \to \mu(V)$. Thus there exists $n_0 \in \mathbb{N}$ such that $\mu_n(V) < \delta$, $\forall n \geq n_0$.  But then we have that  $\kappa_n(V)<\varepsilon$, $\forall n \geq n_0$. By the definition of weak star convergence we also have that
	$$ \kappa(K) \leq \kappa(V) \leq \liminf_{n \to \infty} \kappa_n(V) < \varepsilon. $$
	The claim follows by the arbitrariness of $\varepsilon$.   
\end{proof}
We now recall some claims about the \PPP \emph{special functions of bounded variation}  on \FFF an open \EEE set $A\subset\Rz^2$, namely $SBV(A)$  (see \cite{AFP} for more details). \EEE
We recall that the distributional gradient $Dg$ of a function $g \in SBV (A)$ can be decomposed
as:
$$ Dg = \nabla g \mathcal{L}^2 \measurerestr A+(g
^+-g^-) \otimes  \nu_g  \mathcal{H}^1 S_g, $$
where $\nabla g$ is the approximate gradient of $g$ , $S_g$ is the jump set of $g$, $\nu_g$ is a unit normal to
$S_g$, and $g^{\pm}$ are the approximate trace values of $g$ on $S_g$.
We recall a compactness result in $SBV$ and $SBV_{\textrm{loc}}$. 
\begin{theorem} \label{thsbv0}  Let A be bounded and open and let  $g_n$ be a sequence in $SBV (A)$. Assume that
there exists $p>1$ and $C>0$ such that
\begin{equation} \label{sbv1} 
\int_A |\nabla g_n|^p+\mathcal{H}^1(S_{g_n})+\|g_n\|_{L^{\infty} (A)}\leq C, \textrm{ for all } n \in \mathbb{N}. 
\end{equation}
Then, there exists $g \in SBV (A)$ such that, up to a subsequence,
\begin{enumerate} 
	\item
$g_n  \to g$ strongly in $L^1(A)$, 
\item $\nabla g_n \rightharpoonup \nabla g$ \textrm{weakly in } 
$L^1(A, \mathbb{R}^2)$,
\item $\liminf_{n \to \infty} \mathcal{H}^1 (S_{g_n} \cap A') \geq \mathcal{H}^1(S_g \cap A')$, for every open set $A' \subseteq A$.
\end{enumerate} 
\end{theorem}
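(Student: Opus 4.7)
The plan is to deduce the statement from Ambrosio's $SBV$ compactness and closure theorem. First, I would use the assumptions in \eqref{sbv1} to put $g_n$ in a bounded subset of $BV(A)$. Indeed, Hölder's inequality gives $\|\nabla g_n\|_{L^1(A)}\le |A|^{1-1/p}\|\nabla g_n\|_{L^p(A)}$, while $\|g_n\|_{L^\infty(A)}\le C$ and $\mathcal{H}^1(S_{g_n})\le C$ control the jump part of $Dg_n$, since
$$|Dg_n|(A)\le \|\nabla g_n\|_{L^1(A)}+2\|g_n\|_{L^\infty(A)}\,\mathcal{H}^1(S_{g_n}).$$
Together with the uniform $L^\infty$ bound this gives a uniform bound in $BV(A)\cap L^\infty(A)$. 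Standard $BV$ compactness (e.g.~\cite[Theorem~3.23]{AFP}) yields, up to a subsequence, strong $L^1(A)$ convergence $g_n\to g$ for some $g\in BV(A)\cap L^\infty(A)$, which is assertion~(1).

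Second, I would exploit the hypothesis $p>1$ to extract a further subsequence along which $\nabla g_n\rightharpoonup h$ weakly in $L^p(A;\mathbb{R}^2)$ for some $h$, by reflexivity of $L^p$; combined with the first step this also gives the weak $L^1$ convergence. The key nontrivial step is to identify $h$ with the approximate gradient $\nabla g$ of the $L^1$-limit, to show that $g\in SBV(A)$, and to obtain the lower-semicontinuity inequality for the jump set. These three facts are exactly the content of Ambrosio's $SBV$ closure/compactness theorem (see \cite[Theorem~4.7 and Theorem~4.8]{AFP}, or equivalently the corresponding statement in \cite[Section~4.1]{AFP}): under a uniform bound on $\int_A|\nabla g_n|^p$ with $p>1$ and on $\mathcal{H}^1(S_{g_n})$, any $L^1$-limit of an $L^\infty$-equibounded sequence $g_n\in SBV(A)$ belongs to $SBV(A)$, the approximate gradients converge weakly in $L^p$ to $\nabla g$, and $\mathcal{H}^1(S_g\cap A')\le\liminf_n \mathcal{H}^1(S_{g_n}\cap A')$ for every open $A'\subseteq A$.

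The main obstacle — and the reason the stronger assumption $p>1$ is indispensable — is ruling out a Cantor part in $Dg$. With only an $L^1$ bound on $\nabla g_n$ and a bound on $\mathcal{H}^1(S_{g_n})$, one obtains from general $BV$ compactness a limit $g\in BV(A)$ whose derivative could carry a nontrivial diffuse singular component $D^c g$ arising, for instance, from a concentration effect in the absolutely continuous parts. The assumption $p>1$ upgrades the weak-$\ast$ convergence of $\nabla g_n\,\mathcal{L}^2$ as measures to weak convergence in the reflexive space $L^p$, which via the Ambrosio lower-semicontinuity machinery (based on blow-up/slicing arguments and the chain rule in $SBV$) forces $D^c g=0$ and identifies the weak limit of $\nabla g_n$ with $\nabla g$. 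Once this structural fact is in hand, the lower-semicontinuity of $\mathcal{H}^1(S_{g_n}\cap A')$ is the standard part of the $SBV$ compactness statement, completing the proof.
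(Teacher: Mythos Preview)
Your proposal is correct, and in fact matches what the paper does: the paper does not give any proof of this statement at all, but simply recalls it as the standard $SBV$ compactness theorem from \cite{AFP} (it is stated in the Appendix under the heading ``We recall a compactness result in $SBV$ and $SBV_{\textrm{loc}}$''). Your identification of it with Ambrosio's closure/compactness theorem \cite[Theorems~4.7--4.8]{AFP} and the sketch you give of the argument are exactly the right way to justify the statement.
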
  
 We say that  $g_n \in SBV(A)$  weakly converge in $SBV(A)$ to
a function $g \in SBV(A)$, and we write that $g_n \rightharpoonup g$  in $SBV(A)$, if $g_n$ satisfy \eqref{sbv1}  for some
$p>1$ and $g_n \to g$ in $L^1(A)$. The corollary below easily follows by Theorem \ref{thsbv0}.
\begin{corollary} \label{corsbv2} 
 Let $g_n$ be a sequence in $SBV(\mathbb{R}^2)$. Assume that there exists $p>1$ and $C>0$  such that
\begin{equation} 
\int_{\mathbb{R}^2} |\nabla g_n|^p +\mathcal{H}^1(S_{g_n})+\|g_n\|_{L^{\infty} (\mathbb{R}^2)} \leq C. 
\end{equation} 
Then there exists $g \in SBV(\mathbb{R}^2)$ such that, up to a subsequence,  $(1)-(3)$ of Theorem \ref{thsbv0} holds for every open bounded set $A \subset \mathbb{R}^2$. 
\end{corollary}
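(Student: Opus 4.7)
The plan is to deduce the corollary from Theorem \ref{thsbv0} by a standard exhaustion and diagonal-subsequence argument, combined with the observation that $L^1_{\textrm{loc}}$ limits are unique and so the finite-volume limits can be glued consistently. Let me fix an increasing sequence of bounded open sets $A_k:=B(0,k)$ with $\bigcup_{k\in\mathbb{N}} A_k=\mathbb{R}^2$.

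First I would restrict the sequence $(g_n)$ to $A_1$. Since $\|g_n\|_{L^\infty(\mathbb{R}^2)}\leq C$ and the integrals of $|\nabla g_n|^p$ and of $\mathcal{H}^1(S_{g_n})$ over $A_1$ are bounded by the global bound $C$, hypothesis \eqref{sbv1} holds on $A_1$. Theorem \ref{thsbv0} then yields a subsequence $(g_n^{(1)})\subset(g_n)$ and a function $g^{(1)}\in SBV(A_1)$ such that assertions (1)-(3) of Theorem \ref{thsbv0} hold on $A_1$ for this subsequence. Applying the theorem again to $(g_n^{(1)})$ on $A_2$ produces a further subsequence $(g_n^{(2)})$ and a limit $g^{(2)}\in SBV(A_2)$, and iterating gives nested subsequences $(g_n^{(k)})$ with limits $g^{(k)}\in SBV(A_k)$. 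By uniqueness of $L^1(A_k)$ limits, $g^{(k+1)}|_{A_k}=g^{(k)}$ almost everywhere, so the formula $g(x):=g^{(k)}(x)$ for $x\in A_k$ unambiguously defines a measurable function $g:\mathbb{R}^2\to\mathbb{R}$ with $\|g\|_{L^\infty(\mathbb{R}^2)}\leq C$. I then extract the diagonal subsequence $\widetilde g_k:=g_k^{(k)}$, which is, from index $k_0$ on, a subsequence of $(g_n^{(k_0)})$ for every $k_0$.

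Next I would verify that $g\in SBV(\mathbb{R}^2)$. By construction $g|_{A_k}=g^{(k)}\in SBV(A_k)$ for every $k$, so $g\in SBV_{\textrm{loc}}(\mathbb{R}^2)$ with approximate gradient $\nabla g|_{A_k}=\nabla g^{(k)}$ and jump set $S_g\cap A_k=S_{g^{(k)}}$. The weak $L^1(A_k,\mathbb{R}^2)$-convergence of $\nabla\widetilde g_n$ to $\nabla g$ together with the uniform bound $\int_{A_k}|\nabla\widetilde g_n|^p\leq C$ gives, by weak lower semicontinuity of the $L^p$-norm, $\int_{A_k}|\nabla g|^p\leq C$, and assertion (3) applied on each $A_k$ yields $\mathcal{H}^1(S_g\cap A_k)\leq C$. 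Letting $k\to\infty$ via monotone convergence produces $\int_{\mathbb{R}^2}|\nabla g|^p+\mathcal{H}^1(S_g)\leq 2C$, so indeed $g\in SBV(\mathbb{R}^2)$.

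Finally, given any bounded open $A\subset\mathbb{R}^2$, I choose $k$ large enough that $A\subset A_k$. The diagonal sequence $\widetilde g_n$ is eventually a subsequence of $(g_n^{(k)})$, hence assertions (1) and (2) of Theorem \ref{thsbv0} on $A_k$ restrict to $A$, and for (3) it suffices to take any open $A'\subseteq A$ and apply part (3) of Theorem \ref{thsbv0} on $A_k$ to the open subset $A'\subseteq A_k$. This covers (1)-(3) on $A$ and concludes the argument. The only subtlety worth watching is the consistency of the nested limits $g^{(k)}$ under restriction, which is the only place where the $L^1$-strong convergence from Theorem \ref{thsbv0} is essential; everything else is a routine diagonal extraction and lower-semicontinuity bookkeeping.
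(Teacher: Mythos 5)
Your argument is exactly the one the paper has in mind: the paper offers no written proof beyond the remark that the corollary ``easily follows by Theorem \ref{thsbv0}'', and the intended content is precisely your exhaustion-plus-diagonal extraction on $A_k=B(0,k)$, with consistency of the local limits guaranteed by the strong $L^1(A_k)$ convergence. The mechanics are sound: the liminf inequality in assertion (3) survives passage to the diagonal subsequence because a liminf can only increase along subsequences, and on each $A_k$ the weak-$L^1$ convergence of gradients upgrades to weak-$L^p$ convergence thanks to the uniform $L^p$ bound, so your lower-semicontinuity bookkeeping for $\int|\nabla g|^p$ and $\mathcal{H}^1(S_g)$ is correct.

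One genuine caveat, at your final step ``so indeed $g\in SBV(\mathbb{R}^2)$'': with the definition used in the paper's reference \cite{AFP}, $SBV(\mathbb{R}^2)\subset BV(\mathbb{R}^2)\subset L^1(\mathbb{R}^2)$, and global integrability of $g$ does \emph{not} follow from the bounds $\|g\|_{L^\infty}\leq C$, $\int_{\mathbb{R}^2}|\nabla g|^p\leq C$, $\mathcal{H}^1(S_g)\leq C$ that you established. It can in fact fail under the hypotheses of the corollary: take $g_n(x):=\phi(x/n)$ with $\phi\in C_c^{\infty}(\mathbb{R}^2)$, $0\leq\phi\leq1$, $\phi\equiv1$ on $B(0,1)$, and $p=3$; then
$$\int_{\mathbb{R}^2}|\nabla g_n|^3=n^{-1}\|\nabla\phi\|^3_{L^3}\to0,\qquad S_{g_n}=\emptyset,\qquad \|g_n\|_{L^\infty}=1,$$
yet $g_n\to1$ in $L^1_{\textrm{loc}}$, and $g\equiv1\notin L^1(\mathbb{R}^2)$. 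So what your construction actually yields --- and all that can be true in general --- is $g\in SBV_{\textrm{loc}}(\mathbb{R}^2)$, bounded and with globally finite $\int|\nabla g|^p$ and $\mathcal{H}^1(S_g)$. This is a defect of the statement rather than of your route: the paper invokes the corollary in the proof of Theorem \ref{lowerbound} only to obtain weak convergence in $SBV_{\textrm{loc}}(\mathbb{R}^2)$, and in that application the functions $\rho\chi_{H_n'}$ carry a uniform $L^1$ bound, so Fatou's lemma on each $A_k$ restores $g\in L^1(\mathbb{R}^2)$ and hence genuine membership in $SBV(\mathbb{R}^2)$. To make your write-up match the literal statement, either add a uniform $L^1$ bound to the hypotheses or weaken the conclusion to $g\in SBV_{\textrm{loc}}(\mathbb{R}^2)$; everything else in your proof can stand as written.
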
 
We say that a sequence $g_n$ which belongs to $SBV_{\textrm{loc}}(\mathbb{R}^2)$
weakly converges in $SBV_{\textrm{loc}}(\mathbb{R}^2)$ to a function $g \in SBV_{\textrm{loc}}(\mathbb{R}^2)$, and we write that $g_n \rightharpoonup g$ in $SBV_{\textrm{loc}}(\mathbb{R}^2)$, if $g_n \rightharpoonup g$ in $SBV(A)$ for every
open bounded set $A\subset \mathbb{R}^2$.

 \section*{Acknowledgments}%\settocdepth{part}
 \PPP The author are thankful to the Erwin Schr\"odinger Institute in Vienna, where part of this work was developed during the ESI Joint Mathematics-Physics Symposium ``Modeling of crystalline Interfaces and Thin Film Structures'', and acknowledges the support received from BMBWF through the OeAD-WTZ project HR 08/2020.
P. Piovano  acknowledges support from the Austrian Science Fund (FWF) project P 29681 and from the Vienna Science and Technology Fund (WWTF) together with the City of Vienna and Berndorf Privatstiftung through Project MA16-005. I. Vel\v{c}i\'c  acknowledges support from the Croatian Science Foundation under grant no. IP-2018-01-8904 (Homdirestproptcm).\EEE


\begin{thebibliography}{99}

%\bibitem{Ahlswede}
%R. Ahlswede and S. L. Bezrukov, Edge isoperimetric theorems
%for integer point arrays, {\it Appl. Math. Lett.},  8 (1995), 2:75--80.

%\bibitem{ABC}
%R. Alicandro, A. Braides, and M. Cicalese, Continuum limits of discrete thin films with superlinear growth densities. {\it Calc. Var.
%Partial Differential Equations}, 33 (2008), 267-297.






\bibitem{AFP}
 \textsc{Ambrosio L.},  \textsc{Fusco N.},  \textsc{Pallara D.}, \emph{Functions of bounded variation and free discontinuity
problems}. Oxford Mathematical Monographs, Clarendon Press, New York, 2000.

%\bibitem{Arroyo}
%M. Arroyo and T.  Belytschko.
% An atomistic-based finite deformation membrane for single layer crystalline films,
%{\it J. Mech. Phys. Solids}, 50 (2002), 9:1941--1977. 
\bibitem{AlbDes}
 \textsc{Alberti G.},  \textsc{De Simone A.}, Wetting of rough surfaces: a homogenization approach. \emph{Proc. R. Soc. A}, \textbf{461} (2005), 79--97.


\bibitem{Yeung-et-al12}
 \textsc{Au Yeung Y.},  \textsc{Friesecke G.},  \textsc{Schmidt B.},
Minimizing atomic configurations of short range pair potentials in two dimensions: crystallization in the Wulff-shape,
\emph{Calc. Var. Partial Differ. Equ.}, \textbf{44} (2012), 81--100.

\bibitem{BA} 
 \textsc{Baer E.}, Minimizers of Anisotropic Surface Tensions
Under Gravity: Higher Dimensions
via Symmetrization. \emph{Arch. Rational Mech. Anal.}, \textbf{215} (2015), 531--578. 

 \bibitem{Dodineau-etal} 
 \textsc{Bodineau T.},  \textsc{Ioffe D.},  \textsc{Velenik Y.}, Winterbottom Construction for finite range ferromagnetic models: an $\mathcal{L}^1$-approach. \emph{J. Stat. Phys.},  \textbf{105(1-2)} (2001), 93--131.  



%\bibitem{BL}
%X. Blanc, M. Lewin. The crystallization conjecture: a review, to appear on {\it EMS Surv. Math. Sci} (2015).


%\bibitem{Blanc-LeBris02}
%X. Blanc and C. Le Bris.
%Periodicity of the infinite-volume ground state of a one-dimensional quantum model, {\it Nonlinear Anal.}, 48 (2002), 6:791--803.



%\bibitem{Braides-Cicalese07}
%A. Braides and M. Cicalese.
%Surface energies in nonconvex discrete systems, {\it Math. Models Methods Appl. Sci.}, 17 (2007), 7:985--1037. 

%\bibitem{Braides-et-al07}
%A. Braides,  M. Solci, and E. Vitali.
%A derivation of linear elastic energies from pair-interaction atomistic systems, {\it Netw. Heterog. Media}, 2 (2007), 3:551--567.

%\bibitem{Bollobas}
%B. Bollobas and I. Leader.
%Edge-isoperimetric inequalities in the grid, {\it Combinatorica}, 11 (1991), 4:299--314.


% \bibitem{Brenner}
% D. W. Brenner. Empirical potential for hydrocarbons for use in stimulating the chemical vapor deposition of diamond films, {\it Phys. Rev. B}, 42 (1990), 9458--9471.

%\bibitem{Brenner2}
%D. W. Brenner, O. A. Shenderova, J. A. Harrison, S. J. Stuart, B. Ni, and S. B. Sinnott.
%A second-generation reactive empitical bond order (REBO) potential energy expression for hydrocarbons. {\it J. Phys. Condens. Matter}, 14 (2002), 783--802.



%\bibitem{Cances-LeBris-Maday06} E. Canc\`es, C. Le Bris, Y. Maday. {\it M\'ethodes math\'ematiques en chimie quantique}. Springer, Berlin, 2006.

%\bibitem{Cox-Hill07}
%B. J. Cox and J. M. Hill.
%Exact and approximate geometric parameters for carbon nanotubes incorporating curvature, {\it Carbon}, 45 (2007), 1453--1462.

%\bibitem{Daugherty10}
%S. M. Daugherty. {\it Independent sets and closed-shell independent sets of fullerenes.} Ph.D. Thesis, University of Victoria, 2009.

%\bibitem{Dresselhaus-et-al95}
%M. S. Dresselhaus, G. Dresselhaus, and R. Saito.
%Physics of carbon nanotubes, {\it Carbon}, 33 (1995), 883--891.

%\bibitem{braides-98} A. Braides. Approximation of free-discontinuity problems. Lecture Notes in Mathematics. Springer-Verlag, Berlin, 1998.
%\bibitem{asdmr-17} Almi, Stefano and Dal Maso, Gianni and Toader, Rodica. A lower semicontinuity result for a free discontinuity functional with a boundary term. {\it ournal de Math\'{e}matiques Pures et Appliqu\'{e}es}. 108 (2017), 6: 952--990.  
	
	 \bibitem{CM} 
 \textsc{Caffarelli L.A.},  \textsc{Mellet A.}, Capillary Drops on an Inhomogeneous Surface. \emph{Contemporary Mathematics}. \textbf{446} (2007), 175--201.  

	
	   \bibitem{DP1} 
 \textsc{Davoli E.},  \textsc{Piovano P.}, Derivation of a heteroepitaxial thin-film model. \emph{Interface Free Bound.}, \textbf{22-1} (2020), 1--26.
 
   \bibitem{DP2} 
 \textsc{Davoli E.},  \textsc{Piovano P.}, Analytical validation of the Young-Dupr\'e law for epitaxially-strained thin films. \emph{Math.\ Models Methods Appl.\ Sci.}, \textbf{29-12} (2019), 2183-2223. 
 \FFF
 
       \bibitem{DPS2} 
 \textsc{Davoli E.},  \textsc{Piovano P.},  \textsc{Stefanelli U.}, Wulff shape emergence in graphene. \emph{Math. Models Methods Appl. Sci.}, \textbf{26-12} (2016), 2277-2310. %{\small 0.1142/S0218202516500536}.
 
 
	
    \bibitem{DPS1} 
 \textsc{Davoli E.},  \textsc{Piovano P.},  \textsc{Stefanelli U.}, Sharp $N^{3/4}$ law for the minimizers of the edge-Isoperimetric problem on the triangular lattice. \emph{J. Nonlinear Sci.}, \textbf{27-2} (2017), 627--660. %{\small doi:10.1007/s00332-016-9346-1}.
  
\EEE
 
 
 
 \bibitem{DKS} 
     \textsc{Dobrushin R.L.}, \textsc{Koteck\'y}, \textsc{Schlosman S.}, \emph{Wulff construction: a global shape from local interaction}. AMS translations series \textbf{104}, Providence, 1992.      
     

	
%\bibitem{E-Li09}
%W. E and D. Li. On the crystallization of 2{D} hexagonal lattices,  {\it Comm. Math. Phys.}, 286 (2009), 3:1099--1140.

%\bibitem{Flatley-Theil12} L. Flatley, F. Theil. Face-centered cubic crystallization
%of atomistic configurations, preprint (2013).

%\bibitem{Flatley-Taylor-Tarasov-Theil12} L. Flatley, M. Taylor, A. Tarasov, F. Theil. Packing
%twelve spherical caps to maximize tangencies, {\it J. Comput. Appl. Math.},  254 (2013), 220--225.



\bibitem{Evans} 
\textsc{Evans L.C.},  \textsc{Gariepy R.F.}, \emph{Measure theory and fine properties of functions}. Studies in Advances Mathematics, CRC Press, Boca Raton, 2015.

\bibitem{F}  
\textsc{Fonseca I.}, The Wulff theorem revisited. \emph{Proc. Roy. Soc. London Ser. A }, \textbf{432} (1991), 125--145.
		
\bibitem{FFLM2}
	      \textsc{Fonseca I.}, \textsc{Fusco N.}, \textsc{Leoni G.}, \textsc{Morini M.}, Equilibrium configurations of epitaxially strained crystalline films: existence and regularity results. \emph{Arch. Ration. Mech. Anal.},  \textbf{186} (2007), 477--537.

\bibitem{FM2}  
  \textsc{Fonseca I.},   \textsc{M\"uller S.}, A uniqueness proof for the Wulff problem, {\it Proc. Edinburgh Math. Soc. } \textbf{119A} (1991), 125--136.
\bibitem{FM3} 
  \textsc{Fonseca I.}, \textsc{M\"uller S.}, Relaxation of quasiconvex functionals in {${\rm BV}(\Omega,{\bf R}^p)$} for integrands {$f(x,u,\nabla u)$}, {\it Arch. Rational Mech. Anal.}, \textbf{123} (1993), 1--49.
%\bibitem{Gardner-Radin79}
%C. S. Gardner and C. Radin.
%The infinite-volume ground state of the Lennard-Jones potential, {\it J. Stat. Phys.}, 20 (1979), 6:719--724.

%\bibitem{Grant07} I. P. Grant. {\it Relativistic quantum theory of particles and molecules}, Springer, New York, 2006.

%\bibitem{Grivopoulos09}
%S. Grivopoulos.
%No crystallization to honeycomb or Kagom\'e in free space, {\it J. Phys. A: Math. Theor.}, 42 (2009), 115112.

%\bibitem{Gustafson-Sigal03} S. J. Gustafson, I. M. Sigal. {\it Mathematical concepts of quantum mechanics}, Springer, 2003.

%\bibitem{Harris-et-al11}
%L. Harris, M. Taylor, F. Theil, and A. Tarasov.
%Packing twelve spherical caps to maximize tangencies, 2011.

%\bibitem{Harris11}
%L. C. Harris.
%{\it Face-centered cubic structures in energy-minimizing atomistic configurations}, PhD Thesis, Warwick University, 2011

%\bibitem{Harris-Theil12}
%L. C. Harris and F. Theil.
%Face-centered cubic crystallization of atomistic configurations. In preparation, 2012.

%\bibitem{Hamrick-Radin79} G. C. Hamrick and C. Radin. The symmetry of ground states under perturbation, {\it J. Stat. Phys.}, 21 (1979), 5:601--607.

%\bibitem{Harary}
%F. Harary and H. Harborth, Extremal animals, {\it J. Combin. Inform. and System Sci.},
%1 (1976), 1-8.


%\bibitem{Harborth74bis} H. Harborth. \"Uber Primteiler von Stirlingschen
%Zahlen zweiter Art, {\it Elem. Math.}, 29 (1974), 129--131.

%\bibitem{Harborth74} H. Harborth, { Solution to Problem 664A}, {\it Elem. Math.}, 29 (1974), 14--15.

%\bibitem{OnnovanGaans}
%O. van Gaans. Probability measures on on metric spaces. \url{https://www.math.leidenuniv.nl/~vangaans/jancol1.pdf} 


%\bibitem{Cherednichenko-etal}
% \textsc{Guenneau S.},  \textsc{Craster R.},  \textsc{Antonakakis T.},  \textsc{Cherednichenko K.},  \textsc{Cooper S.}, 
%Homogenization Techniques for Periodic Structures. \emph{Theory and Numeric Applications} AMU (PUP), (2012), 11.1-11.31. 

\FFF
        \bibitem{Heitmann-Radin80}
     \textsc{Heitmann R.},      \textsc{Radin C.}, Ground states for sticky disks, {\it J. Stat. Phys.}, \textbf{22} (1980), 281--287.
       

\bibitem{ISc}
        \textsc{Ioffe D.},  \textsc{Schonmann R.}, Dobrushin-Koteck\'y-Shlosman theory up to the critical temperature. \emph{Comm. Math. Phys.}, \textbf{199}, (1998) 117--167.
    
    \bibitem{srolovitz1}
\textsc{Jiang, W.}, \textsc{Wang, Y.}, \textsc{Zhao, Q. }, \textsc{Srolovitz, D. J.}, \textsc{Bao, W.}, Solid-state dewetting and island morphologies in strongly anisotropic materials, \emph{Scripta Materialia}, \textbf{115} (2016), 123--127. 

\bibitem{srolovitz2}
\textsc{Jiang, W.}, \textsc{Wang, Y.}, \textsc{Zhao, Q. }, \textsc{Srolovitz, D. J.}, \textsc{Bao, W.}, Stable Equilibria of Anisotropic Particles on Substrates: A Generalized Winterbottom Construction,\emph{SIAM Journal on Applied Mathematics} \textbf{77} (6) (2017), 2093--2118. 
\bibitem{kohn1}
\textsc{Kohn, R.V.}, \textsc{Sternberg, P.}, Local minimisers and singular perturbations, \emph{Proceedings of the Royal Society of Edinburgh Section A: Mathematics} \textbf{111}, Issue 1--2 (1989), 69--84.     

\EEE
			\bibitem{KP} 
	     \textsc{Koteck\'y R.},       \textsc{Pfister C.}. Equilibrium shapes of crystals attached to walls, {\it Jour. Stat. Phys.} \textbf{76} (1994), 419--446.

        \bibitem{KrP}
  \textsc{Kreutz L.},  \textsc{Piovano P.}, Microscopic validation of a variational model of epitaxially strained crystalline films, Submitted (2019).
  

%\bibitem{Kamatgalimov-Kovalenko10}
%A. R. Kamatgalimov and V. I. Kovalenko.
%Deformation and thermodynamic instability of a $C_{84}$ fullerene cage, {\it Russ. J. Phys. Chem. A}, 84 (2010), 4L721--726.

 
%\bibitem{Kroto87}
%H. W. Kroto.
%The stability of the fullerenes $D_n$, with $n=24, 28, 32, 36, 50, 60$ and $70$, {\it Nature}, 329 (1987), 529-531.

%\bibitem{LeBris-Lions05} C. Le Bris, P.-L. Lions. From atoms to crystals: a mathematical journey, {\it Bull. AMS}, 42 (2005), 291-363.

%\bibitem{Lin-et-al10}
%F. Lin, E. S\o rensen, C. Kallin, and  J. Berlinsky. $C_{20}$, the smallest fullerene, in {\it Handbook of Nanophysics: Clusters and Fullerenes} (K. D. Sattler, ed.), Taylor \& Francis, CRC Press, 2010.

%\bibitem{Mainini-Stefanelli12}
%E. Mainini and U. Stefanelli. Crystallization in carbon nanostructures, {\it Comm. Math. Phys.}, to appear, 2014. 


\bibitem{Maggi} 
      \textsc{Maggi F.}, \emph{Sets of finite perimeter and geometric variational problems. An introduction to geometric measure theory}. Cambridge studies in advanced mathematics \textbf{135},  Cambridge University Press, 2012.


%\bibitem{MMPS}
%E. Mainini, H. Murakawa, P. Piovano, and U. Stefanelli. A numerical investigation on carbon-nanotube geometries, Preprint IMATI-CNR, 4PV14/0/0 (2014).



   \bibitem{MPSS} 
  \textsc{Mainini E.},  \textsc{Piovano P.},  \textsc{Schmidt B.}, \textsc{Stefanelli U.}, $N^{3/4}$ law in the cubic lattice.  \emph{J. Stat. Phys.}, \textbf{176-6} (2019), 480--1499. % {\small doi:10.1007/s10955-019-02350-z}.

  
  \bibitem{MPS} 
 \textsc{Mainini E.},  \textsc{Piovano P.},  \textsc{Stefanelli U.}, Finite crystallization in the square lattice. \emph{Nonlinearity}, \textbf{27} (2014), 717--737. % {\small doi:10.1088/0951-7715/27/4/717}
 
 

 

\bibitem{MPS2} 
 \textsc{Mainini E.},  \textsc{Piovano P.},  \textsc{Stefanelli U.}, \emph{Crystalline and isoperimetric square configurations}. Proc. Appl. Math. Mech. \textbf{14} (2014), 1045--1048. %{\small doi:10.1002/pamm.201410494}. 
 
   
  \bibitem{MS} 
 \textsc{Mainini E.}, \textsc{Schmidt B.},  Maximal fluctuations around the Wulff shape for edge-isoperimetric sets in $\mathbb{Z}^d$: a sharp scaling law. \FFF \emph{Comm. Math. Phys.},  in press (2020), \EEE {\small https://arxiv.org/pdf/2003.01679.pdf}. 

%\bibitem{Mielke12}
%A. Mielke. Private communication, 2012.

%\bibitem{Mueller93}
%S. M\"uller.
%Singular perturbations as a selection criterion for periodic minimizing sequences,
%{\it Calc. Var. Partial Differential Equations},  1 (1993), 2:169--204.



     \bibitem{PV1}     
      \textsc{Pfister C. E.},   \textsc{Velenik Y.}, Mathematical theory of the wetting phenomenon in the 2D Ising model. \emph{Helv. Phys. Acta}, \textbf{69} (1996), 949--973.

     
\bibitem{PV2}
      \textsc{Pfister C. E.},   \textsc{Velenik Y.}, Large deviations and continuous limit in the 2D Ising model. \emph{Prob. Th. Rel. Fields} \textbf{109} (1997), 435--506.



%\bibitem{Radin81} 
%C. Radin.  The ground state for soft disks, {\it J. Stat. Phys.}, 26 (1981), 2:365--373.

%\bibitem{Radin83}
%C. Radin. Classical ground states in one dimension, {\it J. Stat. Phys.}, 35 (1983), 1-2:109--117.

%\bibitem{Radin86} C. Radin. Crystals and quasicrystals: a continuum model, {\it Comm. Math. Phys.}, 105 (1986), 385--390.

%\bibitem{Radin87}
%C. Radin. Low temperature and the origin of crystalline symmetry. {\it Int. J. Mod. Phys. B}, 1 (1987), 5-6:1157–
%1191.

%\bibitem{Radin-Schulmann83} C. Radin and L. S. Schulmann. Periodicity of classical ground states, {\it Phys. Rev. Lett.}, 51 (1983), 8:621--622. 

\bibitem{Schmidt}
      \textsc{Schmidt B.}, Ground states of the 2D sticky disc model: fine properties and $N^{3/4}$ law for the deviation from the asymptotic Wulff-shape. {\it J. Stat. Phys.}, \textbf{153}  (2013), 727--738.

\bibitem{S2}
	      \textsc{Spencer B.J.}, Asymptotic derivation of the glued-wetting-layer model and the contact-angle condition for Stranski-Krastanow islands. \emph{Phys.  Rev. B}, \textbf{59} (1999), 2011--2017.	 	

\bibitem{spencer1997equilibrium}
\textsc{Spencer B.J.},    \textsc{Tersoff J.}. Equilibrium shapes and properties of epitaxially strained islands. {\em Physical Review Letters}, \textbf{79-(24)} (1997), 4858.

%\bibitem{Ventevogel78}
%W. J. Ventevogel.
%On the configuration of a one-dimensional system of interacting particles with minimum
%potential energy per particle, {\it Phys. A.}, 92 (1978), 3-4:343--361.


%\bibitem{Ventevogel-Nijboer79bis}
%W. J. Ventevogel and B. R. A. Nijboer. 
%On the configuration of systems of interacting particle with minimum
%potential energy per particle, {\it Phys. A.}, 99, (1979), 3:565--580.



%\bibitem{Schein-Friedrich08}
% S. Schein and T. Friedrich. A geometric constraint, the head-to-tail exclusion rule, may be the basis for the isolated-pentagon rule for fullerenes with more than 60 vertices. {\it Proc. Nat. Acad. Sci. U.S.A.}, 105 (2008), 19142--19147.


%\bibitem{Stillinger-Weber85} F. H. Stillinger and T. A. Weber. Computer simulation of local order in condensed phases of silicon, {\it Phys. Rev. B}, 8 (1985), 5262--5271.

      
\bibitem{T}
       \textsc{Taylor J.E.},  Existence and structure of solutions to a class of non elliptic variational problems. \emph{Sympos. Math.}, \textbf{14} (1974), 499--508.	          
        
   
      
\bibitem{T2}
        \textsc{Taylor J.E.}, Unique structure of solutions to a class of non elliptic variational problems.  \emph{Proc. Sympos. Pure Math.}, \textbf{27} (1975), 419--427.	      
          

 
%\bibitem{Tersoff}
%J. Tersoff. New empirical approach for the structure and energy of covalent systems.  {\it Phys. Rev. B}, 37 (1988), 6991--7000.

   
%\bibitem{Theil06}
%F. Theil. A proof of crystallization in two dimensions, {\it Comm. Math. Phys.}, 262 (2006), 1:209--236.


%\bibitem{Theil11}
%F. Theil. Suface energies in a two-dimensional mass-spring model for crystals, {\it  ESAIM Math. Model. Numer. Anal.}, 45 (2011), 873--899.


%\bibitem{Ventevogel-Nijboer79} W. J. Ventevogel and B. R. A. Nijboer. On the configuration of systems of interacting particle with minimum potential energy per particle, {\it Phys. A.}, 98, (1979), 1-2:274--288.


%\bibitem{Wagner83} H. J. Wagner. Crystallinity in Two Dimensions: A Note on a Paper of C. Radin, {\it J. Stat. Phys.},
 %33  (1983), 3:523-526,
 
 	\bibitem{Winterbottom} 
      \textsc{Winterbottom W.L.}, Equilibrium shape of a small particle in contact with a foreign substrate. {\it Acta Metallurgica}, \textbf{15} (1967), 303--310.

 
%\bibitem{Wu}
%J. Wu, K. C. Hwang, Y. Huang. An atomistic-based finite-derformation shell theory for single-wall carbon nanotubes, {\it J. Math. Phys. Solids}, 56 (2008), 279--292.

		
		\bibitem{Wulff01} 
		      \textsc{Wulff G.}, Zur Frage der Geschwindigkeit des Wastums und der Aufl\"osung der Kristallflachen. Krystallographie und Mineralogie. {\it Z. Kristallner.}, \textbf{34} (1901), 449--530.

%\bibitem{Yedder-Blanc-LeBris03} A. B. Yedder, X. Blanc, C. Le Bris.  A numerical investigation of the 2-dimensional crystal problem, preprint (2003).











\end{thebibliography}
\end{document}

%%%%%%%%%%%%%%%%%%%%%%%%%%%%%%%%%%%%%%%%%%%%%%%%%%%%%%%%%%%%%%%%%%%%%%%%%%%%%%%%%%%%%%%%%%%%%%%%%%%%%%%%%%%%%%%%%%%%%%%%%%%%%%%%%%%%%%%%%%%%%%%%%%%%%%%%%%%%%%%%%%%%%%%%%%%%%%%%%%%%%%%%%%%%%%%%%%%%%%